\definecolor{astral}{RGB}{46,116,181}
\newtheorem{theorem}{Theorem}[section]
\newtheorem{lemma}[theorem]{Lemma}
\newtheorem{definition}[theorem]{Definition}
\newtheorem{example}[theorem]{Example}
\newtheorem{remark}[theorem]{Remark}
\newtheorem{thm}{Theorem}[section]
\newtheorem{ex}[thm]{Example}
\definecolor{darkslategray}{rgb}{0.18, 0.31, 0.31}
\definecolor{warmblack}{rgb}{0.0, 0.26, 0.26}
\journal{arXiv.org}
\newcommand{\nl}{\newline}
\newcommand{\R}{{\mathbb R}}
\newcommand{\C}{{\mathbb C}}
\newcommand{\mc}[1]{\mathcal {#1}}
\newcommand{\dg}{{\dagger}}
\newcommand{\n}{{*_N}}
\newcommand{\1}{{*_1}}
\newcommand{\m}{{*_M}}
\begin{document}

\begin{frontmatter}

\title{ \textcolor{warmblack}{\bf Reverse order law for the Moore-Penrose  inverses of
tensors }}

\author{Krushnachandra Panigrahy$^\dag$$^a$, Ratikanta Behera$^*$, and Debasisha Mishra$^\dag$$^b$}

\address{$^{*}$
Department of Mathematics and Statistics,\\
Indian Institute of Science Education and Research Kolkata,\\
 Nadia, West Bengal, India.\\
\textit{E-mail}: \texttt{ratikanta@iiserkol.ac.in}

                \vspace{.3cm}

                       $^{\dag}$ Department of Mathematics,\\
                        National Institute of Technology Raipur,\\
                        Raipur, Chhattisgarh, India.
                        \\\textit{E-mail$^a$}: \texttt{kcp.224\symbol{'100}gmail.com }
                        \\\textit{E-mail$^b$}: \texttt{dmishra\symbol{'100}nitrr.ac.in. }}

\begin{abstract}
\textcolor{warmblack}{Reverse order law for the Moore-Penrose
inverses of  tensors are  useful in the field of multilinear
algebra. In this paper, we first prove some more identities
involving the  Moore-Penrose inverse of tensors. We then obtain a
few necessary and sufficient conditions for the reverse order law
for the Moore-Penrose inverse of tensors via the Einstein product. }
\end{abstract}

\begin{keyword}
Moore-Penrose inverse \sep Tensor \sep Matrix \sep Einstein product.
\end{keyword}

\end{frontmatter}

\section{Introduction}
\label{sec1}

Higher-order generalizations of vectors and matrices are referred to
tensors, and have attracted tremendous interest in recent years
\cite{kolda,  Marcar, loan}. Let $\mathbb{C}^{I_1\times\cdots\times
I_N}$  be the set of  order $N$ and dimension $I_1 \times \cdots
\times I_N$ tensors over the complex field $\mathbb{C}$.  $\mc{A}
\in \mathbb{C}^{I_1\times\cdots\times I_N}$ is a multiway array with
$N$-th order tensor and $I_1, I_2, \cdots, I_N$ are dimensions of
the first, second,  $\cdots$ , $N$th way,
 respectively. Each entry of $\mc{A}$ is denoted by $a_{i_1...i_N}$.
The Einstein product \cite{ein} $ \mc{A}\n\mc{B} \in
\mathbb{C}^{I_1\times\cdots\times I_N \times J_1 \times\cdots\times
J_M }$ of tensors $\mc{A} \in \mathbb{C}^{I_1\times\cdots\times I_N
\times K_1 \times\cdots\times K_N }$ and $\mc{B} \in
\mathbb{C}^{K_1\times\cdots\times K_N \times J_1 \times\cdots\times
J_M }$   is defined by the operation $\n$ via
\begin{equation*}\label{Eins}
(\mc{A}\n\mc{B})_{i_1...i_Nj_1...j_M}
=\displaystyle\sum_{k_1...k_N}a_{{i_1...i_N}{k_1...k_N}}b_{{k_1...k_N}{j_1...j_M}},
\end{equation*}
The associative law of this tensor product holds. In the above
formula, if $\mc{B} \in \mathbb{C}^{K_1\times\cdots\times K_N}$,
then $\mc{A}\n\mc{B} \in \mathbb{C}^{I_1\times\cdots\times I_N}$ and
\begin{equation*}
(\mc{A}\n\mc{B})_{i_1...i_N} = \displaystyle\sum_{k_1...k_N}
a_{{i_1...i_N}{k_1...k_N}}b_{{k_1...k_N}}.
\end{equation*}
This product is  used in the study of the theory of relativity
\cite{ein} and in the area of continuum mechanics \cite{lai}. The
Einstein product $\1$  reduces to the standard matrix multiplication
as
$$(A\1B)_{ij}= \displaystyle\sum_{k=1}^{n} a_{ik}b_{kj},$$
for $A\in {\R}^{m \times n}$ and $B\in {\R}^{n \times l}$.

Brazell {\it et al.} \cite{BraliNT13} introduced the notion of the
ordinary tensor inverse,  as follows. A tensor $\mc{X} \in
\mathbb{C}^{I_1\times\cdots\times I_N \times I_1 \times\cdots\times
I_N}$ is called  the  {\it inverse} of $\mc{A}\in
\mathbb{C}^{I_1\times\cdots\times I_N \times I_1 \times\cdots\times
I_N}$  if it satisfies $\mc{A}\n\mc{X}=\mc{X}\n\mc{A}=\mc{I}$. It is
denoted  by $\mc{A}^{-1}$. If  $\mc{A}\in
\mathbb{C}^{I_1\times\cdots\times I_N \times I_1 \times\cdots\times
I_N }$ and $\mc{B}\in \mathbb{C}^{I_1\times\cdots\times I_N \times
I_1 \times\cdots\times I_N }$ are a pair of invertible tensors such
that their Einstein product $\mc{A}\n\mc{B}$ is also invertible,
then the reverse-order law for invertible  tensors $A$ and $B$ is
$$(\mc{A}\n\mc{B})^{-1} = \mc{B}^{-1}\n\mc{A}^{-1}.$$

This paper is concerned with the  reverse order law for the
Moore-Penrose inverses of  tensors via the Einstein product. Before
moving into the same, let us  recall the definition of the
Moore-Penrose inverse of a tensor which was introduced in
\cite{sun}, very recently.

\begin{definition}(Definition 2.2, \cite{sun})\label{defmpi}\\
Let $\mc{A} \in \mathbb{C}^{I_1\times\cdots\times I_N \times J_1
\times ... \times J_N}$. The tensor $\mc{X} \in
\mathbb{C}^{J_1\times\cdots\times J_N \times I_1 \times\cdots\times
I_N}$ satisfying the following four tensor equations:

\begin{enumerate}
\item[(1)] $\mc{A}\n\mc{X}\n\mc{A} = \mc{A};$
\item[(2)] $\mc{X}\n\mc{A}\n\mc{X} = \mc{X};$
\item[(3)] $(\mc{A}\n\mc{X})^* = \mc{A}\n\mc{X};$
\item[(4)] $(\mc{X}\n\mc{A})^* = \mc{X}\n\mc{A},$
\end{enumerate}
is called  the \textbf{Moore-Penrose inverse} of $\mc{A}$, and is
denoted by $\mc{A}^{\dg}$.
\end{definition}

Thereafter, the authors of \cite{bm,  weit2} further introduced
different generalized inverses of tensors   via the Einstein product
and   added a few more results to the same theory. But Jin {\it et
al.} \cite{bai} introduced  the Moore-Penrose inverse of  a tensor
using $t-$product.  However, the so-called reverse order law is not
necessarily true for any kind of generalized inverses. In
particular, Behera and Mishra \cite{bm} provided a characterization
of the reverse order law for $\{ 1 \}$-inverse of tensors (see
Theorem 2.16, \cite{bm}), and    obtained an example which shows
that the reverse order law for the Moore-Penrose inverses  of
tensors is not true in general (see Example 2.4, \cite{bm}). At the
last, they proposed the following open question:

 \vspace{-0.5cm}

$$\mbox{{\bf Question 1.} When does}~~  (\mc{A}\n\mc{B})^{\dg} = \mc{B}^{\dg} \n   \mc{A}^{\dg}  ?$$

 This is also called as {\it two term reverse order law}. The reverse order law for the Moore-Penrose inverses of a  tensor product yields a class of interesting problems that are fundamental in the theory of generalized inverses of tensors.
 The notion of the reverse order law for the Moore-Penrose inverses of  matrices has a long history.  Greville \cite{gre} studied first the above problem  but in the setting of rectangular matrices. Baskett and Katz \cite{bask} then discussed the same theory for $EP_r$ matrices where $A\in {\C}^{n\times n}$ of rank $r$ is called $EP_r$, if $A$ and $A^*$, the
conjugate transpose of $A$, have the same null spaces.  The reverse
order law was also studied
 for  other generalized inverses of matrices (see \cite{bar}, \cite{cao}, \cite{weir}, \cite{tian}  and references there
in). It was later carried forward by
  Bouldin \cite{boul} to  bounded linear operators with  closed range spaces. Hartwig \cite{har}
  provided necessary and sufficient conditions for holding of triple (or {\it three term)} reverse order law (i.e., $(ABC)^{\dg}=C^{\dg}B^{\dg}A^{\dg}$ where $A$, $B$ and $C$ are matrices).
  The study of this problem for generalized inverses in C*-algebras can be seen in the work by
  Cvetkovi´c-Ili´c and  Hartee \cite{cve} and Mosic and Djordjevic \cite{mosic}.
  While Deng \cite{deng} studied the same problem for the group invertible operators,
  Wang {\it et al.} \cite{wang} considered  for the Drazin invertible operators.
    The vast work on the reverse order law and its several multivariety extensions in different areas of mathematics in the literature  and the recent works in \cite{sun} and \cite{bm} motivate us
to study this problem in the framework of tensors.

 The main objective of this paper is to answer the above question and to do this, the paper is outlined as follows. In the next Section, we discuss some notations and definitions which are helpful in proving the main results.  Section 3 discusses the main results and  has two parts. In the first part,  we obtain several identities involving the Moore-Penrose inverse of tensors and the trace of a tensor. The second part contains
  a few necessary and sufficient conditions of the  reverse order law for the Moore-Penrose inverses of  tensors via the Einstein product. 

\section{Preliminaries}
For convenience, we first briefly explain some of the terminologies
which will be used here on wards.  We  refer to ${\R}^{m \times n}$
as the  set of all real ${m \times n}$ matrices, where $\R$ denotes
the set of real scalars. We denote
$\mathbb{R}^{I_1\times\cdots\times I_N}$  as the set of order $N$
real tensors. Indeed, a matrix is a second order tensor and a vector
is a first order tensor. Note that throughout the paper, tensors are
represented in calligraphic letters like  $\mc{A}$, and the notation
$(\mc{A})_{i_1...i_N}= a_{i_1...i_N}$ represents the scalars.

For a tensor $\mc{A}=(a_{{i_1}...{i_N}{j_1}...{j_M}})
 \in \mathbb{C}^{I_1\times\cdots\times I_N \times J_1 \times\cdots\times J_M},\text{let}~\mc{B}
 =(b_{{i_1}...{i_M}{j_1}...{j_N}})~~ \in $  \nl $
   \mathbb{C}^{J_1\times\cdots\times J_M
\times I_1 \times\cdots\times I_N}$ be the conjugate transpose of
$\mc{A}$, where $b_{{i_1}...{i_M}{j_1}...{j_N}}
=\overline{a}_{{j_1}...{j_M}{i_1}...{i_N}}$. The tensor $\mc{B}$ is
denoted by $\mc{A}^*$.
 When $b_{{i_1}...{i_M}{j_1}...{j_N}} ={a}_{{j_1}...{j_M}{i_1}...{i_N}}$, $\mc{B}$
 is the {\it transpose} of $\mc{A}$, and is denoted by $\mc{A}^T$. Further, a tensor $\mc{O}$ denotes the {\it zero tensor} if  all the entries are zero.
 A tensor $\mc{A}\in
\mathbb{C}^{I_1\times\cdots\times I_N \times I_1 \times\cdots\times
I_N}$ is {\it Hermitian}  if  $\mc{A}=\mc{A}^*$ and {\it
skew-Hermitian} if $\mc{A}= - \mc{A}^*$. Further, a tensor
$\mc{A}\in \mathbb{C}^{I_1\times\cdots\times I_N \times I_1
\times\cdots\times I_N}$  is {\it unitary}  if  $\mc{A}\n
\mc{A}^*=\mc{A}^*\n \mc{A}=\mc{I}$, and {\it idempotent}  if $\mc{A}
\n \mc{A}= \mc{A}.$ In the case of tensors of real entries,
Hermitian, skew-Hermitian and unitary tensors are called {\it
symmetric} (see Definition 3.16, \cite{BraliNT13}), {\it
skew-symmetric} and {\it orthogonal} (see Definition 3.15,
\cite{BraliNT13}) tensors,  respectively. The definition of a
diagonal tensor is borrowed from \cite{sun}, and is obtained by
generalizing Definition 3.12, \cite{BraliNT13}.
\begin{definition} (\cite{sun})\\
A tensor $(\mc{D})_{{i_1}...{i_N}{j_1}...{j_N}}$
  is
   called a {\it diagonal
   tensor} if $d_{{i_1}...{i_N}{j_1}...{j_N}} = 0$ if $(i_1,\cdots,i_N) \neq (j_1,\cdots,j_N).$
\end{definition}

We  recall the definition of an identity  tensor below.

\begin{definition} (Definition 3.13, \cite{BraliNT13}) \\
A tensor
  with entries
     $ (\mc{I})_{i_1i_2 \cdots i_Nj_1j_2\cdots j_N} = \prod_{k=1}^{N} \delta_{i_k j_k}$,
   where
\begin{numcases}
{\delta_{i_kj_k}=}
  1, &  $i_k = j_k$,\nonumber
  \\
  0, & $i_k \neq j_k $.\nonumber
\end{numcases}
 is  called a {\it  unit tensor or identity tensor}.
\end{definition}

We next present the definition of the trace of a tensor which was
introduced earlier in \cite{sun}.

\begin{definition}\label{Tracedefn} (\cite{sun})\\  The trace of a tensor $(\mc{A})_{{i_1}...{i_N}{j_1}...{j_N}}$
  is defined as the sum of the diagonal entries, that is
\begin{equation*}
tr(\mc{A}) = \displaystyle\sum_{i_1 \cdots
i_N}a_{{i_1...i_N}{i_1...i_N}}.
\end{equation*}
\end{definition}

It is denoted by $tr(\mc{A})$. Now, we recall  the singular value
decomposition (SVD) of a tensor which was first introduced in
\cite{BraliNT13} for real tensors, and was then for  a complex
tensors in \cite{sun}.

\begin{lemma}{(Lemma 3.1, \cite{sun})}\\
 A tensor $\mc{A} \in
\mathbb{C}^{I_1\times\cdots\times I_N \times J_1 \times\cdots\times
J_N}$
 can be decomposed  as $$\mc{A} = \mc{U}\n\mc{B}\n\mc{V}^*,$$
 where $\mc{U} \in \mathbb{C}^{I_1\times\cdots\times I_N \times I_1 \times\cdots\times I_N}$ and
 $\mc{V} \in  \mathbb{C}^{J_1\times\cdots\times J_N \times J_1 \times\cdots\times J_N}$ are unitary
 tensors, and
 $\mc{B} \in \mathbb{C}^{I_1\times\cdots\times I_N \times J_1 \times\cdots\times J_N}$ is a
 tensor such that
 $(\mc{B})_{i_1...i_Nj_1...j_N} =0$, if $(i_1,\cdots,i_N) \neq (j_1,\cdots,j_N).$
 \end{lemma}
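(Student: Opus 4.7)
The natural plan is to reduce the tensor SVD to the classical matrix SVD via the standard \emph{unfolding} isomorphism that turns Einstein product into ordinary matrix multiplication. Concretely, I would fix bijections $\mu:[I_1]\times\cdots\times[I_N]\to[I_1I_2\cdots I_N]$ and $\nu:[J_1]\times\cdots\times[J_N]\to[J_1J_2\cdots J_N]$ (for instance, the lexicographic ordering) and define a map $\phi$ sending $\mc{A}\in\mathbb{C}^{I_1\times\cdots\times I_N\times J_1\times\cdots\times J_N}$ to the matrix $\phi(\mc{A})\in\mathbb{C}^{(I_1\cdots I_N)\times(J_1\cdots J_N)}$ with entries $\phi(\mc{A})_{\mu(i_1,\ldots,i_N),\,\nu(j_1,\ldots,j_N)}=a_{i_1\ldots i_N j_1\ldots j_N}$. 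The first step is to check that $\phi$ is a linear bijection that respects the relevant algebraic structure, namely $\phi(\mc{A}\n\mc{C})=\phi(\mc{A})\phi(\mc{C})$ whenever the product is defined (this is exactly the content of the Einstein sum $\sum_{k_1\ldots k_N}$ once one notes that it runs over the range of $\mu$ on the inner block), $\phi(\mc{A}^*)=\phi(\mc{A})^*$, and $\phi(\mc{I})=I$, so that unitary tensors correspond to unitary matrices.

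Granted these routine compatibilities, the second step is to apply the classical matrix SVD to $\phi(\mc{A})$, obtaining $\phi(\mc{A})=UDV^*$ with $U\in\mathbb{C}^{(I_1\cdots I_N)\times(I_1\cdots I_N)}$ and $V\in\mathbb{C}^{(J_1\cdots J_N)\times(J_1\cdots J_N)}$ unitary and $D$ a (rectangular) diagonal matrix. Now define the tensors $\mc{U}=\phi_I^{-1}(U)$, $\mc{B}=\phi^{-1}(D)$, and $\mc{V}=\phi_J^{-1}(V)$, where $\phi_I,\phi_J$ denote the analogous unfoldings of the square tensor spaces on which $\mc{U}$ and $\mc{V}$ live. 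Applying $\phi^{-1}$ to the identity $\phi(\mc{A})=UDV^*$ and using the multiplicativity established in step one yields $\mc{A}=\mc{U}\n\mc{B}\n\mc{V}^*$, and the unitarity of $\mc{U},\mc{V}$ follows from the unitarity of $U,V$ together with $\phi(\mc{U}\n\mc{U}^*)=UU^*=I=\phi(\mc{I})$ and the injectivity of $\phi$.

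The last step is to check the diagonal shape of $\mc{B}$. Writing out $D_{pq}=0$ for $p\neq q$ and pulling back through $\phi$, one gets that $b_{i_1\ldots i_N j_1\ldots j_N}=0$ whenever $\mu(i_1,\ldots,i_N)\neq\nu(j_1,\ldots,j_N)$. To match the statement exactly, I would choose the bijections $\mu$ and $\nu$ to agree on the common initial segment $\prod_k [\min(I_k,J_k)]$ (again, the lexicographic order does this automatically), so that $\mu(i_1,\ldots,i_N)=\nu(j_1,\ldots,j_N)$ forces $(i_1,\ldots,i_N)=(j_1,\ldots,j_N)$. This gives precisely the condition in the lemma.

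The main obstacle is purely bookkeeping: one must choose the unfolding consistently across the three factors and verify, via the defining sum $(\mc{A}\n\mc{C})_{i_1\ldots i_N j_1\ldots j_M}=\sum_{k_1\ldots k_N}a_{i_1\ldots i_N k_1\ldots k_N}c_{k_1\ldots k_N j_1\ldots j_M}$, that $\phi$ really is a $*$-algebra isomorphism onto the matrix algebra; once that is in hand, the SVD itself is a free invocation of the matrix result. No deeper idea is required because the Einstein product has been rigged precisely so that this isomorphism exists.
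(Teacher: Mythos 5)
Your overall strategy --- unfold, apply the matrix SVD, pull back --- is the standard one, and it is exactly how the cited source establishes this lemma (the present paper offers no proof of its own; it simply imports the statement from \cite{sun}, which in turn follows \cite{BraliNT13}). The first two steps of your plan are sound: once a bijection is fixed for each index block and used consistently, the unfolding $\phi$ is a linear bijection that is multiplicative for the Einstein product, sends conjugate transposes to conjugate transposes and the identity tensor to the identity matrix, so $\mc{A}=\mc{U}\n\mc{B}\n\mc{V}^*$ with $\mc{B}=\phi^{-1}(D)$ and $\mc{U},\mc{V}$ unitary follows at once from the matrix SVD.

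The gap is in your last step. The parenthetical claim that lexicographic orderings ``automatically'' give $\mu(i_1,\ldots,i_N)=\nu(j_1,\ldots,j_N)\Rightarrow(i_1,\ldots,i_N)=(j_1,\ldots,j_N)$ is false as soon as the two dimension tuples differ: already for $N=2$, $I=(2,2)$, $J=(3,3)$ one has $\mu(2,1)=3=\nu(1,3)$. Worse, when neither tuple dominates the other coordinatewise, no choice of bijections can work: the required implication would inject $\{1,\ldots,\min(\prod_kI_k,\prod_kJ_k)\}$ into $\prod_k\{1,\ldots,\min(I_k,J_k)\}$, and $\prod_k\min(I_k,J_k)$ is in general strictly smaller than $\min(\prod_kI_k,\prod_kJ_k)$. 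In fact the entrywise diagonal condition is then unattainable by any argument: for $\mc{A}\in\mathbb{C}^{1\times2\times2\times1}$ the only entry of $\mc{B}$ permitted to be nonzero is $b_{1111}$, so $\mc{U}\n\mc{B}\n\mc{V}^*$ unfolds to a matrix of rank at most one, while $\mc{A}$ may unfold to a rank-two matrix. So your proof delivers the decomposition with $\mc{B}$ equal to the pullback of a rectangular diagonal matrix, but not the literal condition $b_{i_1\ldots i_Nj_1\ldots j_N}=0$ whenever $(i_1,\ldots,i_N)\neq(j_1,\ldots,j_N)$. To recover the latter you must either assume $I_k=J_k$ for all $k$ (take $\mu=\nu$), or, when one tuple coordinatewise dominates the other, choose $\mu$ and $\nu$ to enumerate $\prod_k\{1,\ldots,\min(I_k,J_k)\}$ first and in the same order --- which the lexicographic order does not do. The residual failure in the remaining cases is a defect of the statement as quoted rather than of your reduction, but the specific claim you lean on to finish is the one place the writeup is actually wrong.
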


 Existence and uniqueness of
$\mc{A}^{\dg} $ is shown in Theorem 3.2, \cite{sun}. The authors of
\cite{sun} also showed that $\mc{A}^{\dg} =
\mc{V}\n\mc{B}^{\dg}\n\mc{U}^*$ in the proof of Theorem 3.2,
\cite{sun}. A few properties of $\mc{A}^{\dg}$ are
$(\mc{A}^{\dg})^{\dg}=\mc{A}$ and
$(\mc{A}^{*})^{\dg}=(\mc{A}^{\dg})^{*}$. We conclude this section
with the following lemma  which will be  used in further sections.

\begin{lemma}({Lemma 2.3 $\&$ Lemma 2.6, \cite{bm}})\\\label{revA}
Let $\mc{A}\in \mathbb{C}^{I_1\times\cdots\times I_N \times J_1
\times\cdots\times J_N }$. Then
\begin{enumerate}
\item[(a)] $\mc{A}^* = \mc{A}^{\dg} \n \mc{A} \n \mc{A}^*=\mc{A}^* \n \mc{A} \n \mc{A}^{\dg};$
\item[(b)] $\mc{A} = \mc{A} \n \mc{A}^* \n (\mc{A}^*)^{\dg} = (\mc{A}^*)^{\dg} \n \mc{A}^* \n\mc{A};$
\item[(c)] $\mc{A}^{\dg} = ({\mc{A}^*}\n{\mc{A})^{\dg}}\n\mc{A}^* = {\mc{A}^*}\n(\mc{A}\n\mc{A}^*)^{\dg}.$
\end{enumerate}
 \end{lemma}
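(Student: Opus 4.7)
The plan is to derive (a) and (b) directly from the four Moore--Penrose equations of Definition~\ref{defmpi}, together with the identity $(\mc{A}^*)^{\dg}=(\mc{A}^{\dg})^*$ recalled just above, and to obtain (c) by checking those same four equations for a carefully chosen candidate, using the SVD already recalled in this section to handle the one step that is not purely symbolic.

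For (a), I would begin from condition (4), which says that $\mc{A}^{\dg}\n\mc{A}$ is Hermitian. Right-multiplying the identity $\mc{A}^{\dg}\n\mc{A}=(\mc{A}^{\dg}\n\mc{A})^*$ by $\mc{A}^*$ and then using condition (1) produces
\[
\mc{A}^{\dg}\n\mc{A}\n\mc{A}^* \;=\; (\mc{A}^{\dg}\n\mc{A})^*\n\mc{A}^* \;=\; (\mc{A}\n\mc{A}^{\dg}\n\mc{A})^* \;=\; \mc{A}^*.
\]
The companion identity $\mc{A}^*=\mc{A}^*\n\mc{A}\n\mc{A}^{\dg}$ follows symmetrically, starting from the Hermiticity of $\mc{A}\n\mc{A}^{\dg}$ given by condition (3). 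For (b), the same pattern applies: rewrite $(\mc{A}^*)^{\dg}$ as $(\mc{A}^{\dg})^*$ and compute
\[
\mc{A}\n\mc{A}^*\n(\mc{A}^*)^{\dg} \;=\; \mc{A}\n(\mc{A}^{\dg}\n\mc{A})^* \;=\; \mc{A}\n\mc{A}^{\dg}\n\mc{A} \;=\; \mc{A},
\]
using condition (4) in the middle and condition (1) at the end; the other equality in (b) is symmetric.

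Part (c) is where the main obstacle lies, because $(\mc{A}^*\n\mc{A})^{\dg}$ cannot be reduced to $\mc{A}^{\dg}$ by the four equations alone. I would set $\mc{X}:=(\mc{A}^*\n\mc{A})^{\dg}\n\mc{A}^*$ and verify $\mc{X}=\mc{A}^{\dg}$ by checking the four conditions of Definition~\ref{defmpi} for $\mc{X}$. Since $\mc{A}^*\n\mc{A}$ is Hermitian, so is $(\mc{A}^*\n\mc{A})^{\dg}$, and this delivers the symmetry conditions (3) and (4) for $\mc{X}$ at once. The substantive step is condition (1), which unwinds to
\[
\mc{A}\n(\mc{A}^*\n\mc{A})^{\dg}\n\mc{A}^*\n\mc{A} \;=\; \mc{A}.
\]
For this I would substitute the SVD $\mc{A}=\mc{U}\n\mc{B}\n\mc{V}^*$ recalled in the preceding lemma, so that $\mc{A}^*\n\mc{A}=\mc{V}\n\mc{B}^*\n\mc{B}\n\mc{V}^*$ and $(\mc{A}^*\n\mc{A})^{\dg}=\mc{V}\n(\mc{B}^*\n\mc{B})^{\dg}\n\mc{V}^*$. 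The unitary factors telescope via $\mc{V}^*\n\mc{V}=\mc{I}$, and the claim collapses to its diagonal analogue $\mc{B}\n(\mc{B}^*\n\mc{B})^{\dg}\n\mc{B}^*\n\mc{B}=\mc{B}$, which is immediate from the entrywise description of the Moore--Penrose inverse of a diagonal tensor. Condition (2) for $\mc{X}$ then follows from (1) together with the Hermiticity already established. Finally, the second equality $\mc{A}^{\dg}=\mc{A}^*\n(\mc{A}\n\mc{A}^*)^{\dg}$ drops out by applying the first equality with $\mc{A}^*$ in place of $\mc{A}$, using $(\mc{A}^*)^{\dg}=(\mc{A}^{\dg})^*$, and then taking conjugate transpose.
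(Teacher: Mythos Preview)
The paper does not supply its own proof of this lemma: it is quoted verbatim from \cite{bm} (Lemma~2.3 and Lemma~2.6 there), so there is no in-paper argument to compare against. Your derivations of (a) and (b) from the four defining equations and the identity $(\mc{A}^*)^{\dg}=(\mc{A}^{\dg})^*$ are correct and standard. For (c), your verification that $\mc{X}=(\mc{A}^*\n\mc{A})^{\dg}\n\mc{A}^*$ satisfies the Moore--Penrose equations is also sound; the one genuinely nontrivial step, $\mc{A}\n(\mc{A}^*\n\mc{A})^{\dg}\n\mc{A}^*\n\mc{A}=\mc{A}$, is exactly where some structural input is needed, and reducing it to the diagonal case via the SVD already recalled in this section is the natural move. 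Indeed, the paper itself remarks (just before Theorem~\ref{theo11}) that the corresponding result in \cite{bm} was proved ``using SVD'', so your approach is in line with the cited source. One minor point: your claim that condition~(2) for $\mc{X}$ ``follows from (1) together with the Hermiticity'' is a bit loose; it is cleaner to note directly that $\mc{X}\n\mc{A}\n\mc{X}=(\mc{A}^*\n\mc{A})^{\dg}\n(\mc{A}^*\n\mc{A})\n(\mc{A}^*\n\mc{A})^{\dg}\n\mc{A}^*=(\mc{A}^*\n\mc{A})^{\dg}\n\mc{A}^*=\mc{X}$ by the second Moore--Penrose equation for $(\mc{A}^*\n\mc{A})^{\dg}$.
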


\section{Main Results}
In this section, we prove some results concerning the reverse order
law for the Moore-Penrose inverses of  tensors. This section is of
two-fold. Firstly, we obtain some more identities of   the
Moore-Penrose inverses of tensors. Secondly, we present the reverse
order law.

\subsection{Some identities}
\label{subsec1}

When investigating on the reverse order law, we find some
interesting identities. Some of these are used in the next
subsection. The first  part of  the very first result was proved
earlier (see Lemma 2.3 (a), \cite{bm}) using SVD. Here, we have
provided another proof without using SVD.

\begin{theorem}\label{theo11}
Let $\mc{A}\in \mathbb{C}^{I_1\times\cdots\times I_N \times J_1
\times\cdots\times J_N }$. Then,
\begin{itemize}
\item[(a)] $(\mc{A}^*\n\mc{A})^{\dg} = \mc{A}^{\dg} \n(\mc{A}^*)^{\dg}$;
\item[(b)] $(\mc{A}\n\mc{A}^*)^{\dg} = (\mc{A}^*)^{\dg} \n\mc{A}^{\dg}$.
\end{itemize}
\end{theorem}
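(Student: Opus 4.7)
The plan is to establish (a) by directly verifying the four Moore--Penrose axioms of Definition \ref{defmpi}: I would set $\mc{X} := \mc{A}^{\dg}\n(\mc{A}^*)^{\dg}$ and show that $\mc{X}$ is a $\{1,2,3,4\}$-inverse of $\mc{A}^*\n\mc{A}$. Uniqueness of the Moore--Penrose inverse (Theorem 3.2 of \cite{sun}) then forces $\mc{X}=(\mc{A}^*\n\mc{A})^{\dg}$, giving (a). Part (b) will follow by replacing $\mc{A}$ with $\mc{A}^*$ in (a) and using the identities $(\mc{A}^*)^{\dg}=(\mc{A}^{\dg})^*$ and $(\mc{A}^*)^{*}=\mc{A}$ recalled right before Lemma \ref{revA}.

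For the verification in (a), the two tools I would rely on are, first, the Hermitian conditions (3) and (4) of Definition \ref{defmpi}, which, after taking conjugate transposes, read $(\mc{A}^{\dg})^*\n\mc{A}^* = \mc{A}\n\mc{A}^{\dg}$ and $\mc{A}^*\n(\mc{A}^{\dg})^* = \mc{A}^{\dg}\n\mc{A}$; and second, Lemma \ref{revA}(a), which supplies $\mc{A}^{\dg}\n\mc{A}\n\mc{A}^* = \mc{A}^*$. Applying these to $\mc{X}=\mc{A}^{\dg}\n(\mc{A}^{\dg})^*$, I would first compute
\[
(\mc{A}^*\n\mc{A})\n\mc{X} \;=\; \mc{A}^*\n\mc{A}\n\mc{A}^{\dg}\n(\mc{A}^{\dg})^{*} \;=\; \mc{A}^*\n(\mc{A}^{\dg})^{*} \;=\; \mc{A}^{\dg}\n\mc{A},
\]
using condition (1) of Definition \ref{defmpi} for $\mc{A}$ in the middle step. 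Analogously,
\[
\mc{X}\n(\mc{A}^*\n\mc{A}) \;=\; \mc{A}^{\dg}\n(\mc{A}^{\dg})^{*}\n\mc{A}^*\n\mc{A} \;=\; \mc{A}^{\dg}\n\mc{A}\n\mc{A}^{\dg}\n\mc{A} \;=\; \mc{A}^{\dg}\n\mc{A}.
\]

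With those two equal to the Hermitian tensor $\mc{A}^{\dg}\n\mc{A}$, the symmetry conditions (3) and (4) for $\mc{X}$ and $\mc{A}^*\n\mc{A}$ follow at once. Condition (1) then amounts to $(\mc{A}^{\dg}\n\mc{A})\n(\mc{A}^*\n\mc{A}) = \mc{A}^*\n\mc{A}$, which is a direct restatement of Lemma \ref{revA}(a); and condition (2) amounts to $\mc{X}\n(\mc{A}^{\dg}\n\mc{A})=\mc{X}$, which collapses to $\mc{X}$ after one application of condition (2) of Definition \ref{defmpi} for $\mc{A}$.

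The main obstacle is really just bookkeeping: the proof hinges on recognizing that the Hermitian relations $(\mc{A}\n\mc{A}^{\dg})^{*}=\mc{A}\n\mc{A}^{\dg}$ and $(\mc{A}^{\dg}\n\mc{A})^{*}=\mc{A}^{\dg}\n\mc{A}$ transform, via conjugate transposition, into rewriting rules that convert the mixed product $(\mc{A}^{\dg})^{*}\n\mc{A}^*$ (respectively $\mc{A}^*\n(\mc{A}^{\dg})^{*}$) into $\mc{A}\n\mc{A}^{\dg}$ (respectively $\mc{A}^{\dg}\n\mc{A}$). Once this identification is made, the four Moore--Penrose checks each reduce to one or two substitutions, with no SVD and no further structural tools required.
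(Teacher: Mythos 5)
Your proposal is correct and follows essentially the same route as the paper: both verify that $\mc{A}^{\dg}\n(\mc{A}^*)^{\dg}$ satisfies the four Penrose equations of Definition \ref{defmpi} for $\mc{A}^*\n\mc{A}$ and invoke uniqueness, then obtain (b) by substituting $\mc{A}^*$ for $\mc{A}$. The only difference is that you spell out the intermediate reductions (via Lemma \ref{revA}(a) and the Hermitian conditions) that the paper leaves implicit.
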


\begin{proof}$(a)$ By using Definition \ref{defmpi}, we get
$(\mc{A}^*\n\mc{A})\n(\mc{A}^{\dg}\n(\mc{A}^*)^{\dg}) \n (\mc{A}^*\n\mc{A}) =
 \mc{A}^*\n\mc{A}$, $(\mc{A}^{\dg}\n (\mc{A}^*)^{\dg})\n(\mc{A}^*\n\mc{A})\n
 (\mc{A}^{\dg}\n(\mc{A}^*)^{\dg}) = \mc{A}^{\dg}\n(\mc{A}^*)^{\dg}$,
  $ (\mc{A}^*\n\mc{A}\n\mc{A}^{\dg}\n(\mc{A}^*)^{\dg})^* =
   \mc{A}^*\n\mc{A}\n\mc{A}^{\dg}\n(\mc{A}^*)^{\dg}$, and
   $ (\mc{A}^{\dg}\n(\mc{A}^*)^{\dg}\n\mc{A}^*\n\mc{A})^* =
   \mc{A}^{\dg}\n(\mc{A}^*)^{\dg}\n\mc{A}^*\n\mc{A} $. Thus, $(\mc{A}^*\n\mc{A})^{\dg} = \mc{A}^{\dg} \n(\mc{A}^*)^{\dg}$.\\
$(b)$ By replacing $\mc{A}$ by $\mc{A}^*$ in (a), we get the desired
result.
\end{proof}

Recall that a tensor $\mc{B} \in \mathbb{C}^{I_1\times\cdots\times
I_N \times I_1\times\cdots\times I_N }$ is called {\it idempotent}
if $\mc{B}\n \mc{B}=\mc{B}$. The next result presents  a
characterization of an idempotent tensor.

\begin{theorem}
 A tensor $\mc{C} \in \mathbb{C}^{I_1\times\cdots\times I_N \times I_1\times\cdots\times I_N }$ is idempotent if and only if there exists Hermitian and idempotent tensors $\mc{A}$ and $\mc{B}$ in $\mathbb{C}^{I_1\times\cdots\times I_N \times I_1\times\cdots\times I_N }$ such that $\mc{C} = (\mc{B}\n\mc{A})^{\dg}$ in which case $\mc{C}=\mc{A}\n\mc{C}\n\mc{B}$.
\end{theorem}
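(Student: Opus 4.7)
The plan is to prove the two directions of the biconditional separately, using only the four defining equations of Definition \ref{defmpi} and the involution property $(\mc{C}^{\dg})^{\dg} = \mc{C}$ mentioned in the preliminaries.

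For the \emph{only if} direction, I would start from an idempotent $\mc{C}$ and exhibit explicit Hermitian idempotents suggested by the Moore--Penrose equations themselves, namely $\mc{A} := \mc{C}\n\mc{C}^{\dg}$ and $\mc{B} := \mc{C}^{\dg}\n\mc{C}$. Conditions (3) and (4) of Definition \ref{defmpi} immediately give $\mc{A}^* = \mc{A}$ and $\mc{B}^* = \mc{B}$, while condition (2) gives $\mc{A}^2 = \mc{C}\n(\mc{C}^{\dg}\n\mc{C}\n\mc{C}^{\dg}) = \mc{C}\n\mc{C}^{\dg} = \mc{A}$ and similarly $\mc{B}^2 = \mc{B}$. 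Applying $\mc{C}^2 = \mc{C}$ once and then condition (2), I obtain $\mc{B}\n\mc{A} = \mc{C}^{\dg}\n\mc{C}\n\mc{C}\n\mc{C}^{\dg} = \mc{C}^{\dg}\n\mc{C}\n\mc{C}^{\dg} = \mc{C}^{\dg}$, so that $(\mc{B}\n\mc{A})^{\dg} = (\mc{C}^{\dg})^{\dg} = \mc{C}$. The same telescoping calculation gives $\mc{A}\n\mc{C}\n\mc{B} = \mc{C}\n\mc{C}^{\dg}\n\mc{C}\n\mc{C}^{\dg}\n\mc{C} = \mc{C}$.

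For the \emph{if} direction, assume $\mc{A}$, $\mc{B}$ are Hermitian idempotent, set $\mc{M} := \mc{B}\n\mc{A}$ (so $\mc{M}^* = \mc{A}\n\mc{B}$), and let $\mc{C} = \mc{M}^{\dg}$. My strategy is to verify that the candidate $\mc{A}\n\mc{C}\n\mc{B}$ satisfies all four Moore--Penrose equations relative to $\mc{M}$; by uniqueness this forces $\mc{A}\n\mc{C}\n\mc{B} = \mc{C}$, giving the ``in which case'' clause. Conditions (1) and (2) are routine after cancelling $\mc{A}^2 = \mc{A}$ and $\mc{B}^2 = \mc{B}$ and invoking $\mc{M}\n\mc{C}\n\mc{M} = \mc{M}$ and $\mc{C}\n\mc{M}\n\mc{C} = \mc{C}$. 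Once $\mc{C} = \mc{A}\n\mc{C}\n\mc{B}$ is in hand, idempotency of $\mc{C}$ drops out in one line from $\mc{C}^2 = \mc{A}\n\mc{C}\n(\mc{B}\n\mc{A})\n\mc{C}\n\mc{B} = \mc{A}\n(\mc{C}\n\mc{M}\n\mc{C})\n\mc{B} = \mc{A}\n\mc{C}\n\mc{B} = \mc{C}$.

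The main obstacle is verifying the two symmetry equations (3) and (4) for $\mc{A}\n\mc{C}\n\mc{B}$, which I would reduce to a pair of absorption identities. Taking $(\mc{M}\n\mc{C})^* = \mc{M}\n\mc{C}$ and expanding $\mc{M}^* = \mc{A}\n\mc{B}$ gives $\mc{C}^*\n\mc{A}\n\mc{B} = \mc{B}\n\mc{A}\n\mc{C}$; post-multiplying by $\mc{B}$ and using $\mc{B}^2 = \mc{B}$ then yields $\mc{M}\n\mc{C}\n\mc{B} = \mc{M}\n\mc{C}$, which is already Hermitian by hypothesis, so condition (3) holds for $\mc{M}\n(\mc{A}\n\mc{C}\n\mc{B}) = \mc{M}\n\mc{C}\n\mc{B}$. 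A dual manipulation starting from $(\mc{C}\n\mc{M})^* = \mc{C}\n\mc{M}$ and pre-multiplying by $\mc{A}$ delivers $\mc{A}\n\mc{C}\n\mc{M} = \mc{C}\n\mc{M}$, which gives condition (4). Once these two absorption identities are proved, the rest of the argument is mechanical bookkeeping.
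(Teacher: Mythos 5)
Your proof is correct, and the forward direction coincides with the paper's: both take $\mc{A}=\mc{C}\n\mc{C}^{\dg}$ and $\mc{B}=\mc{C}^{\dg}\n\mc{C}$ and telescope with the Penrose equations (you are in fact slightly more careful, since the paper never explicitly verifies that these two tensors are Hermitian and idempotent). The converse is where you diverge. The paper starts from the representation $\mc{M}^{\dg}=\mc{M}^*\n\big((\mc{M}^{\dg})^*\n\mc{M}^{\dg}\n(\mc{M}^{\dg})^*\big)\n\mc{M}^*$ for $\mc{M}=\mc{B}\n\mc{A}$, substitutes $\mc{M}^*=\mc{A}\n\mc{B}$ to write $\mc{C}=\mc{A}\n\mc{B}\n\mc{P}\n\mc{A}\n\mc{B}$, and reads off $\mc{A}\n\mc{C}\n\mc{B}=\mc{C}$ in one step from $\mc{A}^2=\mc{A}$ and $\mc{B}^2=\mc{B}$; this is shorter but relies on knowing (or first deriving) that five-factor identity. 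You instead verify the four Penrose equations for the candidate $\mc{A}\n\mc{C}\n\mc{B}$ relative to $\mc{M}$ and appeal to uniqueness of the Moore--Penrose inverse, which costs you the two absorption identities $\mc{M}\n\mc{C}\n\mc{B}=\mc{M}\n\mc{C}$ and $\mc{A}\n\mc{C}\n\mc{M}=\mc{C}\n\mc{M}$ --- both of which you derive correctly from the symmetry conditions and the Hermitian idempotency of $\mc{A}$ and $\mc{B}$. This route is a bit longer but uses nothing beyond the defining equations and uniqueness, and it isolates the two absorption identities as reusable facts. Both arguments finish identically: once $\mc{C}=\mc{A}\n\mc{C}\n\mc{B}$ is in hand, $\mc{C}^2=\mc{A}\n\mc{C}\n\mc{M}\n\mc{C}\n\mc{B}=\mc{A}\n\mc{C}\n\mc{B}=\mc{C}$.
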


\begin{proof}
Since $\mc{C}$ is idempotent, we get $ \mc{C} =
(\mc{C}^{\dg}\n\mc{C}\n\mc{C}^{\dg})^{\dg} $,    i.e., $\mc{C} =
(\mc{B}\n\mc{A})^{\dg}$, where $\mc{B}=\mc{C}^{\dg}\n\mc{C}$ and
$\mc{A}=\mc{C}\n\mc{C}^{\dg}$.
Then $\mc{A}\n\mc{C}\n\mc{B}=\mc{C}\n\mc{C}^{\dg}\n\mc{C}\n\mc{C}^{\dg}\n\mc{C}=\mc{C}\n\mc{C}^{\dg}\n\mc{C}=\mc{C}.$\\
Conversely,
\begin{eqnarray*}
\mc{C}&=&(\mc{B}\n\mc{A})^{\dg}\\
& = & (\mc{B}\n\mc{A})^*\n[((\mc{B}\n\mc{A})^{\dg})^{*}\n(\mc{B}\n\mc{A})^{\dg}\n((\mc{B}\n\mc{A})^{\dg})^*]\n(\mc{B}\n\mc{A})^*\\
& = & \mc{A}^*\n\mc{B}^*\n\mc{P}\n\mc{A}^*\n\mc{B}^*, \text{ where } \mc{P}=((\mc{B}\n\mc{A})^{\dg})^{*}\n(\mc{B}\n\mc{A})^{\dg}\n((\mc{B}\n\mc{A})^{\dg})^{*}\\
& = & \mc{A}\n\mc{B}\n\mc{P}\n\mc{A}\n\mc{B}.
\end{eqnarray*}
So,
$\mc{A}\n\mc{C}\n\mc{B}=\mc{A}\n\mc{A}\n\mc{B}\n\mc{P}\n\mc{A}\n\mc{B}\n\mc{B}=\mc{A}\n\mc{B}\n\mc{P}\n\mc{A}\n\mc{B}=\mc{C}$.
Now,
\begin{eqnarray*}
\mc{C}^2  & = & \mc{A}\n\mc{C}\n\mc{B}\n\mc{A}\n\mc{C}\n\mc{B}\\
& = & \mc{A}\n(\mc{B}\n\mc{A})^{\dg}*\mc{B}\n\mc{A}\n(\mc{B}\n\mc{A})^{\dg}\n\mc{B}\\
& = & \mc{A}\n(\mc{B}\n\mc{A})^{\dg}\n\mc{B}\\
& = & \mc{A}\n\mc{C}\n\mc{B}=\mc{C}.
\end{eqnarray*}
So, $\mc{C}$ is idempotent.
\end{proof}

Next result contains three equivalent conditions involving the
Moore-Penrose inverse.
\begin{theorem}
Let $\mc{A}\in \mathbb{C}^{I_1\times\cdots\times I_N \times
I_1\times\cdots\times I_N}$ and $\mc{B} \in
\mathbb{C}^{I_1\times\cdots\times I_N \times I_1\times\cdots\times
I_N}$. Then, the following three conditions are equivalent:
\begin{itemize}
\item[(i)] $\mc{B}\n\mc{A}^{\dg} =\mc{O}$;
\item[(ii)] $\mc{B}\n\mc{A}^* =\mc{O}$;
\item[(iii)] $\mc{B}\n\mc{A}^{\dg}\n\mc{A} =\mc{O}$.
\end{itemize}
\end{theorem}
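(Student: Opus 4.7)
The plan is to establish the three conditions are equivalent by proving the cyclic chain $(i) \Rightarrow (ii) \Rightarrow (iii) \Rightarrow (i)$. Each step should amount to nothing more than left- or right-multiplying the hypothesis by a suitable tensor and invoking one of the identities in Lemma \ref{revA} or the defining equations in Definition \ref{defmpi}. The associativity of the Einstein product, which has already been noted, is what makes these one-line manipulations legitimate.

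For $(i) \Rightarrow (ii)$, I would rewrite $\mc{A}^*$ using Lemma \ref{revA}(a), namely $\mc{A}^* = \mc{A}^{\dg}\n\mc{A}\n\mc{A}^*$, so that $\mc{B}\n\mc{A}^* = (\mc{B}\n\mc{A}^{\dg})\n\mc{A}\n\mc{A}^* = \mc{O}$. For $(ii) \Rightarrow (iii)$, I would apply Lemma \ref{revA}(c) in the form $\mc{A}^{\dg} = \mc{A}^*\n(\mc{A}\n\mc{A}^*)^{\dg}$; inserting this into $\mc{B}\n\mc{A}^{\dg}\n\mc{A}$ yields $(\mc{B}\n\mc{A}^*)\n(\mc{A}\n\mc{A}^*)^{\dg}\n\mc{A}$, which vanishes by the hypothesis. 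For $(iii) \Rightarrow (i)$, I would right-multiply the hypothesis by $\mc{A}^{\dg}$ to get $\mc{B}\n\mc{A}^{\dg}\n\mc{A}\n\mc{A}^{\dg} = \mc{O}$, then use equation (2) of Definition \ref{defmpi}, which tells us $\mc{A}^{\dg}\n\mc{A}\n\mc{A}^{\dg} = \mc{A}^{\dg}$, to conclude $\mc{B}\n\mc{A}^{\dg} = \mc{O}$.

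There is essentially no obstacle here beyond choosing the correct identity at each step; the proof is purely computational, and the cyclic structure is dictated simply by matching the shape of each hypothesis to a known form for $\mc{A}^*$ or $\mc{A}^{\dg}$ that produces the desired conclusion. An equally valid alternative would be to prove the pairwise equivalences $(i) \Leftrightarrow (iii)$ (both directions are immediate from Definition \ref{defmpi}, equations (1) and (2)) and $(i) \Leftrightarrow (ii)$ (using Lemma \ref{revA}(a) and (c)); but the cyclic version keeps each implication to a single line and is the most economical presentation.
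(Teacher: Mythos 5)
Your proof is correct and follows essentially the same route as the paper: a cyclic chain of one-line post-multiplications using Lemma \ref{revA} and the defining equations of the Moore--Penrose inverse, with (i)$\Rightarrow$(ii) and (iii)$\Rightarrow$(i) identical to the paper's steps. The only cosmetic difference is in (ii)$\Rightarrow$(iii), where you substitute $\mc{A}^{\dg} = \mc{A}^*\n(\mc{A}\n\mc{A}^*)^{\dg}$ from Lemma \ref{revA}(c) instead of post-multiplying by $(\mc{A}^*)^{\dg}$ as the paper does; both are equally valid.
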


\begin{proof}
(i)$\Rightarrow$(ii) By post-multiplying $\mc{A}\n \mc{A}^*$ to $\mc{B}\n \mc{A}^{\dg}=\mc{O}$ and then using Lemma \ref{revA} (a), we have $\mc{B}\n \mc{A}^*=\mc{O}$.\\
(ii)$\Rightarrow$(iii) Post-multiplying $\mc{B}\n\mc{A}^* = \mc{O} $
by $(\mc{A}^*)^{\dg}$, we get
$\mc{B}\n\mc{A}^{\dg}\n\mc{A}= \mc{O} $.\\
(iii)$\Rightarrow$(i) Post-multiplying $\mc{B}\n\mc{A}^{\dg}\n\mc{A}
= \mc{O} $ by $\mc{A}^{\dg}$, we obtain $\mc{B}\n\mc{A}^{\dg}=
\mc{O} $.
\end{proof}
A sufficient condition for the commutativity of $\mc{A}$  and
$\mc{A}^{\dg}$ is provided next.
\begin{theorem}
Let $\mc{A}\in \mathbb{C}^{I_1\times\cdots\times I_N \times
I_1\times\cdots\times I_N } $. If $\mc{A}\n\mc{A}^* = \mc{A}^* \n
\mc{A}$, then  $\mc{A}\n\mc{A}^{\dg} = \mc{A}^{\dg} \n \mc{A}$.
\end{theorem}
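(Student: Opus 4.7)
The plan is to reduce the problem to the fact that any Hermitian tensor $\mc{M}$ satisfies $\mc{M}\n\mc{M}^{\dg}=\mc{M}^{\dg}\n\mc{M}$, and then apply this to $\mc{M}=\mc{A}^*\n\mc{A}=\mc{A}\n\mc{A}^*$. The main tool will be Lemma \ref{revA}(c), which gives the two useful representations $\mc{A}^{\dg}=\mc{A}^*\n(\mc{A}\n\mc{A}^*)^{\dg}$ and $\mc{A}^{\dg}=(\mc{A}^*\n\mc{A})^{\dg}\n\mc{A}^*$ of the Moore-Penrose inverse in terms of the (generally nicer) Hermitian tensors $\mc{A}\n\mc{A}^*$ and $\mc{A}^*\n\mc{A}$.

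First, I would write $\mc{A}\n\mc{A}^{\dg}=\mc{A}\n\mc{A}^*\n(\mc{A}\n\mc{A}^*)^{\dg}$ and $\mc{A}^{\dg}\n\mc{A}=(\mc{A}^*\n\mc{A})^{\dg}\n\mc{A}^*\n\mc{A}$, both via Lemma \ref{revA}(c). Setting $\mc{M}:=\mc{A}\n\mc{A}^*=\mc{A}^*\n\mc{A}$ (using the hypothesis), these expressions become $\mc{A}\n\mc{A}^{\dg}=\mc{M}\n\mc{M}^{\dg}$ and $\mc{A}^{\dg}\n\mc{A}=\mc{M}^{\dg}\n\mc{M}$, so the goal collapses to $\mc{M}\n\mc{M}^{\dg}=\mc{M}^{\dg}\n\mc{M}$.

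Since $\mc{M}^{*}=(\mc{A}^{*}\n\mc{A})^{*}=\mc{A}^{*}\n\mc{A}=\mc{M}$, the tensor $\mc{M}$ is Hermitian. I would then invoke the third Moore-Penrose axiom from Definition \ref{defmpi} to get that $\mc{M}\n\mc{M}^{\dg}$ is Hermitian, and compute
\begin{equation*}
\mc{M}\n\mc{M}^{\dg}=(\mc{M}\n\mc{M}^{\dg})^{*}=(\mc{M}^{\dg})^{*}\n\mc{M}^{*}=(\mc{M}^{*})^{\dg}\n\mc{M}=\mc{M}^{\dg}\n\mc{M},
\end{equation*}
where the second equality uses the general property $(\mc{A}^{*})^{\dg}=(\mc{A}^{\dg})^{*}$ recalled after Lemma 2.1, and the last equality uses $\mc{M}^{*}=\mc{M}$.

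The argument is essentially routine once the two representations from Lemma \ref{revA}(c) are in hand; the only mildly delicate point, which I would take care to state explicitly, is that the normality hypothesis is precisely what is needed to identify $\mc{A}\n\mc{A}^{*}$ and $\mc{A}^{*}\n\mc{A}$ as the same Hermitian tensor $\mc{M}$, so that the two one-sided formulas for $\mc{A}^{\dg}$ produce the same product on both sides. No case analysis, SVD, or additional identities beyond Lemma \ref{revA}(c) and the conjugation rule for $\dg$ should be needed.
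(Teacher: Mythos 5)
Your proposal is correct and follows essentially the same route as the paper: both arguments funnel $\mc{A}\n\mc{A}^{\dg}$ and $\mc{A}^{\dg}\n\mc{A}$ through the Gram tensors, rewriting them as $(\mc{A}\n\mc{A}^*)$-- and $(\mc{A}^*\n\mc{A})$--expressions (the paper via Definition \ref{defmpi} and Theorem \ref{theo11}, you via Lemma \ref{revA}(c)) and then letting the normality hypothesis identify the two. Your extra reduction to the fact that a Hermitian tensor commutes with its Moore--Penrose inverse is a clean, correctly justified packaging of the same idea, not a different method.
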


\begin{proof}
By using Definition \ref{defmpi}, we have
\begin{equation}\label{eq1}
\mc{A}\n\mc{A}^{\dg} =
(\mc{A}^{\dg})^*\n\mc{A}^{\dg}\n\mc{A}\n\mc{A}^*.
\end{equation}
Since $ \mc{A}\n\mc{A}^* = \mc{A}^* \n \mc{A} $, equation
\eqref{eq1} implies
\begin{equation}\label{eq2}
\mc{A}\n\mc{A}^{\dg} = (\mc{A}^*\n\mc{A})^{\dg}\n\mc{A}^*\n\mc{A}.
\end{equation}
By using Theorem \ref{theo11}  (a), equation \eqref{eq2} reduces to
$\mc{A}\n\mc{A}^{\dg} =\mc{A}^{\dg}\n\mc{A}.$
\end{proof}
Converse is not true, and is shown by the following example.

\begin{ex}\label{ex31}
Consider tensors
 $\mc{A} = (a_{ijkl})_{1 \leq i,j,k,l \leq 2}  \in \mathbb{R}^{2\times 2\times 2\times 2}$ such that
\begin{eqnarray*}
a_{ij11} =
    \begin{pmatrix}
    0 & 0\\
    0 & 1\\
    \end{pmatrix},~
a_{ij21} =
    \begin{pmatrix}
    1 & -1\\
    0 & 0\\
    \end{pmatrix},~
a_{ij12} =
    \begin{pmatrix}
    0 & 1\\
    0 & 0\\
    \end{pmatrix} ~~and~~
a_{ij22} =
    \begin{pmatrix}
    1 & 0\\
    -1 & 0\\
    \end{pmatrix}.
\end{eqnarray*}
Then   $\mc{A}^\dg = (x_{ijkl})_{1 \leq i,j,k,l \leq 2}  \in
\mathbb{R}^{2\times 2\times 2\times 2}$ and $\mc{A}^*= (y_{ijkl})_{1
\leq i,j,k,l \leq 2}  \in \mathbb{R}^{2\times 2\times 2\times 2}$,
where
\begin{eqnarray*}
x_{ij11} =
    \begin{pmatrix}
    0 & 1\\
    1 & 0\\
    \end{pmatrix},~
x_{ij21} =
    \begin{pmatrix}
    0 & 1\\
    1 & -1 \\
    \end{pmatrix},~
x_{ij12} =
    \begin{pmatrix}
    0 & 1\\
    0 & 0\\
    \end{pmatrix} ~~and~~
    x_{ij22} =
    \begin{pmatrix}
    1 & 0\\
    0 & 0\\
    \end{pmatrix},
\end{eqnarray*}
and
\begin{eqnarray*}
y_{ij11} =
    \begin{pmatrix}
    0 & 0\\
    1 & 1\\
    \end{pmatrix},~
y_{ij21} =
    \begin{pmatrix}
    0 & 0\\
    0 & -1 \\
    \end{pmatrix},~
y_{ij12} =
    \begin{pmatrix}
    0 & 1\\
    -1 & 0\\
    \end{pmatrix} ~~and~~
    y_{ij22} =
    \begin{pmatrix}
    1 & 0\\
    0 & 0\\
    \end{pmatrix}.
\end{eqnarray*}

We thus have $$\mc{A} \n \mc{A}^\dg=\mc{A}^\dg \n \mc{A},$$  where
\begin{eqnarray*}
(\mc{A} \n \mc{A}^\dg)_{ij11} =
    \begin{pmatrix}
    1 & 0\\
    0 & 0\\
    \end{pmatrix}=(\mc{A}^{\dg} \n \mc{A})_{ij11},~
(\mc{A}^{\dg} \n \mc{A})_{ij21} =
    \begin{pmatrix}
    0 & 1\\
    0 & 0 \\
    \end{pmatrix}=(\mc{A}^{\dg} \n \mc{A})_{ij21},
    \end{eqnarray*}
    \begin{eqnarray*}
(\mc{A} \n \mc{A}^\dg)_{ij12} =
    \begin{pmatrix}
    0 & 0\\
    1 & 0\\
    \end{pmatrix}=(\mc{A}^{\dg} \n \mc{A})_{ij12}, ~~
(\mc{A}\n \mc{A}^\dg)_{ij22} =
    \begin{pmatrix}
    0 & 0\\
    0 & 1\\
    \end{pmatrix}=(\mc{A}^{\dg} \n \mc{A})_{ij22}.
\end{eqnarray*}
But $$\mc{A} \n \mc{A}^* \neq \mc{A}^*\n \mc{A},$$  where

\begin{eqnarray*}
(\mc{A} \n \mc{A}^*)_{ij11} =
    \begin{pmatrix}
    2 & -1\\
    -1 & 0\\
    \end{pmatrix},~
(\mc{A}\n \mc{A}^*)_{ij21} =
    \begin{pmatrix}
    -1 & 2\\
    0 & 0 \\
    \end{pmatrix},
    \end{eqnarray*}
    \begin{eqnarray*}
(\mc{A} \n \mc{A}^*)_{ij12} =
    \begin{pmatrix}
    -1 & 0\\
    1 & 0\\
    \end{pmatrix},~~
(\mc{A} \n \mc{A}^*)_{ij22} =
    \begin{pmatrix}
    0 & 0\\
    0 & 1\\
    \end{pmatrix},
\end{eqnarray*}
and
\begin{eqnarray*}
(\mc{A}^* \n\mc{A})_{ij11} =
    \begin{pmatrix}
    1 & 0\\
    0 & 0\\
    \end{pmatrix},~
(\mc{A}^* \n \mc{A})_{ij21} =
    \begin{pmatrix}
     0 & 1\\
    -1 & 0 \\
    \end{pmatrix},
    \end{eqnarray*}
    \begin{eqnarray*}
(\mc{A}^* \n \mc{A})_{ij12} =
    \begin{pmatrix}
    0 & -1\\
    2 & 1\\
    \end{pmatrix},~~
(\mc{A}^* \n \mc{A})_{ij22} =
    \begin{pmatrix}
    0 & 0\\
    1 & 2\\
    \end{pmatrix}.
\end{eqnarray*}
\end{ex}

We  add a few properties of  the trace of a tensor below. The first
one shows that  the trace of a tensor is a linear mapping.

\begin{lemma}\label{traceproduct}
Let $\mc{A} \in \mathbb{C}^{I_1\times\cdots\times I_N \times
I_1\times\cdots\times I_N}$,   $\mc{B} \in
\mathbb{C}^{I_1\times\cdots\times I_N \times I_1\times\cdots\times
I_N}$ and $\alpha, \beta \in \mathbb{C}$. Then
$$tr(\alpha\mc{A} + \beta\mc{B}) = \alpha tr(\mc{A}) + \beta tr(\mc{B}).$$
\end{lemma}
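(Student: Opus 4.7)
The plan is to unpack everything entrywise and appeal to the linearity of a finite sum of complex numbers. Since the result is stated for tensors of the same shape $I_1 \times \cdots \times I_N \times I_1 \times \cdots \times I_N$, the sum $\alpha\mc{A} + \beta\mc{B}$ is well-defined and its $(i_1\ldots i_N j_1\ldots j_N)$-entry is simply $\alpha\, a_{i_1\ldots i_N j_1\ldots j_N} + \beta\, b_{i_1\ldots i_N j_1\ldots j_N}$.

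First I would write down the definition of the trace from Definition \ref{Tracedefn} applied to $\alpha\mc{A} + \beta\mc{B}$, namely
\begin{equation*}
tr(\alpha\mc{A} + \beta\mc{B}) = \sum_{i_1\cdots i_N} \bigl(\alpha\mc{A} + \beta\mc{B}\bigr)_{i_1\ldots i_N\, i_1\ldots i_N}.
\end{equation*}
Next, I would substitute the entrywise formula for scalar multiplication and addition of tensors to rewrite the summand as $\alpha\, a_{i_1\ldots i_N\, i_1\ldots i_N} + \beta\, b_{i_1\ldots i_N\, i_1\ldots i_N}$. Then I would split the finite sum and pull out the scalars $\alpha,\beta \in \mathbb{C}$, which is just the linearity of summation, to obtain
\begin{equation*}
\alpha \sum_{i_1\cdots i_N} a_{i_1\ldots i_N\, i_1\ldots i_N} + \beta \sum_{i_1\cdots i_N} b_{i_1\ldots i_N\, i_1\ldots i_N} = \alpha\, tr(\mc{A}) + \beta\, tr(\mc{B}).
\end{equation*}

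There is no real obstacle here; the lemma is an immediate consequence of the definition of trace combined with the linearity of finite sums over $\mathbb{C}$. The only thing to be careful about is notational: making sure the multi-index $(i_1,\ldots,i_N)$ is used consistently for both the row-index and the column-index part so that we are genuinely summing diagonal entries, and that the scalar multiplication $\alpha\mc{A}$ and tensor addition are interpreted componentwise (which is the standard convention, though the paper does not state it explicitly). This short, purely computational proof is the natural way to proceed and requires none of the deeper machinery (SVD, Moore-Penrose identities, Lemma \ref{revA}) developed earlier in the paper.
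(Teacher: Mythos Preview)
Your proof is correct and follows essentially the same route as the paper's: apply Definition~\ref{Tracedefn} to $\alpha\mc{A}+\beta\mc{B}$, expand the diagonal entry componentwise, and split the finite sum using linearity. There is no meaningful difference between your argument and the paper's.
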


\begin{proof}
We have
\begin{equation*}
tr(\mc{A}) = \displaystyle\sum_{i_1 \cdots
i_N}a_{{i_1...i_N}{i_1...i_N}} ~~~\textnormal{and}~~~ tr(\mc{B}) =
\displaystyle\sum_{i_1 \cdots i_N}b_{{i_1...i_N}{i_1...i_N}}.
\end{equation*}
So,
\begin{eqnarray*}
tr(\alpha\mc{A} + \beta\mc{B}) & = &  \displaystyle\sum_{i_1... i_N} (\alpha a_{{i_1...i_N}{i_1...i_N}} +\beta b_{{i_1...i_N}{i_1...i_N}}) \\
& = & \alpha \displaystyle\sum_{i_1... i_N}  a_{{i_1...i_N}{i_1...i_N}} + \beta \displaystyle\sum_{i_1... i_N}  b_{{i_1...i_N}{i_1...i_N}} \\
& = & \alpha tr(\mc{A}) + \beta tr(\mc{A}).
\end{eqnarray*}
\end{proof}

Note that for tensors  $\mc{A}  =(a_{{i_1}...{i_N}{j_1}...{j_M}})
\in \mathbb{C}^{I_1\times\cdots\times I_N \times J_1
\times\cdots\times J_M}$  and  $\mc{B}
=(b_{{j_1}...{j_M}{i_1}...{i_N}})\\ \in
\mathbb{C}^{J_1\times\cdots\times J_M \times I_1 \times\cdots\times
I_N}$, we have
\begin{eqnarray*}
tr(\mc{A}\m\mc{B})  &=& \displaystyle\sum_{i_1... i_N} \left(\displaystyle\sum_{k_1...k_M}   a_{{i_1...i_N}{k_1...k_M}}b_{{k_1...k_M}{i_1...i_N}}\right) \\
&=& \displaystyle\sum_{k_1... k_M} \left(\displaystyle\sum_{i_1...i_N}   a_{{i_1...i_N}{k_1...k_M}}b_{{k_1...k_M}{i_1...i_N}}\right) \\
&=& \displaystyle\sum_{k_1... k_M} \left(\displaystyle\sum_{i_1...i_N}  b_{{k_1...k_M}{i_1...i_N}} a_{{i_1...i_N}{k_1...k_M}} \right) \\
&=& tr(\mc{B}\n\mc{A}).
\end{eqnarray*}
Hence,  the tensors in the trace of a product can be switched
without changing the result. This is stated in the next result.

\begin{lemma}\label{traceABBA}
Let $\mc{A}  =(a_{{i_1}...{i_N}{j_1}...{j_M}}) \in
\mathbb{C}^{I_1\times\cdots\times I_N \times J_1 \times\cdots\times
J_M}$  and  $\mc{B}  =(b_{{j_1}...{j_M}{i_1}...{i_N}})\\ \in
\mathbb{C}^{J_1\times\cdots\times J_M \times I_1 \times\cdots\times
I_N}$ be two tensors. Then $tr(\mc{A}\m\mc{B}) =
tr(\mc{B}\n\mc{A})$.
\end{lemma}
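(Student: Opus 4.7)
The plan is to verify the identity directly from the definition of the trace (Definition \ref{Tracedefn}) combined with the index-level definitions of the Einstein products $\m$ and $\n$. Since both $\mc{A}\m\mc{B}$ and $\mc{B}\n\mc{A}$ are square tensors whose diagonal entries are sums of products of entries of $\mc{A}$ and $\mc{B}$, the result ultimately reduces to swapping the order of a double sum of scalars and using commutativity of scalar multiplication in $\mathbb{C}$.

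More concretely, I would first write
\begin{equation*}
tr(\mc{A}\m\mc{B}) \;=\; \sum_{i_1,\ldots,i_N}(\mc{A}\m\mc{B})_{i_1\ldots i_N\, i_1\ldots i_N},
\end{equation*}
and then expand the entries of $\mc{A}\m\mc{B}$ via the Einstein product $\m$, which contracts the $M$ indices $k_1,\ldots,k_M$, to obtain a double sum of scalars $a_{i_1\ldots i_N k_1\ldots k_M} b_{k_1\ldots k_M i_1\ldots i_N}$. Next I would interchange the order of summation (finite sums, so this is harmless), apply commutativity of scalar multiplication to move the $b$-entry to the left, and then recognize the inner sum over $i_1,\ldots,i_N$ as precisely the diagonal entry $(\mc{B}\n\mc{A})_{k_1\ldots k_M\, k_1\ldots k_M}$ of $\mc{B}\n\mc{A}$. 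Summing over $k_1,\ldots,k_M$ then yields $tr(\mc{B}\n\mc{A})$.

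There is no real obstacle here beyond bookkeeping: the two products $\m$ and $\n$ have different contraction dimensions (over $M$ versus $N$ indices respectively), so care is needed to ensure the correct indices are summed and that the resulting tensor on each side is indeed square, so that its trace is defined. In fact the computation is already spelled out in the display immediately preceding the lemma statement, so the proof essentially amounts to invoking that chain of equalities; the lemma is the formalization of that calculation.
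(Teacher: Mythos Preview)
Your proposal is correct and follows exactly the same approach as the paper: both expand $tr(\mc{A}\m\mc{B})$ via the definitions of trace and the Einstein product, swap the order of the two finite sums, use commutativity of scalar multiplication, and then recognize the result as $tr(\mc{B}\n\mc{A})$. As you observed, this is precisely the chain of equalities displayed in the paper immediately before the lemma.
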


Observe that  if $\mc{A}  =(a_{{i_1}...{i_N}{i_1}...{i_N}}), ~
\mc{X}  =(x_{{i_1}...{i_N}{i_1}...{i_N}})  \in
\mathbb{C}^{I_1\times\cdots\times I_N \times I_1 \times\cdots\times
I_N}$, then by Lemma  \ref{traceproduct} and  Lemma \ref{traceABBA},
we can write $tr(\mc{A}\n\mc{X} - \mc{X}\n\mc{A}) = \mc{O}$ but
$tr(\mc{I}) = I_1\times\cdots\times I_N$ and $ I_1\times\cdots\times
I_N \neq 0$, where $\mc{I} \in \mathbb{C}^{I_1\times\cdots\times I_N
\times I_1 \times\cdots\times I_N}$ is the unit tensor. Hence, it is
impossible to find $\mc{X}$ such that $\mc{A}\n\mc{X} -
\mc{X}\n\mc{A} = \mc{I}.$ The next result is the most important tool
for proving the primary result of this paper. The proof uses the
notion of the trace of a tensor.

\begin{lemma}\label{lm1}
Let $\mc{A}\in \mathbb{C}^{I_1\times\cdots\times I_N \times
J_1\times\cdots\times J_M }$. If $\mc{A}^* \n \mc{A}=\mc{O}$, then
$\mc{A}=\mc{O}$.
\end{lemma}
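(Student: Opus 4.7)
The plan is to reduce the statement to the familiar fact that a sum of squared moduli vanishes only when every summand does. The natural quantity to inspect is the trace $tr(\mc{A}^* \n \mc{A})$: if $\mc{A}^* \n \mc{A} = \mc{O}$, then certainly $tr(\mc{A}^* \n \mc{A}) = 0$, and if one can show that this trace is precisely $\sum |a_{i_1 \ldots i_N j_1 \ldots j_M}|^2$, the conclusion is immediate.

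First, I would write out the $(j_1 \ldots j_M,\, j_1' \ldots j_M')$ entry of $\mc{A}^* \n \mc{A}$ from the definition of the Einstein product and of the conjugate transpose. Using $(\mc{A}^*)_{j_1 \ldots j_M i_1 \ldots i_N} = \overline{a_{i_1 \ldots i_N j_1 \ldots j_M}}$, this entry equals
\[
\sum_{i_1, \ldots, i_N} \overline{a_{i_1 \ldots i_N j_1 \ldots j_M}} \; a_{i_1 \ldots i_N j_1' \ldots j_M'}.
\]
Then I would apply Definition \ref{Tracedefn}, setting $j_k' = j_k$ and summing over the diagonal indices, to obtain
\[
tr(\mc{A}^* \n \mc{A}) \;=\; \sum_{j_1, \ldots, j_M} \sum_{i_1, \ldots, i_N} \bigl| a_{i_1 \ldots i_N j_1 \ldots j_M} \bigr|^{2}.
\]

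Given the hypothesis $\mc{A}^* \n \mc{A} = \mc{O}$, this nonnegative sum must be zero, which forces $a_{i_1 \ldots i_N j_1 \ldots j_M} = 0$ for every index tuple, hence $\mc{A} = \mc{O}$. No real obstacle arises here beyond careful bookkeeping of the index ranges of $\mc{A}^*$ and making sure the indices summed in $\mc{A}^* \n \mc{A}$ match those along which the conjugate transpose is taken; Lemma \ref{traceABBA} plus Definition \ref{Tracedefn} essentially package this computation, but I think it is cleaner in this short lemma to perform the entrywise calculation directly rather than to invoke the lemmas, since the point is really that the diagonal of $\mc{A}^* \n \mc{A}$ consists of squared norms of the ``slices'' of $\mc{A}$.
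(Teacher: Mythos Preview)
Your proposal is correct and follows essentially the same approach as the paper: both compute $tr(\mc{A}^* \n \mc{A})$ entrywise, identify it as the sum of squared moduli of all entries of $\mc{A}$, and conclude. Your index bookkeeping (placing $\mc{A}^* \n \mc{A}$ in $\mathbb{C}^{J_1\times\cdots\times J_M \times J_1\times\cdots\times J_M}$ and summing the diagonal over the $j$-indices) is in fact tidier than the paper's, which labels the diagonal of $\mc{C}=\mc{A}^*\n\mc{A}$ with $i_1\ldots i_N$.
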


\begin{proof}
Suppose that $ \mc{A}^* \n\mc{A}  =  \mc{O} $, then $ tr(\mc{O}) =
tr(\mc{A}^*\n\mc{A}) $. Let $\mc{C} =\mc{A}^*\n \mc{A}$. Then
\begin{eqnarray*}
c_{i_1...i_Ni_1...i_N}&=&\displaystyle\sum_{k_1...k_M}\overline{a}_{{i_1...i_N}{k_1...k_M}}{a}_{{i_1...i_N}{k_1...k_M}}.
\end{eqnarray*}
Using  Definition  \ref{Tracedefn},  we can now write
\begin{eqnarray*}
0 &  =  & \displaystyle\sum_{i_1... i_N}c_{{i_1...i_N}{i_1...i_N}} \\
 &  =  & \displaystyle\sum_{i_1 ... i_N} \displaystyle\sum_{k_1...k_M}\overline{a}_{{i_1...i_N}{k_1...k_M}}{a}_{{i_1...i_N}{k_1... k_M}}\\
 &  = &  \displaystyle\sum_{i_1 ... i_N} \displaystyle\sum_{k_1...k_M} |a_{{i_1...i_N}{k_1...k_M}}|^2.
\end{eqnarray*}
This implies  each $ |a_{{i_1...i_N}{k_1...k_M}}|^2 =0$ and hence
each $a_{{i_1...i_N}{k_1...k_M}} =0$. Thus $\mc{A} = \mc{O}$.
\end{proof}

We have the following remark obtained by replacing $\mc{A}^*$ in the
place of $\mc{A}$.

\begin{remark}\label{rlm1}
Let $\mc{A}\in \mathbb{C}^{I_1\times\cdots\times I_N \times
J_1\times\cdots\times J_M }$. If $\mc{A} \m \mc{A}^*=\mc{O}$, then
$\mc{A}=\mc{O}$.
\end{remark}

We further  obtain  certain properties of the trace of a tensor
below.

\begin{lemma}
Let $\mc{A} \in \mathbb{C}^{I_1\times\cdots\times I_N \times I_1\times\cdots\times I_N}$ and $\mc{B} \in \mathbb{C}^{I_1\times\cdots\times I_N \times I_1\times\cdots\times I_N}$ be two tensors. Then\\
(a)~$ tr(\mc{A}^\dg\n\mc{B}\n\mc{A}) = tr(\mc{B})$ if  either $\mc{B}\n\mc{A}\n \mc{A}^{\dg}=\mc{B}$ or $\mc{A}\n \mc{A}^{\dg}\n \mc{B}=\mc{B}$; \\
(b)~$tr(\mc{A}^*) = \overline{tr(\mc{A})}$.
\end{lemma}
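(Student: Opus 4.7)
The plan for part (a) is to exploit the cyclic property of the trace established in Lemma \ref{traceABBA} together with the associativity of the Einstein product. Under the hypothesis $\mathcal{B} *_N \mathcal{A} *_N \mathcal{A}^\dagger = \mathcal{B}$, I would set $\mathcal{X} = \mathcal{A}^\dagger$ and $\mathcal{Y} = \mathcal{B} *_N \mathcal{A}$ and apply Lemma \ref{traceABBA} to obtain
\[
tr(\mathcal{A}^\dagger *_N \mathcal{B} *_N \mathcal{A}) = tr\bigl(\mathcal{A}^\dagger *_N (\mathcal{B} *_N \mathcal{A})\bigr) = tr\bigl((\mathcal{B} *_N \mathcal{A}) *_N \mathcal{A}^\dagger\bigr) = tr(\mathcal{B} *_N \mathcal{A} *_N \mathcal{A}^\dagger) = tr(\mathcal{B}).
\]
Under the alternative hypothesis $\mathcal{A} *_N \mathcal{A}^\dagger *_N \mathcal{B} = \mathcal{B}$, the same lemma with the grouping $\mathcal{X} = \mathcal{A}^\dagger *_N \mathcal{B}$ and $\mathcal{Y} = \mathcal{A}$ gives
\[
tr(\mathcal{A}^\dagger *_N \mathcal{B} *_N \mathcal{A}) = tr\bigl((\mathcal{A}^\dagger *_N \mathcal{B}) *_N \mathcal{A}\bigr) = tr(\mathcal{A} *_N \mathcal{A}^\dagger *_N \mathcal{B}) = tr(\mathcal{B}).
\]

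For part (b) the plan is purely a computation from the definitions. Using the description of the conjugate transpose given in the Preliminaries section, the diagonal entries of $\mathcal{A}^*$ satisfy $(\mathcal{A}^*)_{i_1\ldots i_N i_1\ldots i_N} = \overline{a}_{i_1\ldots i_N i_1\ldots i_N}$. Applying Definition \ref{Tracedefn} and pulling the conjugation outside the finite sum yields
\[
tr(\mathcal{A}^*) = \sum_{i_1,\ldots,i_N} \overline{a}_{i_1\ldots i_N i_1\ldots i_N} = \overline{\sum_{i_1,\ldots,i_N} a_{i_1\ldots i_N i_1\ldots i_N}} = \overline{tr(\mathcal{A})}.
\]

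Neither part looks to have a genuine obstacle; both are essentially immediate consequences of the cyclic trace identity (Lemma \ref{traceABBA}) and the defining formulas for conjugate transpose and trace. The only subtlety worth flagging is in part (a), where one must be careful that the dimensions of $\mathcal{A}^\dagger$, $\mathcal{B}$, and $\mathcal{A}$ all match so that Lemma \ref{traceABBA} applies with the Einstein product $*_N$; since the statement takes both $\mathcal{A}$ and $\mathcal{B}$ in $\mathbb{C}^{I_1\times\cdots\times I_N\times I_1\times\cdots\times I_N}$, this compatibility is automatic, and the argument reduces to the two displayed chains above.
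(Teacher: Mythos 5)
Your proposal is correct and follows essentially the same route as the paper: part (a) is exactly the application of Lemma \ref{traceABBA} to swap $\mc{A}^{\dg}$ past $\mc{B}\n\mc{A}$ (resp.\ $\mc{A}$ past $\mc{A}^{\dg}\n\mc{B}$) and then invoke the hypothesis, and part (b) is the same direct computation from Definition \ref{Tracedefn}. You merely spell out the second case of (a), which the paper dismisses with ``follows similarly.''
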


\begin{proof}
(a) By Lemma \ref{traceABBA}, we obtain
$$tr(\mc{A}^\dg\n\mc{B}\n\mc{A}) = tr(\mc{B}\n\mc{A}\n\mc{A}^\dg) =  tr(\mc{B}),$$ since $\mc{B}\n\mc{A}\n\mc{A}^{\dg}=\mc{B}.$
The proof for the other condition follows similarly.

(b)~ Using  Definition \ref{Tracedefn}, we have
\begin{equation*}
tr(\mc{A}^*) =
\displaystyle\sum_{i_1...i_N}\overline{a}_{{i_1...i_N}{i_1...i_N}} =
\overline{\displaystyle\sum_{i_1...i_N}a_{{i_1...i_N}{i_1...i_N}}} =
\overline{tr(\mc{A})}.
\end{equation*}
\end{proof}

Note that if $\mc{A}$ is invertible in (a) of the above Lemma, then
$ tr(\mc{A}^{-1}\n\mc{B}\n\mc{A}) = tr(\mc{B})$. Clearly, a real
tensor and its transpose have the same trace. Using the linear
property of the trace, it can also be shown that the trace of a
skew-symmetric tensor is zero. The fact $tr(\mc{A})=tr(\mc{A}^T)$
and  Lemma \ref{traceABBA} together yield the following lemma.

\begin{lemma}
If $\mc{A}\in \mathbb{R}^{I_1\times\cdots\times I_N \times
I_1\times\cdots\times I_N}$ and $~\mc{B} \in
\mathbb{R}^{I_1\times\cdots\times I_N \times I_1\times\cdots\times
I_N}$, then $$tr(\mc{A}^T\n\mc{B}) = tr(\mc{B}^T\n\mc{A}) =
tr(\mc{A}\n\mc{B}^T) = tr(\mc{B}\n\mc{A}^T).$$
\end{lemma}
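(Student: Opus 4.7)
The paper's narration already telegraphs the two ingredients: the ``trace equals trace of transpose'' identity for real tensors, and the cyclic property in Lemma \ref{traceABBA}. The plan is therefore to chain these together to get a four-term equality, observing that since $\mc{A}$ and $\mc{B}$ have matching square dimensions $I_1\times\cdots\times I_N\times I_1\times\cdots\times I_N$, the product $\n$ makes sense in both orders.

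The first two equalities I would obtain directly from cyclic invariance. Setting $\mc{X}=\mc{A}^T$ and $\mc{Y}=\mc{B}$ (both of the same square format) in Lemma \ref{traceABBA} gives
\[
tr(\mc{A}^T\n\mc{B}) = tr(\mc{B}\n\mc{A}^T),
\]
and analogously with $\mc{X}=\mc{B}^T$, $\mc{Y}=\mc{A}$,
\[
tr(\mc{B}^T\n\mc{A}) = tr(\mc{A}\n\mc{B}^T).
\]
So the four quantities split into two pairs that are internally equal; the remaining job is to bridge one pair to the other.

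To bridge them I would use the transpose trick. For real tensors the entries of $(\mc{X}\n\mc{Y})^T$ are obtained by swapping the two outer index groups of the Einstein product, which is a routine unwinding of the definition and yields $(\mc{A}^T\n\mc{B})^T=\mc{B}^T\n\mc{A}$ (this is the tensor analogue of $(A^TB)^T=B^TA$, and the paper has already been using such identities freely). Combined with $tr(\mc{C})=tr(\mc{C}^T)$, which follows instantly from Definition \ref{Tracedefn} since the diagonal entries $c_{i_1\cdots i_N\, i_1\cdots i_N}$ are unchanged by transposition of the two outer index blocks, this gives
\[
tr(\mc{A}^T\n\mc{B}) = tr\bigl((\mc{A}^T\n\mc{B})^T\bigr) = tr(\mc{B}^T\n\mc{A}),
\]
which is exactly the missing link. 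Stringing the three equalities together in the order \emph{cyclic, transpose-bridge, cyclic} yields all four expressions equal.

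There is really no serious obstacle here; the only small point of care is to justify $(\mc{A}^T\n\mc{B})^T=\mc{B}^T\n\mc{A}$ cleanly, either by a one-line index computation or by appealing to the already-used convention for transposing an Einstein product. Nothing in the argument requires the tensors to be invertible, Hermitian, or to have any Moore--Penrose structure, so this lemma is essentially a bookkeeping consequence of Lemma \ref{traceABBA} and the definition of transpose.
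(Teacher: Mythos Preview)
Your proposal is correct and mirrors exactly what the paper does: the paper simply states that the lemma follows from $tr(\mc{A})=tr(\mc{A}^T)$ together with Lemma \ref{traceABBA}, without writing out a formal proof. Your write-up just makes explicit the one extra routine step the paper leaves implicit, namely $(\mc{A}^T\n\mc{B})^T=\mc{B}^T\n\mc{A}$, which is what lets the transpose-of-trace identity connect the two cyclic pairs.
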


Next result deals with the trace of the the Kronecker product of
tensors. We  first recall the definition of the Kronecker product of
tensors. The Kronecker product $\mc{A}\otimes\mc{B} $ (\cite{sun})
of $\mc{A} \in \mathbb{C}^{I_1\times\cdots\times I_N \times J_1
\times\cdots\times J_N}$
 and $\mc{B} \in \mathbb{C}^{K_1\times\cdots\times K_M \times L_1 \times\cdots\times L_M}$ is defined as $\mc{A}\otimes\mc{B}= (a_{i_1...i_Nj_1...j_N}\mc{B}).$
The result below shows that the trace of the Kronecker product of
two tensors is the product of the traces of the individuals.
\begin{theorem}
Let $\mc{A} =(a_{{i_1}...{i_M}{i_1}...{i_M}}) \in
\mathbb{C}^{I_1\times\cdots\times I_M \times I_1\times\cdots\times
I_M}$ and $\mc{B} =(b_{{j_1}...{j_N}{j_1}...{j_N}}) \in
\mathbb{C}^{J_1\times\cdots\times J_N \times J_1\times\cdots\times
J_N}$ be two tensors. Then $tr(\mc{A}\otimes\mc{B}) =
tr(\mc{A})tr(\mc{B}).$
\end{theorem}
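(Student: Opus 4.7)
The plan is a direct unfolding of definitions; no nontrivial identities are required. First, I would write the general entry of $\mc{A}\otimes\mc{B}$ explicitly. From the Kronecker product formula $\mc{A}\otimes\mc{B} = (a_{i_1\ldots i_N j_1\ldots j_N}\mc{B})$ applied to our $\mc{A}$ and $\mc{B}$, the entries factor multiplicatively:
$$(\mc{A}\otimes\mc{B})_{i_1\ldots i_M\,i'_1\ldots i'_M\,j_1\ldots j_N\,j'_1\ldots j'_N} = a_{i_1\ldots i_M\,i'_1\ldots i'_M}\,b_{j_1\ldots j_N\,j'_1\ldots j'_N}.$$

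Next, I would apply Definition \ref{Tracedefn} to the tensor $\mc{A}\otimes\mc{B}$. Its diagonal entries are precisely those for which the two halves of the $\mc{A}$-indices agree (i.e. $i_k = i'_k$ for every $k$) and simultaneously the two halves of the $\mc{B}$-indices agree (i.e. $j_l = j'_l$ for every $l$). Summing over these diagonal entries yields
$$tr(\mc{A}\otimes\mc{B}) = \sum_{i_1,\ldots,i_M}\;\sum_{j_1,\ldots,j_N} a_{i_1\ldots i_M\,i_1\ldots i_M}\,b_{j_1\ldots j_N\,j_1\ldots j_N}.$$

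Finally, because the summand separates into a factor depending only on the $i$-indices and a factor depending only on the $j$-indices, the double sum factors as a product of independent single sums:
$$tr(\mc{A}\otimes\mc{B}) = \Bigl(\sum_{i_1,\ldots,i_M} a_{i_1\ldots i_M\,i_1\ldots i_M}\Bigr)\Bigl(\sum_{j_1,\ldots,j_N} b_{j_1\ldots j_N\,j_1\ldots j_N}\Bigr) = tr(\mc{A})\,tr(\mc{B}).$$

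There is no substantive obstacle here; the only place needing care is clean bookkeeping of the four distinct blocks of indices so that the identification of the diagonal and the separation of variables in the sum remain unambiguous.
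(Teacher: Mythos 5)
Your proof is correct and follows essentially the same route as the paper's: both unfold the definition of the Kronecker product, observe that the diagonal entries of $\mc{A}\otimes\mc{B}$ are exactly the products $a_{i_1\ldots i_M i_1\ldots i_M}\,b_{j_1\ldots j_N j_1\ldots j_N}$, and factor the resulting double sum. The only difference is presentational — you write out the general entry and its four index blocks explicitly, while the paper argues at the level of diagonal blocks $a_{i_1\ldots i_M i_1\ldots i_M}\mc{B}$ — but the computation is identical.
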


\begin{proof}
The diagonal elements of $(\mc{A}\otimes\mc{B})$ are
$\left(a_{i_1...i_Mi_1...i_M}\mc{B}\right)$ for ${1\leq i_j \leq
I_j} (j=1, \cdots,M)$. Further, the diagonal elements of $\mc{B}$
are  $b_{i_1...i_Ni_1...i_N}$ for ${1\leq i_j \leq I_j} (j=1,
\cdots,N)$. Hence the sum of the diagonal elements of
$\mc{A}\otimes\mc{B}$ is $\left(\displaystyle\sum_{i_1 \cdots
i_M}a_{{i_1...i_M}{i_1...i_M}}\right) \left( \displaystyle\sum_{i_1
\cdots i_N}b_{{i_1...i_N}{i_1...i_N}}\right)= tr(\mc{A})tr(\mc{B})$.
Hence $tr(\mc{A}\otimes\mc{B}) = tr(\mc{A})tr(\mc{B})$.
\end{proof}

We next state that
 the trace of the Einstein product of three tensors is invariant under cyclic permutations, and the proof follows from Lemma \ref{traceABBA}.

\begin{lemma}
Let  $\mc{A} \in \mathbb{C}^{I_1\times\cdots\times I_N \times
I_1\times\cdots\times I_N}$, $\mc{B} \in
\mathbb{C}^{I_1\times\cdots\times I_N \times I_1\times\cdots\times
I_N}$ and  $\mc{C} \in \mathbb{C}^{I_1\times\cdots\times I_N \times
I_1\times\cdots\times I_N}$. Then
$$tr(\mc{A}\n\mc{B}\n\mc{C}) = tr(\mc{B}\n\mc{C}\n\mc{A}) = tr(\mc{C}\n\mc{A}\n\mc{B}).$$
\end{lemma}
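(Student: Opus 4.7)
The plan is to derive this identity as a direct consequence of Lemma \ref{traceABBA}, which already handles the two-tensor swap $tr(\mc{A}\m\mc{B}) = tr(\mc{B}\n\mc{A})$. The key observation is that since all three tensors live in $\mathbb{C}^{I_1\times\cdots\times I_N \times I_1\times\cdots\times I_N}$, any two of them may be grouped together by associativity of the Einstein product (noted in the introduction) and treated as a single tensor of compatible shape, at which point the earlier two-tensor swap applies.

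First, I would set $\mc{X} = \mc{A}\n\mc{B}$, viewed as a single tensor in $\mathbb{C}^{I_1\times\cdots\times I_N \times I_1\times\cdots\times I_N}$, and $\mc{Y}=\mc{C}$. Applying Lemma \ref{traceABBA} with these two tensors (both dimensions match the required format, so the roles of $\n$ and $\m$ collapse) yields
\[
tr\bigl((\mc{A}\n\mc{B})\n\mc{C}\bigr) = tr\bigl(\mc{C}\n(\mc{A}\n\mc{B})\bigr).
\]
By associativity of the Einstein product, the right-hand side equals $tr(\mc{C}\n\mc{A}\n\mc{B})$, giving the second equality of the statement.

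Next, I would repeat the same maneuver with $\mc{X}=\mc{A}$ and $\mc{Y}=\mc{B}\n\mc{C}$ to obtain
\[
tr\bigl(\mc{A}\n(\mc{B}\n\mc{C})\bigr) = tr\bigl((\mc{B}\n\mc{C})\n\mc{A}\bigr),
\]
which, again by associativity, rewrites as $tr(\mc{A}\n\mc{B}\n\mc{C}) = tr(\mc{B}\n\mc{C}\n\mc{A})$. Chaining the two derived equalities produces the full cyclic identity.

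There is no real obstacle here: the only point worth a sentence of care is checking that the hypotheses of Lemma \ref{traceABBA} are satisfied when we regroup, i.e.\ that the grouped tensor $\mc{A}\n\mc{B}$ (respectively $\mc{B}\n\mc{C}$) sits in a space of the form $\mathbb{C}^{I_1\times\cdots\times I_N \times I_1\times\cdots\times I_N}$, so the ``$\m$'' swap specialises to ``$\n$''. This is immediate from the square-shape assumption on $\mc{A},\mc{B},\mc{C}$, so the proof is essentially a two-line computation.
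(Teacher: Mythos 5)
Your proof is correct and is exactly the route the paper intends: the text states only that ``the proof follows from Lemma \ref{traceABBA}'', and your grouping argument (treating $\mc{A}\n\mc{B}$ or $\mc{B}\n\mc{C}$ as a single tensor and invoking the two-tensor swap together with associativity) is the natural way to fill in that one-line claim. No gaps; your added remark about checking that the grouped tensor has the right square shape is a sensible bit of care the paper omits.
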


Note that arbitrary permutations are not allowed. We next produce an
example which shows this fact.
\begin{example}
Consider tensors $\mc{A} = (a_{ijkl})_{1 \leq i,j,k,l \leq 2}, ~
\mc{B} = (b_{ijkl})_{1 \leq i,j,k,l \leq 2} $ and \\$\mc{C} =
(c_{ijkl})_{1 \leq i,j,k,l \leq 2}  \in \mathbb{R}^{2\times 2\times
2\times 2}$   such that
\begin{eqnarray*}
a_{ij11} =
    \begin{pmatrix}
    0 & 0\\
    1 & 2\\
    \end{pmatrix},~
a_{ij21} =
    \begin{pmatrix}
    1 & 2\\
    -1 & 0\\
    \end{pmatrix},~
a_{ij12} =
    \begin{pmatrix}
    1 & 3\\
    2 & 1\\
    \end{pmatrix} ~~and~~
a_{ij22} =
    \begin{pmatrix}
    0 & 0\\
    0 & 0\\
    \end{pmatrix},
\end{eqnarray*}

\begin{eqnarray*}
b_{ij11} =
    \begin{pmatrix}
    0 & 0\\
    0 & -1\\
    \end{pmatrix},~
b_{ij21} =
    \begin{pmatrix}
    0 & 0\\
    0 & 0\\
    \end{pmatrix},~
b_{ij12} =
    \begin{pmatrix}
    0 & 0\\
    0 & 1\\
    \end{pmatrix} ~~and~~
b_{ij22} =
    \begin{pmatrix}
    0 & 0\\
    0 & 1\\
    \end{pmatrix},
\end{eqnarray*}
and
\begin{eqnarray*}
c_{ij11} =
    \begin{pmatrix}
    1 & -1\\
    2 & 1\\
    \end{pmatrix},~
c_{ij21} =
    \begin{pmatrix}
    1 & 1\\
    1 & 2\\
    \end{pmatrix},~
c_{ij12} =
    \begin{pmatrix}
    0 & 0\\
    0 & 1\\
    \end{pmatrix} ~~and~~
c_{ij22} =
    \begin{pmatrix}
    1 & 3\\
    1 & 2\\
    \end{pmatrix}.
\end{eqnarray*}
Then $\mc{A}\n \mc{B}\n\mc{C}= \mc{O}$,  $ \mc{C}\n \mc{B}\n\mc{A} =
(x_{ijkl})_{1 \leq i,j,k,l \leq 2}  \in \mathbb{R}^{2\times 2\times
2\times 2}$, and $ \mc{B}\n \mc{A}\n\mc{C}\\ = (y_{ijkl})_{1 \leq
i,j,k,l \leq 2}  \in \mathbb{R}^{2\times 2\times 2\times 2}$,  where
\begin{eqnarray*}
x_{ij11} =
    \begin{pmatrix}
    2 & 6\\
    2 & 4\\
    \end{pmatrix},~
x_{ij21} =
\begin{pmatrix}
    1 & 3\\
    1 & 2\\
    \end{pmatrix},~
x_{ij12} =
\begin{pmatrix}
    3 & 9\\
    3 & 6\\
    \end{pmatrix}
     ~~and~~
     x_{ij22} =
    \begin{pmatrix}
    0 & 0\\
    0 & 0\\
    \end{pmatrix}
\end{eqnarray*}
and
\begin{eqnarray*}
y_{ij11} =
    \begin{pmatrix}
    0 & 0\\
    0 & 1\\
    \end{pmatrix},~
y_{ij21} =
    \begin{pmatrix}
    0 & 0\\
    0 & 6\\
    \end{pmatrix},~
y_{ij12} =
    \begin{pmatrix}
    0 & 0\\
    0 & 0\\
    \end{pmatrix} ~~and~~
y_{ij22} =
    \begin{pmatrix}
    0 & 0\\
    0 & 12\\
    \end{pmatrix}.
\end{eqnarray*}
Hence $tr(\mc{A}\n \mc{B}\n\mc{C}) =  0$ but $tr(\mc{C}\n
\mc{B}\n\mc{A}) = tr(\mc{B}\n \mc{A}\n\mc{C}) = 12$.
\end{example}

 To prove the next result, we
define an inner product  of tensors as
\begin{equation*}
\left< \mc{A}, \mc{B}\right> = tr(\mc{A}^*\n\mc{B}) ~~~
\textnormal{for}~~~\mc{A}, \mc{B} \in
\mathbb{C}^{I_1\times\cdots\times I_N\times I_1\times\cdots\times
I_N}.
\end{equation*}
Further, note that the Frobenius norm $||.||_F$  is defined
(\cite{BraliNT13})  as
\begin{equation*}
||\mc{A}||_F = \left( \displaystyle \displaystyle\sum_{i_1...i_N}
|a_{{i_1...i_N}{i_1...i_N}}|^2\right)^{\frac{1}{2}}~~~
\textnormal{for}~~~\mc{A} \in \mathbb{C}^{I_1\times\cdots\times
I_N\times I_1\times\cdots\times I_N}.
\end{equation*}
Thus,   $||\mc{A}||_F^2 = tr(\mc{A}^*\n\mc{A})$. Using the above
definitions, we now prove another result on trace.
\begin{theorem}
Let $\mc{A}\in \mathbb{C}^{I_1\times\cdots\times I_N \times
J_1\times\cdots\times J_N}$ and  $\mc{B} \in
\mathbb{C}^{I_1\times\cdots\times I_N \times J_1\times\cdots\times
J_N}$ be two tensors. Then
$$|tr(\mc{A}^*\n\mc{B})|^2 \leq tr(\mc{A}^*\n\mc{A}) tr(\mc{B}^*\n\mc{B}).$$
\end{theorem}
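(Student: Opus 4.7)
The plan is to prove a Cauchy--Schwarz type inequality for the inner product $\langle \mc{A}, \mc{B}\rangle = tr(\mc{A}^{*}\n\mc{B})$ defined just before the statement. I will follow the standard complex-vector-space argument by examining a non-negative quadratic expression in a scalar parameter $\lambda$.

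First, I would dispose of the trivial case $\mc{B} = \mc{O}$: then both sides of the inequality are $0$, since $tr(\mc{B}^{*}\n\mc{B}) = 0$. Otherwise, $tr(\mc{B}^{*}\n\mc{B}) = ||\mc{B}||_F^2 > 0$, where strict positivity is guaranteed by Lemma \ref{lm1} (which forces $\mc{B} = \mc{O}$ whenever $\mc{B}^{*}\n\mc{B} = \mc{O}$) combined with the fact that $tr(\mc{B}^{*}\n\mc{B})$ equals a sum of squared moduli of entries of $\mc{B}$. For any $\lambda \in \mathbb{C}$, I would start from
\[
0 \le ||\mc{A} - \lambda\mc{B}||_F^2 = tr\big((\mc{A} - \lambda\mc{B})^{*}\n(\mc{A} - \lambda\mc{B})\big),
\]
use $(\mc{A} - \lambda\mc{B})^{*} = \mc{A}^{*} - \overline{\lambda}\mc{B}^{*}$ together with distributivity of the Einstein product, and then apply the linearity of the trace (Lemma \ref{traceproduct}) to expand this as
\[
0 \le tr(\mc{A}^{*}\n\mc{A}) - \lambda\, tr(\mc{A}^{*}\n\mc{B}) - \overline{\lambda}\, tr(\mc{B}^{*}\n\mc{A}) + |\lambda|^{2}\, tr(\mc{B}^{*}\n\mc{B}).
\]

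The key identity I will invoke is the conjugate symmetry $tr(\mc{B}^{*}\n\mc{A}) = \overline{tr(\mc{A}^{*}\n\mc{B})}$. This follows by taking the conjugate transpose of $\mc{A}^{*}\n\mc{B}$, which yields $\mc{B}^{*}\n\mc{A}$, and applying the earlier trace identity $tr(\mc{X}^{*}) = \overline{tr(\mc{X})}$. Substituting the minimizing choice $\lambda = tr(\mc{B}^{*}\n\mc{A})/tr(\mc{B}^{*}\n\mc{B})$ into the previous display collapses the right-hand side to
\[
tr(\mc{A}^{*}\n\mc{A}) - \frac{|tr(\mc{A}^{*}\n\mc{B})|^{2}}{tr(\mc{B}^{*}\n\mc{B})} \ge 0,
\]
and multiplying through by the positive number $tr(\mc{B}^{*}\n\mc{B})$ delivers the asserted inequality.

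The main obstacle is nothing deep, but rather careful bookkeeping of adjoints and trace linearity in the Einstein-product setting. I expect the only slightly subtle step to be the verification of the conjugate-symmetry identity $tr(\mc{B}^{*}\n\mc{A}) = \overline{tr(\mc{A}^{*}\n\mc{B})}$; once that is in hand, the remainder is a direct transcription of the classical matrix Cauchy--Schwarz argument. The index dimensions match throughout: since $\mc{A}$ and $\mc{B}$ both lie in $\mathbb{C}^{I_1\times\cdots\times I_N\times J_1\times\cdots\times J_N}$, each of $\mc{A}^{*}\n\mc{A}$, $\mc{A}^{*}\n\mc{B}$, $\mc{B}^{*}\n\mc{A}$ and $\mc{B}^{*}\n\mc{B}$ is a square tensor in $\mathbb{C}^{J_1\times\cdots\times J_N\times J_1\times\cdots\times J_N}$, so all four traces are well defined.
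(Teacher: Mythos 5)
Your proposal is correct and follows essentially the same route as the paper: both are the standard Cauchy--Schwarz argument for the inner product $\langle\mc{A},\mc{B}\rangle=tr(\mc{A}^*\n\mc{B})$, expanding the nonnegative quantity $\|\mc{A}-\lambda\mc{B}\|_F^2$ at the optimal scalar (the paper equivalently picks $\alpha$ so that $\alpha\mc{A}-\mc{B}$ is orthogonal to $\mc{A}$). Your version is slightly more careful in handling the degenerate case and in spelling out the conjugate-symmetry identity $tr(\mc{B}^*\n\mc{A})=\overline{tr(\mc{A}^*\n\mc{B})}$, which the paper uses implicitly.
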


\begin{proof}
Without loss of generality,  consider $\mc{A} \neq \mc{O}$ and  let
$\alpha = \dfrac{\left< \mc{B}, \mc{A}\right>}{||\mc{A}||^2_F}.$
Then $\left< \alpha \mc{A}-\mc{B}, \mc{A}\right> =\mc{O}$. So
$||\alpha \mc{A}-\mc{B}||^2_F = \left< \alpha \mc{A}-\mc{B},  \alpha
\mc{A}-\mc{B}\right>
= \overline{\alpha} \left< \alpha \mc{A}-\mc{B}, \mc{A}\right> -
\left< \alpha \mc{A}, \mc{B}\right> + \left< \mc{B}, \mc{B}\right>
=\left< \mc{B}, \mc{B}\right> - \left< \alpha \mc{A},
\mc{B}\right>.$ Hence $\alpha \left<  \mc{A}, \mc{B}\right> \leq ||
\mc{B}||_F^2$ as $||\alpha \mc{A}-\mc{B}||^2_F \geq \mc{O}$. Hence
$|\left< \mc{A}, \mc{B}\right>|^2 \leq ||\mc{A}||_F^2||\mc{B}||_F^2$
which  implies $|tr(\mc{A}^*\n\mc{B})|^2 \leq
tr(\mc{A}^*\n\mc{A})tr(\mc{B}^*\n\mc{B})$.
\end{proof}

Since $(\mc{B}\n\mc{A} \n\mc{A}^* -
\mc{C}\n\mc{A}\n\mc{A}^*)\n(\mc{B} - \mc{C})^*=(\mc{B}\n\mc{A} -
\mc{C}\n\mc{A})\n(\mc{B}\n\mc{A} - \mc{C}\n\mc{A})^*$,  we get
$\mc{B}\n\mc{A} = \mc{C}\n\mc{A}$ by using Remark \ref{rlm1}. This
is stated in the next result.

\begin{theorem}
Let $\mc{A} \in \mathbb{C}^{I_1\times\cdots\times I_N \times
I_1\times\cdots\times I_N }, ~~\mc{B}\in
\mathbb{C}^{I_1\times\cdots\times I_N \times I_1\times\cdots\times
I_N }$ and $\mc{C} \in \mathbb{C}^{I_1\times\cdots\times I_N \times
I_1\times\cdots\times I_N }$.
\begin{itemize}
\item[(a)] If $\mc{B}\n\mc{A} \n\mc{A}^* = \mc{C}\n\mc{A}\n\mc{A}^*$, then $\mc{B}\n \mc{A} = \mc{C}\n \mc{A}$.
\item[(b)] If $\mc{B}\n\mc{A}^* \n\mc{A} = \mc{C}\n\mc{A}^*\n\mc{A}$, then $\mc{B}\n \mc{A}^* = \mc{C}\n \mc{A}^*$.
\end{itemize}
\end{theorem}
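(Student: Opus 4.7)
The plan is to follow the approach sketched by the authors in the sentence preceding the theorem: reduce the one-sided cancellation $\mc{B}\n\mc{A}\n\mc{A}^* = \mc{C}\n\mc{A}\n\mc{A}^*$ to a statement of the form $\mc{D}\n\mc{D}^* = \mc{O}$ through a single algebraic identity, and then invoke Remark \ref{rlm1}.

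For part (a), I would first verify the identity
\[
(\mc{B}\n\mc{A}\n\mc{A}^* - \mc{C}\n\mc{A}\n\mc{A}^*)\n(\mc{B} - \mc{C})^* = (\mc{B}\n\mc{A} - \mc{C}\n\mc{A})\n(\mc{B}\n\mc{A} - \mc{C}\n\mc{A})^*
\]
by expanding both sides using distributivity, associativity of the Einstein product, and the involution rule $(\mc{X}\n\mc{Y})^* = \mc{Y}^*\n\mc{X}^*$; both sides collapse to the same four-term expression
\[
\mc{B}\n\mc{A}\n\mc{A}^*\n\mc{B}^* - \mc{B}\n\mc{A}\n\mc{A}^*\n\mc{C}^* - \mc{C}\n\mc{A}\n\mc{A}^*\n\mc{B}^* + \mc{C}\n\mc{A}\n\mc{A}^*\n\mc{C}^*.
\]
The hypothesis forces the left-hand side of the identity to vanish, so the right-hand side vanishes as well. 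Setting $\mc{D} = \mc{B}\n\mc{A} - \mc{C}\n\mc{A} \in \mathbb{C}^{I_1\times\cdots\times I_N\times I_1\times\cdots\times I_N}$, I obtain $\mc{D}\n\mc{D}^* = \mc{O}$, and Remark \ref{rlm1} then yields $\mc{D} = \mc{O}$, i.e., $\mc{B}\n\mc{A} = \mc{C}\n\mc{A}$.

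Part (b) I would deduce as an immediate corollary of part (a) applied to the tensor $\mc{A}^*$ in place of $\mc{A}$. Using the fact that $(\mc{A}^*)^* = \mc{A}$, the assumption $\mc{B}\n\mc{A}^*\n\mc{A} = \mc{C}\n\mc{A}^*\n\mc{A}$ can be rewritten as $\mc{B}\n\mc{A}^*\n(\mc{A}^*)^* = \mc{C}\n\mc{A}^*\n(\mc{A}^*)^*$, and part (a) immediately gives $\mc{B}\n\mc{A}^* = \mc{C}\n\mc{A}^*$. There is no genuine obstacle in this argument: the only step that requires any computation is the four-term expansion verifying the key identity, and this is purely mechanical once associativity and the conjugate-transpose rule are in hand.
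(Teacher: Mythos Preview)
Your proposal is correct and follows exactly the approach the paper itself uses: the authors' entire proof is the one-line observation preceding the theorem, namely the identity $(\mc{B}\n\mc{A}\n\mc{A}^* - \mc{C}\n\mc{A}\n\mc{A}^*)\n(\mc{B} - \mc{C})^* = (\mc{B}\n\mc{A} - \mc{C}\n\mc{A})\n(\mc{B}\n\mc{A} - \mc{C}\n\mc{A})^*$ combined with Remark~\ref{rlm1}. Your treatment of part~(b) by substituting $\mc{A}^*$ for $\mc{A}$ is the natural completion, and nothing further is needed.
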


Sufficient conditions for the Moore-Penrose inverse of the sum of
tensors to be the sum of the Moore-Penrose inverse of the individual
tensors is obtained next.

\begin{theorem}
Let $\mc{A}_i\in \mathbb{C}^{I_1\times\cdots\times I_N \times
I_1\times\cdots\times I_N }$, for all $i$. If $\mc{A}=\sum
\mc{A}_i$, where $\mc{A}_i\n\mc{A}_j^*=\mc{O}$ and $\mc{A}_i^* \n
\mc{A}_j=\mc{O}$,
 whenever $ i\neq j$, then $\mc{A}^{\dg} = \sum \mc{A}_i^{\dg}$.
\end{theorem}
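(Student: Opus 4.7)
The plan is to set $\mc{X} = \sum_i \mc{A}_i^{\dg}$ and verify that $\mc{X}$ satisfies the four defining equations of Definition \ref{defmpi} with respect to $\mc{A} = \sum_i \mc{A}_i$; by the uniqueness of the Moore-Penrose inverse (Theorem 3.2 of \cite{sun}), this will force $\mc{A}^{\dg} = \mc{X}$.

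The key preliminary observation is that the given orthogonality conditions on $\mc{A}_i, \mc{A}_j^*$ lift to analogous orthogonality conditions between $\mc{A}_i$ and $\mc{A}_j^{\dg}$. For $i\neq j$, using Lemma \ref{revA}(c) in the form $\mc{A}_j^{\dg} = \mc{A}_j^*\n(\mc{A}_j\n\mc{A}_j^*)^{\dg}$ we immediately get $\mc{A}_i\n\mc{A}_j^{\dg} = \mc{A}_i\n\mc{A}_j^*\n(\mc{A}_j\n\mc{A}_j^*)^{\dg} = \mc{O}$; using the other form $\mc{A}_j^{\dg} = (\mc{A}_j^*\n\mc{A}_j)^{\dg}\n\mc{A}_j^*$ we obtain $\mc{A}_j^{\dg}\n\mc{A}_i = (\mc{A}_j^*\n\mc{A}_j)^{\dg}\n\mc{A}_j^*\n\mc{A}_i = \mc{O}$.

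Once these cross-vanishing relations are in hand, the four verifications are largely mechanical. For equation (1), expand
$$\mc{A}\n\mc{X}\n\mc{A} = \sum_{i,j,k}\mc{A}_i\n\mc{A}_j^{\dg}\n\mc{A}_k.$$
When $i\neq j$ the factor $\mc{A}_i\n\mc{A}_j^{\dg}$ vanishes, and when $j\neq k$ the factor $\mc{A}_j^{\dg}\n\mc{A}_k$ vanishes, so only the terms with $i=j=k$ survive, giving $\sum_i \mc{A}_i\n\mc{A}_i^{\dg}\n\mc{A}_i = \sum_i\mc{A}_i = \mc{A}$. Equation (2) follows by the identical telescoping argument applied to $\mc{X}\n\mc{A}\n\mc{X}$. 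For equations (3) and (4), the same cancellations yield $\mc{A}\n\mc{X} = \sum_i \mc{A}_i\n\mc{A}_i^{\dg}$ and $\mc{X}\n\mc{A} = \sum_i\mc{A}_i^{\dg}\n\mc{A}_i$, each being a finite sum of Hermitian tensors (since each $\mc{A}_i\n\mc{A}_i^{\dg}$ and $\mc{A}_i^{\dg}\n\mc{A}_i$ is Hermitian by the Moore-Penrose equations for $\mc{A}_i$), and the Hermitian property is preserved under addition.

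There is no genuine obstacle here; the only subtle point is the opening reduction, where one must choose the appropriate representation of $\mc{A}_j^{\dg}$ from Lemma \ref{revA}(c) so that the hypothesis $\mc{A}_i\n\mc{A}_j^* = \mc{O}$ (respectively $\mc{A}_j^*\n\mc{A}_i = \mc{O}$) can be absorbed on the correct side. Once that bookkeeping is settled, the cross terms simply disappear and everything collapses onto the diagonal index.
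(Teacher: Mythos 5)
Your proposal is correct and follows essentially the same route as the paper: both establish the cross-vanishing relations $\mc{A}_i\n\mc{A}_j^{\dg}=\mc{O}$ and $\mc{A}_i^{\dg}\n\mc{A}_j=\mc{O}$ for $i\neq j$ (you via Lemma \ref{revA}(c), the paper via an equivalent rewriting of $\mc{A}_j^{\dg}$ from the Penrose equations) and then verify the four defining equations for $\sum_i\mc{A}_i^{\dg}$, with only the diagonal terms surviving.
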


\begin{proof}
Suppose that $\mc{A}=\sum \mc{A}_i$, where
$\mc{A}_i\n\mc{A}_j^*=\mc{O}$ and $\mc{A}_i^* \n \mc{A}_j=\mc{O}$,
whenever $ i\neq j$.
By using Definition \ref{defmpi}, we get $\mc{A}_i\n \mc{A}_j^{\dg} = \mc{A}_i\n\mc{A}_j^*\n(\mc{A}_j^{\dg})^* \n\mc{A}_j^{\dg}$ and $\mc{A}_i^{\dg} \n \mc{A}_j  = \mc{A}_i^{\dg} \n(\mc{A}_i^{\dg})^* \n \mc{A}_i^* \n\mc{A}_j$, which imply $\mc{A}_i\n \mc{A}_j^{\dg} = \mc{O}$ and $\mc{A}_i^{\dg} \n \mc{A}_j  = \mc{O}$ , whenever $ i \neq j $.\\
Let $\mc{B}= \sum \mc{A}_i^{\dg}$. Since $\mc{A}_i\n \mc{A}_j^{\dg}
= \mc{O}$ for $ i\neq j $, we obtain $\mc{A}\n\mc{B}\n\mc{A} = \sum
\mc{A}_i \n \mc{A}_i^{\dg} \n\mc{A}_i$, i.e.,
$\mc{A}\n\mc{B}\n\mc{A} = \mc{A}$.
 Again, the fact $\mc{A}_i^{\dg} \n \mc{A}_j  = \mc{O}$ for $ i\neq j $ gives $\mc{B}\n\mc{A}\n\mc{B} = \sum \mc{A}_i^{\dg} \n \mc{A}_i \n \mc{A}_i^{\dg}$, which yields  $\mc{B}\n\mc{A}\n\mc{B} = \mc{B}$.
 As $\mc{A}_i\n \mc{A}_j^{\dg} = \mc{O}$ for $ i\neq j $, we also have  $(\mc{A}\n\mc{B})^*= \sum \left(\mc{A}_i\n\mc{A}_i^{\dg}\right)^*$, i.e., $(\mc{A}\n\mc{B})^* = \mc{A}\n\mc{B} $,
 and since $\mc{A}_i^{\dg} \n \mc{A}_j  = \mc{O}$ for $ i\neq j $, we get $\left(\mc{B}\n\mc{A}\right)^* = \sum \left({\mc{A}_i^{\dg} \n \mc{A}_i}\right)^*$, i.e., $\left(\mc{B}\n\mc{A}\right)^* = \mc{B}\n\mc{A}$.
Thus, $ \mc{B}= \mc{A}^{\dg} = \sum \mc{A}_i^{\dg} .$
\end{proof}

\begin{lemma}\label{lma 3.8}
Let $\mc{A} \in \mathbb{C}^{I_1\times\cdots\times I_N \times
I_1\times\cdots\times I_N }$ \text{ and } $\mc{B}\in
\mathbb{C}^{I_1\times\cdots\times I_N \times I_1\times\cdots\times
I_N }$. If $\mc{B}$ is invertible, then
\begin{itemize}
\item[(a)]  $(\mc{B}\n \mc{A})^{\dg} \n \mc{B}\n\mc{A}$ = $\mc{A}^{\dg}\n\mc{A}$;
\item[(b)] $\mc{A}\n\mc{B}\n(\mc{A}\n \mc{B})^{\dg} $ = $\mc{A}\n\mc{A}^{\dg}$.
\end{itemize}
\end{lemma}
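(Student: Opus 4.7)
The plan is to recognize both sides of each claimed identity as Hermitian idempotent tensors (the tensor analogues of orthogonal projectors) and to show they coincide by a short sandwich argument that leverages invertibility of $\mc{B}$ to cancel it wherever it appears. This keeps the proof purely at the level of the four defining axioms in Definition \ref{defmpi} and avoids any appeal to the SVD or to the identities of Lemma \ref{revA}.

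For part (a), I would introduce the two candidate projectors $\mc{P}=(\mc{B}\n\mc{A})^{\dg}\n(\mc{B}\n\mc{A})$ and $\mc{Q}=\mc{A}^{\dg}\n\mc{A}$. Axiom (4) makes both $\mc{P}$ and $\mc{Q}$ Hermitian, and axiom (1) gives $\mc{A}\n\mc{Q}=\mc{A}$ directly, while $(\mc{B}\n\mc{A})\n\mc{P}=\mc{B}\n\mc{A}$ yields $\mc{A}\n\mc{P}=\mc{A}$ after pre-multiplying by $\mc{B}^{-1}$. The core of the argument is then the two one-line computations $\mc{Q}\n\mc{P}=\mc{A}^{\dg}\n(\mc{A}\n\mc{P})=\mc{A}^{\dg}\n\mc{A}=\mc{Q}$ and $\mc{P}\n\mc{Q}=(\mc{B}\n\mc{A})^{\dg}\n\mc{B}\n(\mc{A}\n\mc{A}^{\dg}\n\mc{A})=\mc{P}$. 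Because $\mc{P}$ and $\mc{Q}$ are Hermitian, taking the conjugate transpose of $\mc{P}\n\mc{Q}=\mc{P}$ gives $\mc{Q}\n\mc{P}=\mc{P}$, which combined with $\mc{Q}\n\mc{P}=\mc{Q}$ forces $\mc{P}=\mc{Q}$, i.e. the desired identity.

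Part (b) goes by the completely symmetric argument with $\mc{P}=(\mc{A}\n\mc{B})\n(\mc{A}\n\mc{B})^{\dg}$ and $\mc{Q}=\mc{A}\n\mc{A}^{\dg}$, using axiom (3) for Hermicity and post-multiplying $\mc{P}\n(\mc{A}\n\mc{B})=\mc{A}\n\mc{B}$ by $\mc{B}^{-1}$ to obtain $\mc{P}\n\mc{A}=\mc{A}$; the same cross-multiplication and Hermitian-transpose trick then closes the argument. I do not foresee any real obstacle: the only subtle point is recognizing that invertibility of $\mc{B}$ is precisely what allows the Moore-Penrose projector attached to $\mc{B}\n\mc{A}$ to collapse down to the one attached to $\mc{A}$, and this is delivered by a single cancellation step in each part.
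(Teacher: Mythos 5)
Your argument is correct, and it closes the proof by a slightly different mechanism than the paper does. Both proofs open with the same key move: apply axiom (1) to $\mc{B}\n\mc{A}$ and cancel the invertible $\mc{B}$ to get $\mc{A}\n(\mc{B}\n\mc{A})^{\dg}\n\mc{B}\n\mc{A}=\mc{A}$. From there the paper forms $\mc{X}=\bigl(\mc{I}-(\mc{B}\n\mc{A})^{\dg}\n\mc{B}\n\mc{A}\bigr)\n\mc{A}^{\dg}\n\mc{A}$, shows $\mc{X}^*\n\mc{X}=\mc{O}$ using the Hermitian idempotent structure, and invokes its trace-based Lemma \ref{lm1} to conclude $\mc{X}=\mc{O}$, which unwinds to the claim. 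You instead stay entirely inside the four axioms: writing $\mc{P}=(\mc{B}\n\mc{A})^{\dg}\n\mc{B}\n\mc{A}$ and $\mc{Q}=\mc{A}^{\dg}\n\mc{A}$, your relations $\mc{Q}\n\mc{P}=\mc{Q}$ and $\mc{P}\n\mc{Q}=\mc{P}$ check out (the latter needing only $\mc{A}\n\mc{A}^{\dg}\n\mc{A}=\mc{A}$, not invertibility), and taking adjoints of the second, using Hermiticity of $\mc{P}$ and $\mc{Q}$, forces $\mc{P}=\mc{Q}$. This is the classical ``two Hermitian idempotents each absorbing the other are equal'' argument; it buys you independence from the trace machinery of Lemma \ref{lm1}, at the cost of nothing. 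Part (b) is symmetric in both treatments. No gaps.
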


\begin{proof}

(a) We have  $\mc{B^{\dg}}$=$\mc{B}^{-1}$ as $\mc{B}$ is invertible.
By using Definition \ref{defmpi}, we obtain
$$\mc{B}\n\mc{A}\n(\mc{B}\n\mc{A})^{\dg}\n\mc{B}\n\mc{A} = \mc{B}\n\mc{A},$$
 which on pre-multiplying $\mc{B}^{-1}$ yields $$\mc{A}\n\big(\mc{I}-(\mc{B}\n\mc{A})^{\dg} \n \mc{B}\n\mc{A}\big)\n\mc{A^{\dg}}\n\mc{A}  = \mc{O}.$$
Again, pre-multiplying $\mc{A}^{\dg}$ to the above equality, and using the fact that $\mc{I} - (\mc{B}\n\mc{A})^{\dg} \n \mc{B}\n\mc{A}$ is both idempotent and Hermitian, we obtain $ (\mc{B}\n\mc{A})^{\dg} \n\mc{B} \n\mc{A} = \mc{A}^{\dg} \n\mc{A}$.\\
$(b)$ Proceeding as in $(a)$, one can  have
$\mc{A}\n\mc{B}\n(\mc{A}\n\mc{B})^{\dg} =\mc{A}\n\mc{A}^{\dg}$.
\end{proof}

 The commutativity of $\mc{A}^{\dg}\n\mc{A}$ and $\mc{B}\n\mc{B}^*$ is shown below  under the assumption of a sufficient condition.

\begin{lemma}\label{lm3.13}
Let $\mc{A} \in \mathbb{C}^{I_1\times\cdots\times I_N \times
I_1\times\cdots\times I_N }$ and $\mc{B}\in
\mathbb{C}^{I_1\times\cdots\times I_N \times I_1\times\cdots\times
I_N }$. If $\mc{B}\n\mc{B}^* \n\mc{A}^*=\mc{A}^{\dg} \n \mc{A} \n
\mc{B} \n\mc{B}^* \n\mc{A}^*$, then
$\mc{A}^{\dg}\n\mc{A}\n\mc{B}\n\mc{B}^*=\mc{B}\n\mc{B}^*\n\mc{A}^{\dg}
\n\mc{A}$.
\end{lemma}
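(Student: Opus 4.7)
Write $P=\mc{A}^{\dg}\n\mc{A}$ and $\mc{C}=\mc{B}\n\mc{B}^{*}$. By Definition \ref{defmpi}(4), $P$ is Hermitian and idempotent, and $\mc{C}$ is visibly Hermitian. In this notation the hypothesis reads
\begin{equation*}
\mc{C}\n\mc{A}^{*}=P\n\mc{C}\n\mc{A}^{*}, \tag{H}
\end{equation*}
and our target is $P\n\mc{C}=\mc{C}\n P$. The plan is to derive the two one-sided identities $\mc{C}\n P=P\n\mc{C}\n P$ and $P\n\mc{C}=P\n\mc{C}\n P$ and then compare.

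First I would post-multiply (H) by $(\mc{A}^{*})^{\dg}=(\mc{A}^{\dg})^{*}$. Using $\mc{A}^{*}\n(\mc{A}^{\dg})^{*}=(\mc{A}^{\dg}\n\mc{A})^{*}=P^{*}=P$, this immediately yields
\begin{equation*}
\mc{C}\n P=P\n\mc{C}\n P. \tag{1}
\end{equation*}

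Next I would take the conjugate transpose of (H). Since $\mc{C}$ and $P$ are Hermitian, $(P\n\mc{C}\n\mc{A}^{*})^{*}=\mc{A}\n\mc{C}\n P$ and $(\mc{C}\n\mc{A}^{*})^{*}=\mc{A}\n\mc{C}$, giving
\begin{equation*}
\mc{A}\n\mc{C}=\mc{A}\n\mc{C}\n P.
\end{equation*}
Pre-multiplying this by $\mc{A}^{\dg}$ produces $P\n\mc{C}=P\n\mc{C}\n P$, that is,
\begin{equation*}
P\n\mc{C}=P\n\mc{C}\n P. \tag{2}
\end{equation*}

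Finally, the right-hand sides of (1) and (2) agree, so $\mc{C}\n P=P\n\mc{C}$, which is exactly $\mc{B}\n\mc{B}^{*}\n\mc{A}^{\dg}\n\mc{A}=\mc{A}^{\dg}\n\mc{A}\n\mc{B}\n\mc{B}^{*}$. There is no genuine obstacle here; the only thing to keep in mind is that (H) is not symmetric in $\mc{A}$ and $\mc{A}^{*}$, so one has to combine (H) with its conjugate transpose to get both one-sided identities (1) and (2) needed for the comparison.
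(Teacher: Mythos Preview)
Your proof is correct and follows essentially the same route as the paper: both post-multiply the hypothesis by $(\mc{A}^{\dg})^{*}$ to obtain $\mc{C}\n P=P\n\mc{C}\n P$, and then use that the right-hand side is Hermitian. The paper simply observes this Hermitian symmetry directly to conclude $\mc{C}\n P=(\mc{C}\n P)^{*}=P\n\mc{C}$, whereas you re-derive the transposed identity (2) from (H); note that (2) is literally the conjugate transpose of (1), so your second step could be shortened.
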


\begin{proof}
Suppose that $\mc{B}\n\mc{B}^* \n\mc{A}^*  =  \mc{A}^{\dg} \n\mc{A}
\n\mc{B}\n\mc{B}^* \n\mc{A}^*$, which on post-multiplying
$\mc{A}^{\dg*}$ gives
\begin{equation}\label{eq8}
\mc{B}\n\mc{B}^* \n\mc{A}^{\dg} \n\mc{A}  =  \mc{A}^{\dg} \n\mc{A}
\n\mc{B}\n\mc{B}^* \n\mc{A}^{\dg} \n\mc{A}.
\end{equation}
Since the right side of equation \eqref{eq8} is Hermitian, we so
have
$\mc{A}^{\dg}\n\mc{A}\n\mc{B}\n\mc{B}^*=\mc{B}\n\mc{B}^*\n\mc{A}^{\dg}
\n\mc{A}$.
\end{proof}

Equivalent conditions for the commutativity of
$\mc{A}^{\dg}\n\mc{A}$ and $\mc{B}\n\mc{B}^{\dg}$ is shown below.
\begin{lemma}
Let $\mc{A}\in \mathbb{C}^{I_1\times\cdots\times I_N \times
I_1\times\cdots\times I_N }$ and $\mc{B}\in
\mathbb{C}^{I_1\times\cdots\times I_N \times I_1\times\cdots\times
I_N }$. Then the the commutativity of $\mc{A}^{\dg}\n\mc{A}$ and
$\mc{B}\n\mc{B}^{\dg}$ is equivalent to either of the conditions
\begin{equation}\label{eq9}
\mc{A}^{\dg} \n\mc{A} \n\mc{B}
\n\mc{B}^{\dg}\n\mc{A}^*=\mc{B}\n\mc{B}^{\dg}\n \mc{A}^*
\end{equation}
and
\begin{equation}\label{eq10}
\mc{B}^{\dg} \n\mc{B} \n\mc{A} \n\mc{A}^{\dg} \n\mc{B}^*
=\mc{A}\n\mc{A}^{\dg}\n \mc{B}^* .
\end{equation}
\end{lemma}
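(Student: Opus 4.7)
The plan is to introduce the shorthands $\mc{E} := \mc{A}^{\dg}\n\mc{A}$ and $\mc{F} := \mc{B}\n\mc{B}^{\dg}$ and to observe that by Definition \ref{defmpi} both are Hermitian idempotents, so that the commutation hypothesis reads simply as $\mc{E}\n\mc{F} = \mc{F}\n\mc{E}$. The pivotal identity driving the whole argument is the one furnished by Lemma \ref{revA}(a), which in this notation becomes $\mc{E}\n\mc{A}^* = \mc{A}^{\dg}\n\mc{A}\n\mc{A}^* = \mc{A}^*$; its mirror $\mc{B}^{\dg}\n\mc{B}\n\mc{B}^* = \mc{B}^*$ will play the analogous role for \eqref{eq10}. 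Throughout, I will also freely use $(\mc{A}^*)^{\dg} = (\mc{A}^{\dg})^*$ and $\mc{A}^*\n(\mc{A}^{\dg})^* = (\mc{A}^{\dg}\n\mc{A})^* = \mc{E}$.

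For the direction ``$\mc{E}\n\mc{F} = \mc{F}\n\mc{E} \Rightarrow \eqref{eq9}$'', I would simply post-multiply the commutation by $\mc{A}^*$ and invoke $\mc{E}\n\mc{A}^* = \mc{A}^*$ to obtain $\mc{E}\n\mc{F}\n\mc{A}^* = \mc{F}\n\mc{E}\n\mc{A}^* = \mc{F}\n\mc{A}^*$. For the converse, I would post-multiply \eqref{eq9} by $(\mc{A}^*)^{\dg}$ and collapse the trailing factor via $\mc{A}^*\n(\mc{A}^{\dg})^* = \mc{E}$, producing the sandwich identity $\mc{E}\n\mc{F}\n\mc{E} = \mc{F}\n\mc{E}$. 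Taking the conjugate transpose of this identity and using the Hermiticity of $\mc{E}$ and $\mc{F}$ yields $\mc{E}\n\mc{F}\n\mc{E} = \mc{E}\n\mc{F}$; chaining the two expressions for the common middle term then delivers $\mc{E}\n\mc{F} = \mc{F}\n\mc{E}$, closing the loop. The equivalence with \eqref{eq10} is handled by the same template with the roles of $\mc{A}$ and $\mc{B}$ (and the positions of the daggers) suitably exchanged: one works with the Hermitian idempotents $\mc{A}\n\mc{A}^{\dg}$ and $\mc{B}^{\dg}\n\mc{B}$, uses $\mc{B}^{\dg}\n\mc{B}\n\mc{B}^* = \mc{B}^*$ as the pivot, and repeats the post-multiplication followed by the conjugate-transpose sandwich verbatim.

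The main obstacle is conceptual rather than computational: one has to spot the ``bidirectional collapse'' of the sandwich $\mc{E}\n\mc{F}\n\mc{E}$, which coincides with $\mc{F}\n\mc{E}$ after one post-multiplication by $(\mc{A}^{\dg})^*$ and simultaneously coincides with $\mc{E}\n\mc{F}$ after a single conjugate transpose. Once this trick is recognized, all four implications ($\Rightarrow$ and $\Leftarrow$ for each of \eqref{eq9} and \eqref{eq10}) close by one-line manipulations using only Definition \ref{defmpi} and Lemma \ref{revA}(a); no additional machinery from the preceding identities of this subsection is required.
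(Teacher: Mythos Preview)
Your proposal is correct and follows essentially the same route as the paper's proof: for \eqref{eq9} the paper also post-multiplies the commutation by $\mc{A}^*$ for the forward direction, and for the converse post-multiplies \eqref{eq9} by $(\mc{A}^{\dg})^*$ to obtain $\mc{A}^{\dg}\n\mc{A}\n\mc{B}\n\mc{B}^{\dg}\n\mc{A}^{\dg}\n\mc{A} = \mc{B}\n\mc{B}^{\dg}\n\mc{A}^{\dg}\n\mc{A}$ and then invokes Hermiticity of the left-hand side---precisely your ``bidirectional collapse of the sandwich'' $\mc{E}\n\mc{F}\n\mc{E}$. The treatment of \eqref{eq10} is likewise dispatched ``in a similar fashion'' in the paper, matching your symmetric template.
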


\begin{proof}
Clearly,    $\mc{A}^{\dg} \n \mc{A} \n \mc{B} \n \mc{B}^{\dg} \n
\mc{A}^* = \mc{B} \n \mc{B}^{\dg} \n \mc{A}^*$ as $\mc{A}^{\dg} \n
\mc{A}$ and $\mc{B}\n \mc{B}^{\dg}$ are commutative. By
post-multiplying $\mc{A}^{\dg*}$ with equation \eqref{eq9}, we have
$$\mc{A}^{\dg} \n \mc{A} \n \mc{B} \n \mc{B}^{\dg} \n \mc{A}^{\dg} \n\mc{A}  =  \mc{B} \n\mc{B}^{\dg} \n\mc{A}^{\dg} \n \mc{A}.$$
Since, left hand side of the above equality is Hermitian, we so have
$\mc{A}^{\dg} \n \mc{A}\n\mc{B} \n\mc{B}^{\dg} =\mc{B}
\n\mc{B}^{\dg}\n\mc{A}^{\dg} \n \mc{A}$.

In a similar fashion one can prove that, the commutativity of
$\mc{A}^{\dg}\n\mc{A}$ and $\mc{B}\n\mc{B}^{\dg}$ is equivalent to
equation \eqref{eq10}.
\end{proof}

\begin{lemma}
Let $\mc{A} \in \mathbb{C}^{I_1\times\cdots\times I_N \times
J_1\times\cdots\times J_N }$ and $\mc{B}\in
\mathbb{C}^{J_1\times\cdots\times J_N \times I_1\times\cdots\times
I_N }$. Then the conditions
\begin{equation}\label{eq11}
\mc{A}^{\dg} \n\mc{A} \n\mc{B} \n\mc{B}^* \n\mc{A}^*  =
\mc{B}\n\mc{B}^* \n \mc{A}^*
\end{equation}
and
\begin{equation}\label{eq12}
\mc{B} \n\mc{B}^{\dg} \n\mc{A}^* \n\mc{A} \n\mc{B}  =  \mc{A}^*
\n\mc{A}\n\mc{B}
\end{equation}
are equivalent to
\begin{equation}\label{eq13}
(\mc{I} -\mc{A}^{\dg} \n\mc{A})\n\mc{B} \n\mc{B}^* \n\mc{A}^{\dg}
\n\mc{A}  =  \mc{O}
\end{equation}
and \begin{equation}\label{eq14} (\mc{I} -\mc{B}
\n\mc{B}^{\dg})\n\mc{A}^* \n\mc{A} \n\mc{B} \n\mc{B}^{\dg} =
\mc{O}.
\end{equation}

\end{lemma}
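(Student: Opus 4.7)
The plan is to prove the two equivalences independently: equation \eqref{eq11} is equivalent to equation \eqref{eq13}, and equation \eqref{eq12} is equivalent to equation \eqref{eq14}. Both halves will use only Moore-Penrose identities from Definition \ref{defmpi} and Lemma \ref{revA}, so no serious obstacle is expected; the main thing to be careful about is multiplying on the correct side and selecting the right identity to collapse the products.

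For $\eqref{eq11}\Rightarrow\eqref{eq13}$, I would post-multiply \eqref{eq11} by $(\mc{A}^{\dg})^*=(\mc{A}^*)^{\dg}$ and use $\mc{A}^*\n(\mc{A}^{\dg})^* = (\mc{A}^{\dg}\n\mc{A})^* = \mc{A}^{\dg}\n\mc{A}$ on both sides (the last equality because $\mc{A}^{\dg}\n\mc{A}$ is Hermitian by Definition \ref{defmpi}(4)). This turns \eqref{eq11} into
\[
\mc{A}^{\dg}\n\mc{A}\n\mc{B}\n\mc{B}^*\n\mc{A}^{\dg}\n\mc{A} \;=\; \mc{B}\n\mc{B}^*\n\mc{A}^{\dg}\n\mc{A},
\]
which is exactly \eqref{eq13} after rearrangement. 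Conversely, for $\eqref{eq13}\Rightarrow\eqref{eq11}$, I would post-multiply \eqref{eq13} by $\mc{A}^*$ and use Lemma \ref{revA}(a), namely $\mc{A}^{\dg}\n\mc{A}\n\mc{A}^* = \mc{A}^*$, to collapse both sides back to \eqref{eq11}.

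The treatment of $\eqref{eq12}\Leftrightarrow\eqref{eq14}$ is completely analogous and arguably simpler since it uses only the defining identity $\mc{B}\n\mc{B}^{\dg}\n\mc{B}=\mc{B}$. For $\eqref{eq12}\Rightarrow\eqref{eq14}$, post-multiply \eqref{eq12} by $\mc{B}^{\dg}$ to obtain
\[
\mc{B}\n\mc{B}^{\dg}\n\mc{A}^*\n\mc{A}\n\mc{B}\n\mc{B}^{\dg} \;=\; \mc{A}^*\n\mc{A}\n\mc{B}\n\mc{B}^{\dg},
\]
which rearranges to \eqref{eq14}. For the reverse direction, post-multiply \eqref{eq14} by $\mc{B}$ and use $\mc{B}\n\mc{B}^{\dg}\n\mc{B}=\mc{B}$ on both sides to recover \eqref{eq12}.

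The only subtle point is to remember that we are allowed to use $(\mc{A}^{\dg})^* = (\mc{A}^*)^{\dg}$, which is recorded in the paragraph after Lemma 2.2, and the Hermitian property of $\mc{A}^{\dg}\n\mc{A}$ and $\mc{A}\n\mc{A}^{\dg}$ from Definition \ref{defmpi}. Once these are in hand, each direction is a one-line manipulation, and the equivalence of the pair $\{\eqref{eq11},\eqref{eq12}\}$ with the pair $\{\eqref{eq13},\eqref{eq14}\}$ follows immediately from the two individual equivalences.
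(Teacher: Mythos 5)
Your proposal is correct and follows essentially the same route as the paper: post-multiply \eqref{eq11} by $(\mc{A}^{\dg})^*$ and \eqref{eq12} by $\mc{B}^{\dg}$ to get \eqref{eq13} and \eqref{eq14}, and conversely post-multiply \eqref{eq13} by $\mc{A}^*$ and \eqref{eq14} by $\mc{B}$. You simply spell out the intermediate identities ($\mc{A}^*\n(\mc{A}^{\dg})^*=\mc{A}^{\dg}\n\mc{A}$ and Lemma \ref{revA}(a)) that the paper leaves implicit.
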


\begin{proof}
Post-multiplying equation \eqref{eq11} by $(\mc{A}^{\dg})^*$ and
equation \eqref{eq12} by $\mc{B}^{\dg}$ produce equation
\eqref{eq13} and equation \eqref{eq14}, respectively. Conversely,
post-multiplying equation \eqref{eq13} by $\mc{A}^*$ and equation
\eqref{eq14} by $\mc{B}$ yield equation \eqref{eq11} and equation
\eqref{eq12}, respectively.

\end{proof}

We conclude this subsection with two results which provide different
expressions of $(\mc{A}^*\n\mc{A})^{\dg}$ and
$\mc{A}^{\dg}\n\mc{A}$.
\begin{theorem}
Let $\mc{A}\in \mathbb{C}^{I_1\times\cdots\times I_N \times
J_1\times\cdots\times J_N }$. Then
\begin{equation*}
(\mc{A}^*\n\mc{A})^{\dg} = \mc{A}^{\dg} \n
(\mc{A}\n\mc{A}^*)^{\dg}\n\mc{A} = \mc{A}^* \n
(\mc{A}\n\mc{A}^*)^{\dg}\n(\mc{A}^*)^{\dg}.
\end{equation*}
\end{theorem}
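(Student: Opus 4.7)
The plan is to show that both expressions collapse to $\mc{A}^{\dg}\n(\mc{A}^*)^{\dg}$, which by Theorem \ref{theo11}(a) equals $(\mc{A}^*\n\mc{A})^{\dg}$. The only tools I need are Theorem \ref{theo11}, Lemma \ref{revA}, and the four defining Moore-Penrose equations; no SVD or trace machinery will be necessary. The core observation is that Lemma \ref{revA}(c) gives $\mc{A}^{\dg} = \mc{A}^{*}\n(\mc{A}\n\mc{A}^{*})^{\dg}$, and combined with Theorem \ref{theo11}(b) this yields $\mc{A}^{\dg} = \mc{A}^{*}\n(\mc{A}^{*})^{\dg}\n\mc{A}^{\dg}$. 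Taking the conjugate transpose and using $(\mc{A}^{\dg})^{*}=(\mc{A}^{*})^{\dg}$, I obtain the key identity
\begin{equation}\label{keyid}
(\mc{A}^{*})^{\dg} = (\mc{A}^{*})^{\dg}\n\mc{A}^{\dg}\n\mc{A}.
\end{equation}

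For the first equality, I would apply Theorem \ref{theo11}(b) to rewrite
$$\mc{A}^{\dg}\n(\mc{A}\n\mc{A}^{*})^{\dg}\n\mc{A} = \mc{A}^{\dg}\n(\mc{A}^{*})^{\dg}\n\mc{A}^{\dg}\n\mc{A},$$
and then invoke \eqref{keyid} to collapse the last three factors to $(\mc{A}^{*})^{\dg}$, leaving $\mc{A}^{\dg}\n(\mc{A}^{*})^{\dg}$, which by Theorem \ref{theo11}(a) is $(\mc{A}^{*}\n\mc{A})^{\dg}$.

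For the second equality, I would again use Theorem \ref{theo11}(b) to get
$$\mc{A}^{*}\n(\mc{A}\n\mc{A}^{*})^{\dg}\n(\mc{A}^{*})^{\dg} = \mc{A}^{*}\n(\mc{A}^{*})^{\dg}\n\mc{A}^{\dg}\n(\mc{A}^{*})^{\dg}.$$
Using $(\mc{A}^{*})^{\dg}=(\mc{A}^{\dg})^{*}$ together with the Hermiticity of $\mc{A}^{\dg}\n\mc{A}$ (Moore-Penrose equation (4)), I rewrite $\mc{A}^{*}\n(\mc{A}^{*})^{\dg} = (\mc{A}^{\dg}\n\mc{A})^{*} = \mc{A}^{\dg}\n\mc{A}$, so the expression becomes $\mc{A}^{\dg}\n\mc{A}\n\mc{A}^{\dg}\n(\mc{A}^{*})^{\dg}$. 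The Moore-Penrose equation (2) then simplifies this to $\mc{A}^{\dg}\n(\mc{A}^{*})^{\dg} = (\mc{A}^{*}\n\mc{A})^{\dg}$.

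There is no genuine obstacle; the argument is essentially bookkeeping. The one subtlety is identifying \eqref{keyid} as the right intermediate identity, since it is what bridges Theorem \ref{theo11}(b) and the absorption of the trailing $\mc{A}^{\dg}\n\mc{A}$. Once that is in hand, both equalities fall out in two or three rewrites each.
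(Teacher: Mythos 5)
Your proof is correct and uses essentially the same toolbox as the paper's: Theorem \ref{theo11}(a)--(b), the identity $(\mc{A}^*)^{\dg}=(\mc{A}^{\dg})^*$, the Hermiticity of $\mc{A}^{\dg}\n\mc{A}$ and $\mc{A}\n\mc{A}^{\dg}$, and Moore--Penrose equation (2); the only difference is direction, since the paper expands $(\mc{A}^*\n\mc{A})^{\dg}=\mc{A}^{\dg}\n(\mc{A}^*)^{\dg}$ forward and regroups, while you collapse each target expression back to $\mc{A}^{\dg}\n(\mc{A}^*)^{\dg}$. Your intermediate identity $(\mc{A}^*)^{\dg}=(\mc{A}^*)^{\dg}\n\mc{A}^{\dg}\n\mc{A}$ is just a repackaging of the regrouping step the paper performs, so this is the same argument, cleanly reorganized.
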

\begin{proof}
By Theorem \ref{theo11}  (a) and Lemma \ref{revA} (c), we have
$(\mc{A}^*\n\mc{A})^{\dg} =
\mc{A}^{\dg}\n(\mc{A}^*)^{\dg}\n\mc{A}^*\n(\mc{A}^*)^{\dg},$ which
implies
\begin{equation}\label{eq15}
(\mc{A}^*\n\mc{A})^{\dg} =
\mc{A}^{\dg}\n(\mc{A}^{\dg})^*\n\mc{A}^*\n(\mc{A}^{\dg})^*,
\end{equation}
i.e., $(\mc{A}^*\n\mc{A})^{\dg} =
\mc{A}^{\dg}\n(\mc{A}^{\dg})^*\n(\mc{A}^{\dg}\n\mc{A})^*.$ By using
Theorem \ref{theo11} (b), we obtain $(\mc{A}^*\n\mc{A})^{\dg} =
\mc{A}^{\dg}\n(\mc{A}\n\mc{A}^*)^{\dg}\n\mc{A}.$ Equation
\eqref{eq15} can be rewritten as $(\mc{A}^*\n\mc{A})^{\dg} =
\mc{A}^{\dg}\n(\mc{A}\n\mc{A}^{\dg})^*\n(\mc{A}^{\dg})^*.$ Then, by
using Theorem \ref{theo11} (b), we obtain
$$(\mc{A}^*\n\mc{A})^{\dg} = \mc{A}^*\n(\mc{A}\n\mc{A}^*)^{\dg}\n(\mc{A}^*)^{\dg}.$$

\end{proof}
\begin{theorem}
Let $\mc{A}\in \mathbb{C}^{I_1\times\cdots\times I_N \times
J_1\times\cdots\times J_N }$. Then
\begin{equation*}
\mc{A}^{\dg}\n\mc{A}= \mc{A}^*\n\mc{A}\n(\mc{A}^*\n\mc{A})^{\dg} =
(\mc{A}^*\n\mc{A})^{\dg}\n\mc{A}^*\n\mc{A}.
\end{equation*}
\end{theorem}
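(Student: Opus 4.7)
The plan is to prove the two asserted equalities by expanding $(\mc{A}^*\n\mc{A})^{\dg}$ using Theorem \ref{theo11}(a) and then collapsing the resulting products via the basic Moore--Penrose identities in Definition \ref{defmpi} together with Lemma \ref{revA}(a). The key structural observation is that $\mc{A}^{\dg}\n\mc{A}$ is Hermitian (axiom (4)) and $\mc{A}\n\mc{A}^{\dg}$ is Hermitian (axiom (3)), so any product of the form $\mc{A}^*\n(\mc{A}^{\dg})^*$ or $(\mc{A}^{\dg})^*\n\mc{A}^*$ is immediately recognized as one of these two Hermitian idempotents.

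For the first equality, I would start with
\[
\mc{A}^*\n\mc{A}\n(\mc{A}^*\n\mc{A})^{\dg} = \mc{A}^*\n\mc{A}\n\mc{A}^{\dg}\n(\mc{A}^*)^{\dg}
\]
using Theorem \ref{theo11}(a). Then Lemma \ref{revA}(a) collapses $\mc{A}^*\n\mc{A}\n\mc{A}^{\dg}$ to $\mc{A}^*$, leaving $\mc{A}^*\n(\mc{A}^{\dg})^*$. Since $(\mc{A}^{\dg}\n\mc{A})^* = \mc{A}^*\n(\mc{A}^{\dg})^*$ and the Moore--Penrose axiom (4) gives $(\mc{A}^{\dg}\n\mc{A})^* = \mc{A}^{\dg}\n\mc{A}$, the right side equals $\mc{A}^{\dg}\n\mc{A}$.

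For the second equality, I would again apply Theorem \ref{theo11}(a) to write
\[
(\mc{A}^*\n\mc{A})^{\dg}\n\mc{A}^*\n\mc{A} = \mc{A}^{\dg}\n(\mc{A}^*)^{\dg}\n\mc{A}^*\n\mc{A}.
\]
The middle factor $(\mc{A}^{\dg})^*\n\mc{A}^* = (\mc{A}\n\mc{A}^{\dg})^*$ equals $\mc{A}\n\mc{A}^{\dg}$ by axiom (3), reducing the expression to $\mc{A}^{\dg}\n\mc{A}\n\mc{A}^{\dg}\n\mc{A}$. Axiom (2) gives $\mc{A}^{\dg}\n\mc{A}\n\mc{A}^{\dg} = \mc{A}^{\dg}$, so the whole quantity collapses to $\mc{A}^{\dg}\n\mc{A}$, completing the proof.

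There is essentially no obstacle here: the hypotheses of Theorem \ref{theo11}(a) and Lemma \ref{revA}(a) apply directly, and the only conceptual step is recognizing that the Hermitian property of the projectors $\mc{A}^{\dg}\n\mc{A}$ and $\mc{A}\n\mc{A}^{\dg}$ lets us rewrite $\mc{A}^*\n(\mc{A}^{\dg})^*$ and $(\mc{A}^{\dg})^*\n\mc{A}^*$ without invoking SVD. The proof should therefore be only a handful of lines long, symmetric in structure between the two equalities.
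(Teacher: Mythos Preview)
Your proof is correct and uses essentially the same ingredients as the paper: Theorem~\ref{theo11}(a) to rewrite $(\mc{A}^*\n\mc{A})^{\dg}$ as $\mc{A}^{\dg}\n(\mc{A}^{\dg})^*$, combined with the Moore--Penrose axioms (3), (4) and Lemma~\ref{revA}(a) to collapse the resulting products. The only difference is direction: the paper starts from $\mc{A}^{\dg}\n\mc{A}$, inserts $\mc{A}^*\n\mc{A}\n\mc{A}^{\dg}$ for $\mc{A}^*$ (respectively $(\mc{A}\n\mc{A}^{\dg})^*$ for $\mc{A}\n\mc{A}^{\dg}$) to arrive at $\mc{A}^*\n\mc{A}\n\mc{A}^{\dg}\n(\mc{A}^{\dg})^*$ and $\mc{A}^{\dg}\n(\mc{A}^{\dg})^*\n\mc{A}^*\n\mc{A}$, and then identifies the trailing or leading pair via Theorem~\ref{theo11}(a), whereas you run each chain in reverse.
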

\begin{proof}
By Definition \ref{defmpi}, we have
\begin{equation}\label{eq16}
\mc{A}^{\dg}\n \mc{A} = \mc{A}^*\n \mc{A}
\n\mc{A}^{\dg}\n(\mc{A}^{\dg})^*.
\end{equation}
In view of Theorem \ref{theo11}  (a) equation \eqref{eq16} reduces
to $\mc{A}^{\dg}\n \mc{A} = \mc{A}^*\n \mc{A} \n(\mc{A}^*\n
\mc{A})^{\dg}.$ By Definition \ref{defmpi}, we have
\begin{equation}\label{eq17}
\mc{A}^{\dg}\n \mc{A} = \mc{A}^{\dg}\n (\mc{A}^{\dg})^* \n\mc{A}^*\n
\mc{A}.
\end{equation}
Finally, the application of  Theorem \ref{theo11}  (a) to equation
\eqref{eq17} results
$$\mc{A}^{\dg}\n \mc{A} = (\mc{A}^*\n \mc{A})^{\dg} \n(\mc{A}^*\n \mc{A}).$$
\end{proof}

 \subsection{Reverse Order Law}

In this subsection, we present various necessary and sufficient
conditions for the reverse order law for the Moore-Penrose inverses
of tensors to hold. The first result obtained below deals with the
reverse order law of tensors in the case of one unitary tensor.

\begin{theorem}\label{thm 3.15}
Let $\mc{A}\in \mathbb{C}^{I_1\times\cdots\times I_N \times
I_1\times\cdots\times I_N }$ and $\mc{B}\in
\mathbb{C}^{I_1\times\cdots\times I_N \times I_1\times\cdots\times
I_N }$.
\begin{itemize}
\item[(a)] If $\mc{B}$ is unitary, then $(\mc{A}\n\mc{B})^{\dg} = \mc{B}^*\n\mc{A}^{\dg}.$
\item[(b)] If $\mc{A}$ is unitary, then $(\mc{A}\n\mc{B})^{\dg} = \mc{B}^{\dg}\n\mc{A}^*.$
\end{itemize}
\end{theorem}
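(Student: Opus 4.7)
The plan is to verify directly that the proposed candidate satisfies the four defining equations of the Moore-Penrose inverse given in Definition \ref{defmpi}; uniqueness (guaranteed by Theorem 3.2 of \cite{sun}) then forces the equality. Since $\mc{B}$ is unitary, $\mc{B}\n\mc{B}^* = \mc{B}^*\n\mc{B} = \mc{I}$, and these two identities are the only extra facts needed beyond the four Moore-Penrose conditions for $\mc{A}$ itself. Part (b) is handled symmetrically from the unitarity of $\mc{A}$.

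For part (a), I would set $\mc{X} := \mc{B}^*\n\mc{A}^{\dg}$ and check the four axioms in turn. Axiom (1) is $(\mc{A}\n\mc{B})\n\mc{X}\n(\mc{A}\n\mc{B}) = \mc{A}\n\mc{B}\n\mc{B}^*\n\mc{A}^{\dg}\n\mc{A}\n\mc{B} = \mc{A}\n\mc{A}^{\dg}\n\mc{A}\n\mc{B} = \mc{A}\n\mc{B}$, using $\mc{B}\n\mc{B}^*=\mc{I}$ and Moore-Penrose equation (1) for $\mc{A}$. Axiom (2) collapses similarly after inserting $\mc{B}\n\mc{B}^* = \mc{I}$ and applying Moore-Penrose equation (2) for $\mc{A}$. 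For the Hermiticity axioms, $(\mc{A}\n\mc{B})\n\mc{X} = \mc{A}\n\mc{A}^{\dg}$ is immediately Hermitian, while $\mc{X}\n(\mc{A}\n\mc{B}) = \mc{B}^*\n(\mc{A}^{\dg}\n\mc{A})\n\mc{B}$ is Hermitian because $\mc{A}^{\dg}\n\mc{A}$ is Hermitian (applying $(\cdot)^*$ and using the standard identity $(\mc{P}\n\mc{Q})^* = \mc{Q}^*\n\mc{P}^*$, which follows from the definition of the Einstein product and conjugate transpose given in the preliminaries).

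For part (b), set $\mc{Y} := \mc{B}^{\dg}\n\mc{A}^*$ and run the same four checks, now using $\mc{A}^*\n\mc{A} = \mc{I}$ and the Moore-Penrose equations for $\mc{B}$. Axiom (1) reduces to $\mc{A}\n\mc{B}\n\mc{B}^{\dg}\n\mc{B} = \mc{A}\n\mc{B}$, axiom (2) to $\mc{B}^{\dg}\n\mc{B}\n\mc{B}^{\dg}\n\mc{A}^* = \mc{B}^{\dg}\n\mc{A}^* = \mc{Y}$, and the Hermiticity axioms reduce respectively to the Hermiticity of $\mc{A}\n(\mc{B}\n\mc{B}^{\dg})\n\mc{A}^*$ and $\mc{B}^{\dg}\n\mc{B}$.

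There is no real obstacle: the argument is a direct axiom verification, and the only subtlety worth being explicit about is the sandwiching trick used for the Hermiticity axiom (4) in part (a) and axiom (3) in part (b), where one needs to commute the conjugate transpose past the unitary factor using $(\mc{U}\n\mc{P}\n\mc{U}^*)^* = \mc{U}\n\mc{P}^*\n\mc{U}^*$. By the uniqueness of the Moore-Penrose inverse, the verified candidate must equal $(\mc{A}\n\mc{B})^{\dg}$, completing the proof of both parts.
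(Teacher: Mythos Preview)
Your proposal is correct and follows essentially the same approach as the paper: both proofs set the candidate $\mc{X}=\mc{B}^*\n\mc{A}^{\dg}$ (respectively $\mc{Y}=\mc{B}^{\dg}\n\mc{A}^*$) and verify the four Moore--Penrose axioms directly, using $\mc{B}\n\mc{B}^*=\mc{I}$ (respectively $\mc{A}^*\n\mc{A}=\mc{I}$) to collapse the products. Your write-up is in fact slightly more explicit than the paper's, which simply lists the four required identities and declares them satisfied.
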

\begin{proof}
$(a)$ Since $\mc{B}$ is unitary, $(\mc{A}\n\mc{B})\n
(\mc{B}^*\n\mc{A}^{\dg}) \n(\mc{A}\n\mc{B}) = \mc{A}\n\mc{B},~
(\mc{B}^*\n\mc{A}^{\dg})\n(\mc{A}\n\mc{B})
\n(\mc{B}^*\n\mc{A}^{\dg})\\=(\mc{B}^*\n\mc{A}^{\dg}),~[(\mc{A}\n\mc{B})
\n(\mc{B}^*\n\mc{A}^{\dg})]^* =
(\mc{A}\n\mc{B})\n(\mc{B}^*\n\mc{A}^{\dg}) $,
 and $ [(\mc{B}^*\n\mc{A}^{\dg})\n(\mc{A}\n\mc{B})]^* =
 (\mc{B}^*\n\mc{A}^{\dg})\n(\mc{A}\n\mc{B})$.
 Thus $(\mc{A}\n\mc{B})^{\dg} = \mc{B}^*\n\mc{A}^{\dg}$.\\

$(b)$ In the similar process as  of $(a)$, one can prove $(b)$.
\end{proof}

As a consequence, we have the following result which is stated as
Lemma 2.6 (d) in \cite{bm} .

\begin{theorem}
Let $\mc{A}\in \mathbb{C}^{I_1\times\cdots\times I_N \times
I_1\times\cdots\times I_N },~\mc{B} \in
\mathbb{C}^{I_1\times\cdots\times I_N \times I_1\times\cdots\times
I_N }$ and $\mc{C} \in \mathbb{C}^{I_1\times\cdots\times I_N \times
I_1\times\cdots\times I_N }$. If $\mc{B}$ and $\mc{C}$ are unitary,
then $(\mc{B}\n\mc{A}\n\mc{C})^{\dg} =
\mc{C}^*\n\mc{A}^{\dg}\n\mc{B}^*$.
\end{theorem}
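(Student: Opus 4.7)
The plan is to deduce this directly from Theorem \ref{thm 3.15} by applying it twice, treating $\mc{B}\n\mc{A}\n\mc{C}$ as a grouped product. The associativity of the Einstein product $\n$ (noted in the introduction) is what lets us bracket the product either way.

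First I would write $\mc{B}\n\mc{A}\n\mc{C} = (\mc{B}\n\mc{A})\n\mc{C}$. Since $\mc{C}$ is unitary, Theorem \ref{thm 3.15}(a), applied with the role of the left factor played by $\mc{B}\n\mc{A}$ and the role of the right unitary factor played by $\mc{C}$, gives
\[
(\mc{B}\n\mc{A}\n\mc{C})^{\dg} \;=\; \mc{C}^*\n(\mc{B}\n\mc{A})^{\dg}.
\]
Next, since $\mc{B}$ is unitary, Theorem \ref{thm 3.15}(b), applied with the left unitary factor $\mc{B}$ and the right factor $\mc{A}$, gives
\[
(\mc{B}\n\mc{A})^{\dg} \;=\; \mc{A}^{\dg}\n\mc{B}^*.
\]
Substituting the second expression into the first yields the desired identity $(\mc{B}\n\mc{A}\n\mc{C})^{\dg} = \mc{C}^*\n\mc{A}^{\dg}\n\mc{B}^*$.

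Since both ingredients are already established, there is no real obstacle here; the only thing to be careful about is matching the factors to the correct parts (a) and (b) of Theorem \ref{thm 3.15} so that unitarity is applied to the correct side in each step. As a sanity check one could also verify the result directly by plugging $\mc{X}:=\mc{C}^*\n\mc{A}^{\dg}\n\mc{B}^*$ into the four Moore--Penrose equations of Definition \ref{defmpi} for $\mc{B}\n\mc{A}\n\mc{C}$: the unitarity relations $\mc{B}\n\mc{B}^* = \mc{B}^*\n\mc{B} = \mc{I}$ and $\mc{C}\n\mc{C}^* = \mc{C}^*\n\mc{C} = \mc{I}$ collapse each verification to the corresponding Moore--Penrose identity for $\mc{A}$, but the two-step reduction via Theorem \ref{thm 3.15} is shorter and cleaner.
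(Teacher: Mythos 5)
Your proof is correct and follows essentially the same route as the paper: both apply the two parts of Theorem \ref{thm 3.15} in succession, the only difference being that you bracket as $(\mc{B}\n\mc{A})\n\mc{C}$ and peel off the unitary $\mc{C}$ first, whereas the paper brackets as $\mc{B}\n(\mc{A}\n\mc{C})$ and peels off $\mc{B}$ first. This is an immaterial difference in the order of application.
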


\begin{proof}
The fact $\mc{B}$ is unitary and Theorem \ref{thm 3.15} (b) together
yield
\begin{equation}\label{eq18}
(\mc{B}\n\mc{A}\n\mc{C})^{\dg} = (\mc{A}\n\mc{C})^{\dg}\n\mc{B}^*.
\end{equation}
By using the  fact $\mc{C}$ is unitary and Theorem \ref{thm 3.15}
(a),  equation \eqref{eq18} implies $(\mc{B}\n\mc{A}\n\mc{C})^{\dg}
= \mc{C}^*\n\mc{A}^{\dg}\n\mc{B}^*$.
\end{proof}

The primary result of this paper is presented next which answers the
question posed in the introduction section.

\begin{theorem}\label{rev1}
Let $\mc{A}\in \mathbb{C}^{I_1\times\cdots\times I_N \times
J_1\times\cdots\times J_N }$ and $\mc{B}\in
\mathbb{C}^{J_1\times\cdots\times J_N \times I_1\times\cdots\times
I_N}$. Then $(\mc{A}\n\mc{B})^{\dg} = \mc{B}^{\dg} \n \mc{A}^{\dg}$
if and only if
\begin{equation}\label{eq19}
\mc{A}^{\dg}\n\mc{A}\n\mc{B}\n\mc{B}^* \n\mc{A}^* = \mc{B}\n\mc{B}^*
\n \mc{A}^*
\end{equation}
and
\begin{equation}\label{eq20}
\mc{B}\n\mc{B}^{\dg}\n\mc{A}^*\n\mc{A}\n\mc{B} = \mc{A}^* \n\mc{A}
\n \mc{B}.
\end{equation}
 \end{theorem}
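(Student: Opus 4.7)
The plan is to prove the two implications separately, with Lemma \ref{revA} for rewriting adjoint-versus-Moore-Penrose products and Lemma \ref{lm1} as the cancellation tool in both directions.

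For the necessary direction, assume the reverse order law. The third defining identity for the Moore-Penrose inverse says $\mc{A}\n\mc{B}\n\mc{B}^{\dg}\n\mc{A}^{\dg}$ is Hermitian; expanding the adjoint and using $(\mc{B}^{\dg})^*\n\mc{B}^*=\mc{B}\n\mc{B}^{\dg}$ gives $\mc{A}\n\mc{B}\n\mc{B}^{\dg}\n\mc{A}^{\dg}=(\mc{A}^{\dg})^*\n\mc{B}\n\mc{B}^{\dg}\n\mc{A}^*$. Substituting this into the first defining identity $\mc{A}\n\mc{B}\n\mc{B}^{\dg}\n\mc{A}^{\dg}\n\mc{A}\n\mc{B}=\mc{A}\n\mc{B}$ and pre-multiplying by $\mc{A}^*$ produces $\mc{A}^{\dg}\n\mc{A}\n\mc{B}\n\mc{B}^{\dg}\n\mc{A}^*\n\mc{A}\n\mc{B}=\mc{A}^*\n\mc{A}\n\mc{B}$. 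Setting $\mc{W}=\mc{B}\n\mc{B}^{\dg}\n\mc{A}^*\n\mc{A}\n\mc{B}-\mc{A}^*\n\mc{A}\n\mc{B}$, Lemma \ref{revA}(a) yields $\mc{A}^{\dg}\n\mc{A}\n\mc{W}=\mc{O}$; pre-multiplying by $\mc{A}$ and using $\mc{A}\n\mc{A}^{\dg}\n\mc{A}=\mc{A}$ gives $\mc{A}\n\mc{W}=\mc{O}$, whence $\mc{A}^*\n\mc{A}\n\mc{W}=\mc{O}$. Expanding $\mc{W}^*\n\mc{W}$ and using the idempotence of $\mc{B}\n\mc{B}^{\dg}$ reduces the expression to $-\mc{B}^*\n\mc{A}^*\n\mc{A}\n\mc{W}=\mc{O}$; by Lemma \ref{lm1}, $\mc{W}=\mc{O}$, which is (20). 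Equation (19) is obtained by the parallel argument using the fourth defining identity $(\mc{B}^{\dg}\n\mc{A}^{\dg}\n\mc{A}\n\mc{B})^*=\mc{B}^{\dg}\n\mc{A}^{\dg}\n\mc{A}\n\mc{B}$ together with the first identity, leading to $\mc{U}^*\n\mc{U}=\mc{O}$ for $\mc{U}=\mc{A}^{\dg}\n\mc{A}\n\mc{B}\n\mc{B}^*\n\mc{A}^*-\mc{B}\n\mc{B}^*\n\mc{A}^*$.

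For the sufficient direction, set $\mc{X}=\mc{B}^{\dg}\n\mc{A}^{\dg}$ and verify the four Moore-Penrose axioms for $\mc{A}\n\mc{B}$. Post-multiplying (19) by $(\mc{A}^{\dg})^*$ and using $\mc{A}^*\n(\mc{A}^{\dg})^*=\mc{A}^{\dg}\n\mc{A}$ gives $\mc{A}^{\dg}\n\mc{A}\n\mc{B}\n\mc{B}^*\n\mc{A}^{\dg}\n\mc{A}=\mc{B}\n\mc{B}^*\n\mc{A}^{\dg}\n\mc{A}$; Hermitianness of the left side then forces $\mc{A}^{\dg}\n\mc{A}$ to commute with $\mc{B}\n\mc{B}^*$. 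Post-multiplying (20) by $\mc{B}^{\dg}$ analogously shows $\mc{A}^*\n\mc{A}$ commutes with $\mc{B}\n\mc{B}^{\dg}$. The third axiom then reduces to rewriting $\mc{A}\n\mc{B}\n\mc{B}^{\dg}\n\mc{A}^{\dg}$ via $\mc{A}=(\mc{A}^{\dg})^*\n\mc{A}^*\n\mc{A}$ (Lemma \ref{revA}(b)), shifting $\mc{A}^*\n\mc{A}$ past $\mc{B}\n\mc{B}^{\dg}$, and collapsing with $\mc{A}^*\n\mc{A}\n\mc{A}^{\dg}=\mc{A}^*$ (Lemma \ref{revA}(a)); the fourth axiom is dual, using (19). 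For the first two axioms, I would use the tensor SVD (Lemma 3.1 of \cite{sun}) to upgrade the commutativity of $\mc{A}^{\dg}\n\mc{A}$ with $\mc{B}\n\mc{B}^*$ into commutativity with $(\mc{B}\n\mc{B}^*)^{\dg}$, and hence with $\mc{B}\n\mc{B}^{\dg}=(\mc{B}\n\mc{B}^*)\n(\mc{B}\n\mc{B}^*)^{\dg}$ (from Lemma \ref{revA}(c)); both axioms then collapse via $\mc{A}\n\mc{A}^{\dg}\n\mc{A}=\mc{A}$ and $\mc{B}\n\mc{B}^{\dg}\n\mc{B}=\mc{B}$.

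The main obstacle is the final upgrade just described: promoting the two commutativities produced by (19)--(20) to the commutativity of the projections $\mc{A}^{\dg}\n\mc{A}$ and $\mc{B}\n\mc{B}^{\dg}$. This is the tensor analog of the classical fact that a matrix commuting with a Hermitian matrix commutes with every function of it, and it routes through the SVD of Hermitian tensors.
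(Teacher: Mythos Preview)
Your necessary direction is correct and close in spirit to the paper's: both arguments reduce to an identity of the form $\mc{X}^*\n\mc{X}=\mc{O}$ and then invoke Lemma~\ref{lm1}. The paper reaches this point slightly differently (it premultiplies $\mc{B}^*\n\mc{A}^*=\mc{B}^{\dg}\n\mc{A}^{\dg}\n\mc{A}\n\mc{B}\n\mc{B}^*\n\mc{A}^*$ by $\mc{A}\n\mc{B}\n\mc{B}^*\n\mc{B}$ and uses idempotence of $\mc{I}-\mc{A}^{\dg}\n\mc{A}$), but your route via $\mc{W}$ and $\mc{A}^*\n\mc{A}\n\mc{W}=\mc{O}$ works just as well.

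For the sufficient direction your argument is also valid, but it is more circuitous than the paper's, and the obstacle you flag is one the paper simply sidesteps. Rather than upgrading the commutativity of $\mc{A}^{\dg}\n\mc{A}$ with $\mc{B}\n\mc{B}^*$ to commutativity with $\mc{B}\n\mc{B}^{\dg}$ through a spectral/SVD argument, the paper premultiplies \eqref{eq19} by $\mc{B}^{\dg}$ and postmultiplies by $((\mc{A}\n\mc{B})^*)^{\dg}$; using $\mc{B}^{\dg}\n\mc{B}\n\mc{B}^*=\mc{B}^*$ and Lemma~\ref{revA}(b) this yields directly
\[
\mc{B}^{\dg}\n\mc{A}^{\dg}\n\mc{A}\n\mc{B}=(\mc{A}\n\mc{B})^{\dg}\n\mc{A}\n\mc{B},
\]
and a dual manipulation of \eqref{eq20} gives $\mc{A}\n\mc{B}\n\mc{B}^{\dg}\n\mc{A}^{\dg}=\mc{A}\n\mc{B}\n(\mc{A}\n\mc{B})^{\dg}$. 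These two identities dispose of axioms (1), (3), (4) immediately (the right-hand sides are already Hermitian, and premultiplying the first by $\mc{A}\n\mc{B}$ gives axiom (1)); axiom (2) then follows from Lemma~\ref{lm3.13} and a short algebraic manipulation. The payoff is that no tensor SVD or functional-calculus step is needed at all. Your approach is conceptually natural (reduce everything to commutation of the projections) and does work once one appeals to the matrix--tensor unfolding isomorphism, but the paper's trick of bringing $(\mc{A}\n\mc{B})^{\dg}$ itself into the computation is both shorter and more elementary.
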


\begin{proof}
Pre-multiplying and post-multiplying equation \eqref{eq19} by
$\mc{B}^{\dg}$ and $((\mc{A}\n\mc{B})^*)^{\dg}$, respectively, we
get
\begin{equation}\label{eq21}
\mc{B}^{\dg} \n\mc{A}^{\dg} \n\mc{A} \n\mc{B}  =
(\mc{A}\n\mc{B})^{\dg} \n \mc{A}\n\mc{B}.
\end{equation}
Taking the conjugate transpose  of equation \eqref{eq20}, we have
\begin{equation}\label{eq22}
\mc{B}^* \n\mc{A}^* \n\mc{A} \n\mc{B} \n\mc{B}^{\dg}  =  \mc{B}^*
\n\mc{A}^* \n\mc{A}
\end{equation}
Pre-multiplying and post-multiplying equation \eqref{eq22} by
$((\mc{A}\n\mc{B})^*)^{\dg}$ and $\mc{A}^{\dg}$, respectively, we
obtain
\begin{equation}\label{eq23}
\mc{A}\n\mc{B} \n\mc{B}^{\dg} \n\mc{A}^{\dg}  =
\mc{A}\n\mc{B}\n(\mc{A}\n\mc{B})^{\dg}.
\end{equation}
In order to show $(\mc{A}\n\mc{B})^{\dg} = \mc{B}^{\dg}
\n\mc{A}^{\dg}$, we have to show that  $\mc{B}^{\dg}\n\mc{A}^{\dg}$
satisfies Definition \ref{defmpi}, and is shown below. Using
equation \eqref{eq21}, we have  $\mc{A}\n\mc{B} \n\mc{B}^{\dg}
\n\mc{A}^{\dg} \n\mc{A} \n\mc{B} = \mc{A}\n\mc{B}$. Applying Lemma
\ref{revA} and Lemma \ref{lm3.13} to equation \eqref{eq19}, we
obtain
$$\mc{B}^*\n\mc{A}^{\dg}\n\mc{A}\n\mc{B}\n\mc{B}^{\dg}\n\mc{A}^*  =
\mc{B}^*\n\mc{A}^*$$  which in turn implies
$$ \mc{B}^{\dg}\n\mc{A}^{\dg}\n\mc{A}\n\mc{B}\n \mc{B}^{\dg}\n\mc{A}^{\dg}  =  \mc{B}^{\dg}\n\mc{A}^{\dg},$$
by pre-multiplication and post-multiplication of
$(\mc{B}^*\n\mc{B})^{\dg}$ and $(\mc{A}\n\mc{A}^*)^{\dg}$,
respectively. Thus, we have $ (\mc{A}\n\mc{B}\n\mc{B}^{\dg}
\n\mc{A}^{\dg})^* = \mc{A} \n\mc{B} \n\mc{B}^{\dg} \n\mc{A}^{\dg} $
by using equation \eqref{eq23} and  $ (\mc{B}^{\dg} \n \mc{A}^{\dg}
\n \mc{A} \n \mc{B})^* = \mc{B}^{\dg} \n \mc{A}^{\dg} \n \mc{A} \n
\mc{B} $, by using equation \eqref{eq21}.

Conversely, in view of Lemma \ref{revA}, $ (\mc{A}\n\mc{B})^{\dg}  =
\mc{B}^{\dg} \n\mc{A}^{\dg}$ implies
\begin{equation}\label{eq24}
\mc{B}^* \n\mc{A}^*  =  \mc{B}^{\dg} \n\mc{A}^{\dg} \n\mc{A}
\n\mc{B} \n\mc{B}^* \n\mc{A}^* .
\end{equation}
Pre-multiplying equation \eqref{eq24} by $\mc{A}\n\mc{B}\n\mc{B}^*
\n\mc{B}$, we get
$$\mc{A} \n\mc{B} \n\mc{B}^* \n(\mc{I} -\mc{A}^{\dg} \n\mc{A})\n\mc{B}\n\mc{B}^* \n\mc{A}^*  =  \mc{O}.$$
Since $(\mc{I} -\mc{A}^{\dg} \n\mc{A})$ is both idempotent and
Hermitian, we obtain $\mc{A}^{\dg} \n\mc{A} \n\mc{B} \n\mc{B}^*
\n\mc{A}^*  =  \mc{B} \n\mc{B}^* \n\mc{A}^*$.
%
Again, by using  Definition \ref{defmpi} and the hypothesis $
(\mc{A}\n\mc{B})^{\dg}  =  \mc{B}^{\dg} \n\mc{A}^{\dg}$, we obtain
\begin{equation}\label{eq25}
\mc{A}\n\mc{B} = \mc{A^{\dg}}^* \n\mc{B^{\dg}}^* \n\mc{B}^*
\n\mc{A}^* \n\mc{A}\n\mc{B}.
\end{equation}
Pre-multiplying equation \eqref{eq25} by $\mc{B}^*
\n\mc{A}^*\n\mc{A}\n\mc{A^*}$, we get
$$\mc{B}^* \n\mc{A}^* \n\mc{A} \n(\mc{I}-\mc{B}\n\mc{B}^{\dg})\n\mc{A}^* \n\mc{A} \n\mc{B}  =  \mc{O}.$$
Since $\mc{I}-\mc{B}\n\mc{B}^{\dg}$ is both idempotent and
Hermitian, we have $\mc{B}\n\mc{B}^* \n\mc{A}^* \n\mc{A}\n\mc{B}  =
\mc{A}^* \n\mc{A} \n\mc{B}$.
\end{proof}

We next replace the conditions in equations (\ref{eq19}) and
(\ref{eq20}) in Theorem \ref{rev1} by another two.

\begin{theorem}\label{Th2}
Let $\mc{A}\in \mathbb{C}^{I_1\times\cdots\times I_N \times
J_1\times\cdots\times J_N }$ and $\mc{B}\in
\mathbb{C}^{J_1\times\cdots\times J_N \times I_1\times\cdots\times
I_N }$.  Then $(\mc{A}\n\mc{B})^{\dg} =\mc{B}^{\dg} \n \mc{A}^{\dg}$
if and only if both $\mc{A}^{\dg} \n\mc{A}\n\mc{B}\n\mc{B}^*$ and
$\mc{A}^* \n\mc{A}\n\mc{B}\n\mc{B}^{\dg}$ are Hermitian.
\end{theorem}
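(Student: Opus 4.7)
The plan is to leverage Theorem \ref{rev1}, which we just proved: the reverse order law $(\mc{A}\n\mc{B})^{\dg}=\mc{B}^{\dg}\n\mc{A}^{\dg}$ is equivalent to the pair of conditions
\begin{equation*}
\mc{A}^{\dg}\n\mc{A}\n\mc{B}\n\mc{B}^*\n\mc{A}^*=\mc{B}\n\mc{B}^*\n\mc{A}^*\qquad\text{and}\qquad \mc{B}\n\mc{B}^{\dg}\n\mc{A}^*\n\mc{A}\n\mc{B}=\mc{A}^*\n\mc{A}\n\mc{B}.
\end{equation*}
So it suffices to show that these two identities are jointly equivalent to the Hermiticity of $\mc{A}^{\dg}\n\mc{A}\n\mc{B}\n\mc{B}^*$ and $\mc{A}^*\n\mc{A}\n\mc{B}\n\mc{B}^{\dg}$.

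For the forward direction, I would start from the first identity of Theorem \ref{rev1} and post-multiply both sides by $(\mc{A}^*)^{\dg}=(\mc{A}^{\dg})^*$, so that the factor $\mc{A}^*\n(\mc{A}^{\dg})^*=(\mc{A}^{\dg}\n\mc{A})^*$ appearing on each side becomes the Hermitian idempotent $\mc{A}^{\dg}\n\mc{A}$. The result is
\begin{equation*}
\mc{A}^{\dg}\n\mc{A}\n\mc{B}\n\mc{B}^*\n\mc{A}^{\dg}\n\mc{A}=\mc{B}\n\mc{B}^*\n\mc{A}^{\dg}\n\mc{A}.
\end{equation*}
The left side has the symmetric sandwich form $PXP$ with $P=\mc{A}^{\dg}\n\mc{A}$ Hermitian and $X=\mc{B}\n\mc{B}^*$ Hermitian, hence is Hermitian; therefore the right side is Hermitian. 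Taking its conjugate transpose yields $\mc{A}^{\dg}\n\mc{A}\n\mc{B}\n\mc{B}^*=\mc{B}\n\mc{B}^*\n\mc{A}^{\dg}\n\mc{A}$, which is manifestly Hermitian. An entirely parallel manipulation on the second identity of Theorem \ref{rev1}, this time pre-multiplying by $(\mc{B}^{\dg})^*$ (or equivalently dualising the argument), yields the Hermiticity of $\mc{A}^*\n\mc{A}\n\mc{B}\n\mc{B}^{\dg}$.

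For the converse, assume $\mc{A}^{\dg}\n\mc{A}\n\mc{B}\n\mc{B}^*$ is Hermitian. Equating it to its conjugate transpose, and using that $\mc{A}^{\dg}\n\mc{A}$ and $\mc{B}\n\mc{B}^*$ are individually Hermitian, gives
\begin{equation*}
\mc{A}^{\dg}\n\mc{A}\n\mc{B}\n\mc{B}^*=\mc{B}\n\mc{B}^*\n\mc{A}^{\dg}\n\mc{A}.
\end{equation*}
Post-multiplying both sides by $\mc{A}^*$ and invoking Lemma \ref{revA}(a) in the form $\mc{A}^{\dg}\n\mc{A}\n\mc{A}^*=\mc{A}^*$ collapses the right side to $\mc{B}\n\mc{B}^*\n\mc{A}^*$, delivering the first identity of Theorem \ref{rev1}. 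Symmetrically, from the Hermiticity of $\mc{A}^*\n\mc{A}\n\mc{B}\n\mc{B}^{\dg}$, one obtains $\mc{A}^*\n\mc{A}\n\mc{B}\n\mc{B}^{\dg}=\mc{B}\n\mc{B}^{\dg}\n\mc{A}^*\n\mc{A}$; post-multiplying by $\mc{B}$ and using the defining identity $\mc{B}\n\mc{B}^{\dg}\n\mc{B}=\mc{B}$ from Definition \ref{defmpi}(1) yields the second identity of Theorem \ref{rev1}.

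No step is technically subtle; the only care required is to match each Hermiticity assumption with the correct post-multiplication (by $\mc{A}^*$ or by $\mc{B}$) and to invoke Lemma \ref{revA}(a) or the Moore-Penrose relation $\mc{B}\n\mc{B}^{\dg}\n\mc{B}=\mc{B}$ to eliminate the extra factor. The most delicate bookkeeping is recognising at each stage which factor is Hermitian so that the sandwich argument $PXP$ works; once that pattern is pinned down, both directions fall out routinely from Theorem \ref{rev1}.
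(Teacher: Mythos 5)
Your proof is correct and follows essentially the same route as the paper's: both directions reduce to Theorem \ref{rev1}, with Hermiticity extracted via the $P\n X\n P$ sandwich after post-multiplying the two identities by $(\mc{A}^*)^{\dg}$ (resp.\ by $\mc{B}^{\dg}$), and recovered in the converse by post-multiplying the commuted forms by $\mc{A}^*$ (resp.\ by $\mc{B}$) using $\mc{A}^{\dg}\n\mc{A}\n\mc{A}^*=\mc{A}^*$ and $\mc{B}\n\mc{B}^{\dg}\n\mc{B}=\mc{B}$. One small slip: for the second identity the clean operation is post-multiplication of $\mc{B}\n\mc{B}^{\dg}\n\mc{A}^*\n\mc{A}\n\mc{B}=\mc{A}^*\n\mc{A}\n\mc{B}$ by $\mc{B}^{\dg}$ (as the paper does); literal pre-multiplication by $(\mc{B}^{\dg})^*$ does not type-check dimensionally, though your ``dualising'' parenthetical makes clear you intend the correct argument.
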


\begin{proof}
Since $\mc{A}^{\dg} \n \mc{A} \n \mc{B} \n \mc{B}^*$ is Hermitian,
we have $ \mc{A}^{\dg} \n \mc{A} \n \mc{B} \n \mc{B}^*  =  \mc{B} \n
\mc{B}^* \n \mc{A}^{\dg} \n \mc{A} $, which on post-multiplication
of  $\mc{A}^*$ yields,
\begin{equation}\label{eq26}
\mc{A}^{\dg} \n \mc{A} \n \mc{B} \n \mc{B}^*\n\mc{A}^* = \mc{B} \n
\mc{B}^*\n\mc{A}^*.
\end{equation}
Again, the fact that  $\mc{A}^* \n \mc{A}\n\mc{B}\n \mc{B}^{\dg}$ is
Hermitian implies  $\mc{B}\n\mc{B}^{\dg} \n \mc{A}^* \n\mc{A} =
\mc{A}^* \n\mc{A}\n\mc{B}\n\mc{B}^{\dg} $, which on post-multiplying
by $\mc{B}$ yields,
\begin{equation}\label{eq27}
\mc{B}\n\mc{B}^{\dg}\n \mc{A}^* \n \mc{A}\n\mc{B}=\mc{A}^* \n \mc{A}
\n \mc{B}.
\end{equation}
Then, by Theorem \ref{rev1}, equations \eqref{eq26} and \eqref{eq27}
imply $(\mc{A}\n\mc{B})^{\dg} =\mc{B}^{\dg} \n \mc{A}^{\dg}$.

Conversely, suppose that $(\mc{A}\n\mc{B})^{\dg} =\mc{B}^{\dg} \n
\mc{A}^{\dg}$. By Theorem \ref{rev1},  we have
\begin{equation}\label{eq28}
\mc{A}^{\dg}\n\mc{A}\n\mc{B}\n\mc{B}^*\n\mc{A}^* =
\mc{B}\n\mc{B}^*\n\mc{A}^*
\end{equation}
and
\begin{equation}\label{eq29}
\mc{B}\n\mc{B}^{\dg}\n\mc{A}^*\n\mc{A}\n\mc{B}
=\mc{A}^*\n\mc{A}\n\mc{B}.
\end{equation}
Post-multiplying equation \eqref{eq28} by $(\mc{A}^*)^{\dg}$, we
have
$\mc{A}^{\dg} \n \mc{A}\n \mc{B} \n \mc{B}^*$ is Hermitian
and post-multiplying equation \eqref{eq29} by $\mc{B}^{\dg}$, we get
$\mc{A}^*\n\mc{A}\n\mc{B}\n\mc{B}^{\dg}$ is Hermitian.
\end{proof}

The followings are straightforward consequences of the above result.

\begin{remark}
$\mc{A}^{\dg} \n \mc{A}\n\mc{B}\n\mc{B}^{\dg}$  is Hermitian is
equivalent to the fact that $\mc{A}^{\dg} \n \mc{A}$ and
$\mc{B}\n\mc{B}^*$ commutes, and
$\mc{A}^*\n\mc{A}\n\mc{B}\n\mc{B}^{\dg}$ is Hermitian is equivalent
to the fact that $\mc{A}^*\n\mc{A}$ and $\mc{B}\n\mc{B}^{\dg}$
commutes.
\end{remark}

The next result provides only one equivalent condition for the
reverse order law instead of two as in earlier results.

\begin{theorem}
Let $\mc{A}\in \mathbb{C}^{I_1\times\cdots\times I_N \times
J_1\times\cdots\times J_N }$ and $\mc{B}\in
\mathbb{C}^{J_1\times\cdots\times J_N \times I_1\times\cdots\times
I_N }$.  Then $(\mc{A} \n \mc{B})^{\dg} = \mc{B}^{\dg} \n
\mc{A}^{\dg}$ if and only if
\begin{equation}\label{eq30}
\mc{A}^{\dg} \n \mc{A} \n \mc{B} \n \mc{B}^* \n\mc{A}^* \n \mc{A} \n
\mc{B} \n \mc{B}^{\dg} = \mc{B} \n \mc{B}^* \n\mc{A}^* \n \mc{A}.
\end{equation}
\end{theorem}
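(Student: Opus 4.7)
The plan is to route both implications through the two-identity characterisation provided by Theorem \ref{rev1}, namely conditions \eqref{eq19} and \eqref{eq20}, which I will denote (I) and (II) below.

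For the forward implication, assume $(\mc{A}\n\mc{B})^{\dg} = \mc{B}^{\dg} \n \mc{A}^{\dg}$, so by Theorem \ref{rev1} both (I) and (II) hold. I would post-multiply (I) by $\mc{A}\n\mc{B}\n\mc{B}^{\dg}$ to obtain
\begin{equation*}
\mc{A}^{\dg}\n\mc{A}\n\mc{B}\n\mc{B}^*\n\mc{A}^*\n\mc{A}\n\mc{B}\n\mc{B}^{\dg} \;=\; \mc{B}\n\mc{B}^*\n\mc{A}^*\n\mc{A}\n\mc{B}\n\mc{B}^{\dg}.
\end{equation*}
Taking the conjugate transpose of (II) yields $\mc{B}^*\n\mc{A}^*\n\mc{A}\n\mc{B}\n\mc{B}^{\dg} = \mc{B}^*\n\mc{A}^*\n\mc{A}$; pre-multiplying this by $\mc{B}$ rewrites the right-hand side above as $\mc{B}\n\mc{B}^*\n\mc{A}^*\n\mc{A}$, producing \eqref{eq30}.

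For the converse, introduce the abbreviations $\mc{P}:=\mc{A}^{\dg}\n\mc{A}$, $\mc{Q}:=\mc{B}\n\mc{B}^{\dg}$, and $\mc{M}:=\mc{B}\n\mc{B}^*\n\mc{A}^*\n\mc{A}$. By the first two Moore-Penrose equations in Definition \ref{defmpi}, both $\mc{P}$ and $\mc{Q}$ are idempotent, and \eqref{eq30} reads $\mc{P}\n\mc{M}\n\mc{Q}=\mc{M}$. Pre-multiplying this identity by $\mc{P}$ and invoking $\mc{P}^2=\mc{P}$ gives $\mc{P}\n\mc{M}=\mc{P}\n\mc{M}\n\mc{Q}=\mc{M}$; analogously, post-multiplying by $\mc{Q}$ and using $\mc{Q}^2=\mc{Q}$ produces $\mc{M}\n\mc{Q}=\mc{M}$. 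From $\mc{P}\n\mc{M}=\mc{M}$, post-multiplying by $\mc{A}^{\dg}$ and applying $\mc{A}^*\n\mc{A}\n\mc{A}^{\dg}=\mc{A}^*$ from Lemma \ref{revA}(a) delivers (I). From $\mc{M}\n\mc{Q}=\mc{M}$, taking the conjugate transpose, post-multiplying by $(\mc{B}^*)^{\dg}$, and using $\mc{B}\n\mc{B}^*\n(\mc{B}^*)^{\dg}=\mc{B}$ from Lemma \ref{revA}(b) delivers (II). Theorem \ref{rev1} then supplies the reverse order law.

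I do not anticipate a serious obstacle; the whole argument hinges on recognising \eqref{eq30} as an idempotent sandwich $\mc{P}\n\mc{M}\n\mc{Q}=\mc{M}$, from which the two one-sided identities $\mc{P}\n\mc{M}=\mc{M}$ and $\mc{M}\n\mc{Q}=\mc{M}$ decouple automatically via the idempotency of $\mc{P}$ and $\mc{Q}$. The remaining steps are routine bookkeeping with the Moore-Penrose identities collected in Lemma \ref{revA}.
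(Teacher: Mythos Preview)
Your proposal is correct and follows essentially the same route as the paper: both directions are routed through Theorem~\ref{rev1}, and the converse is handled by exploiting the idempotency of $\mc{A}^{\dg}\n\mc{A}$ and $\mc{B}\n\mc{B}^{\dg}$ to peel off the two one-sided identities before post-multiplying by $\mc{A}^{\dg}$ (respectively, taking conjugate transpose and post-multiplying by $(\mc{B}^*)^{\dg}$) to recover \eqref{eq19} and \eqref{eq20}. Your explicit framing of \eqref{eq30} as an idempotent sandwich $\mc{P}\n\mc{M}\n\mc{Q}=\mc{M}$ is a cleaner way to state what the paper does implicitly when it pre- and post-multiplies by $\mc{A}^{\dg}\n\mc{A}$ and $\mc{B}\n\mc{B}^{\dg}$, but the underlying algebra is identical.
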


\begin{proof}
Pre-multiplying and post-multiplying equation \eqref{eq30} by
$\mc{A}^{\dg} \n  \mc{A}$ and $\mc{A}^{\dg}$, respectively, we get
\begin{equation}\label{eq31}
\mc{A}^{\dg} \n \mc{A} \n \mc{B} \n \mc{B}^*\n\mc{A}^* = \mc{B} \n
\mc{B}^*\n\mc{A}^*.
\end{equation}
Again, pre-multiplying and post-multiplying equation \eqref{eq30} by
$\mc{B}^{\dg}$ and $\mc{B} \n \mc{B}^{\dg}$, respectively, we have
\begin{equation}\label{eq32}
\mc{B}\n\mc{B}^{\dg}\n \mc{A}^* \n \mc{A}\n\mc{B}=\mc{A}^* \n \mc{A}
\n \mc{B}.
\end{equation}
Then,  equations \eqref{eq31} and \eqref{eq32} together imply
$(\mc{A}\n\mc{B})^{\dg} = \mc{B}^{\dg} \n\mc{A}^{\dg}$ by Theorem
\ref{rev1}.

Conversely, the fact $(\mc{A}\n\mc{B})^{\dg} = \mc{B}^{\dg}
\n\mc{A}^{\dg}$  is equivalent to  equations \eqref{eq19} and
\eqref{eq20}, by Theorem \ref{rev1}. Post-multiplying equation
\eqref{eq19} by $\mc{A}$ and using equation \eqref{eq20}, we obtain
$$\mc{A}^{\dg} \n \mc{A} \n \mc{B} \n \mc{B}^* \n\mc{A}^* \n \mc{A} \n \mc{B} \n \mc{B}^{\dg} = \mc{B} \n \mc{B}^* \n\mc{A}^* \n \mc{A}.$$
\end{proof}

We next present another characterization of the reverse order law.

\begin{theorem}\label{theorem4}
Let  $\mc{A}\in \mathbb{C}^{I_1\times\cdots\times I_N \times
J_1\times\cdots\times J_N }$ and $\mc{B}\in
\mathbb{C}^{J_1\times\cdots\times J_N \times I_1\times\cdots\times
I_N }$.  Then $(\mc{A}\n\mc{B})^{\dg} =\mc{B}^{\dg} \n \mc{A}^{\dg}$
if and only if both the equations
\begin{equation}\label{eq35}
\mc{A}^{\dg} \n \mc{A} \n \mc{B} = \mc{B}\n(\mc{A}\n\mc{B})^{\dg} \n
\mc{A} \n \mc{B}
\end{equation}
and
\begin{equation}\label{eq36}
\mc{B} \n \mc{B}^{\dg} \n \mc{A}^* = \mc{A}^* \n\mc{A}\n\mc{B} \n
(\mc{A} \n \mc{B})^{\dg}
\end{equation}
are satisfied.
\end{theorem}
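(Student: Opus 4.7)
The plan is to establish the equivalence by routing through Theorem \ref{rev1} and Theorem \ref{Th2}, handling the converse direction by simple post-multiplications and the forward direction by exploiting the Hermiticity conditions of Theorem \ref{Th2}.

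For the converse, assume equations \eqref{eq35} and \eqref{eq36}; the plan is to recover equations \eqref{eq19} and \eqref{eq20} of Theorem \ref{rev1}. First, post-multiply \eqref{eq35} by $\mc{B}^*\n\mc{A}^*$: the right-hand side becomes $\mc{B}\n(\mc{A}\n\mc{B})^{\dg}\n(\mc{A}\n\mc{B})\n(\mc{A}\n\mc{B})^*$, and Lemma \ref{revA}(a) applied to $\mc{A}\n\mc{B}$ collapses the last three factors to $(\mc{A}\n\mc{B})^*=\mc{B}^*\n\mc{A}^*$, yielding $\mc{A}^{\dg}\n\mc{A}\n\mc{B}\n\mc{B}^*\n\mc{A}^*=\mc{B}\n\mc{B}^*\n\mc{A}^*$, i.e. \eqref{eq19}. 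Symmetrically, post-multiplying \eqref{eq36} by $\mc{A}\n\mc{B}$ and using the first Moore--Penrose axiom $\mc{A}\n\mc{B}\n(\mc{A}\n\mc{B})^{\dg}\n\mc{A}\n\mc{B}=\mc{A}\n\mc{B}$ gives \eqref{eq20}. Theorem \ref{rev1} then closes the converse.

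For the forward direction, assume $(\mc{A}\n\mc{B})^{\dg}=\mc{B}^{\dg}\n\mc{A}^{\dg}$; substitute this on the right-hand sides of \eqref{eq35} and \eqref{eq36} and exploit the Hermiticity conditions given by Theorem \ref{Th2}. The proof of \eqref{eq36} is direct: the right-hand side becomes $\mc{A}^*\n\mc{A}\n\mc{B}\n\mc{B}^{\dg}\n\mc{A}^{\dg}$; since Theorem \ref{Th2} yields that $\mc{A}^*\n\mc{A}\n\mc{B}\n\mc{B}^{\dg}$ is Hermitian, it equals $\mc{B}\n\mc{B}^{\dg}\n\mc{A}^*\n\mc{A}$, and then Lemma \ref{revA}(a) (in the form $\mc{A}^*\n\mc{A}\n\mc{A}^{\dg}=\mc{A}^*$) turns the expression into $\mc{B}\n\mc{B}^{\dg}\n\mc{A}^*$, matching the left-hand side.

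The proof of \eqref{eq35} is the main obstacle; after substitution, one must show $\mc{A}^{\dg}\n\mc{A}\n\mc{B}=\mc{B}\n\mc{B}^{\dg}\n\mc{A}^{\dg}\n\mc{A}\n\mc{B}$, and here the $\mc{A}^{\dg}$ that helped us in \eqref{eq36} is missing on the right. The plan is to invoke the other Hermiticity condition of Theorem \ref{Th2}, namely $\mc{A}^{\dg}\n\mc{A}\n\mc{B}\n\mc{B}^*=\mc{B}\n\mc{B}^*\n\mc{A}^{\dg}\n\mc{A}$, then pre-multiply by $\mc{B}\n\mc{B}^{\dg}$ and use $\mc{B}\n\mc{B}^{\dg}\n\mc{B}=\mc{B}$ to obtain
\begin{equation*}
\mc{B}\n\mc{B}^{\dg}\n\mc{A}^{\dg}\n\mc{A}\n\mc{B}\n\mc{B}^*=\mc{B}\n\mc{B}^*\n\mc{A}^{\dg}\n\mc{A}=\mc{A}^{\dg}\n\mc{A}\n\mc{B}\n\mc{B}^*.
\end{equation*}
Setting $\mc{W}=\mc{A}^{\dg}\n\mc{A}-\mc{B}\n\mc{B}^{\dg}\n\mc{A}^{\dg}\n\mc{A}$, this reads $\mc{W}\n\mc{B}\n\mc{B}^*=\mc{O}$, so $(\mc{W}\n\mc{B})\n(\mc{W}\n\mc{B})^*=\mc{O}$, and Remark \ref{rlm1} forces $\mc{W}\n\mc{B}=\mc{O}$, i.e.\ the required identity $\mc{A}^{\dg}\n\mc{A}\n\mc{B}=\mc{B}\n\mc{B}^{\dg}\n\mc{A}^{\dg}\n\mc{A}\n\mc{B}=\mc{B}\n(\mc{A}\n\mc{B})^{\dg}\n\mc{A}\n\mc{B}$. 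The harder piece is precisely the "stripping of the $\mc{B}^*$ tail," and the quadratic-vanishing route via Remark \ref{rlm1} is the cleanest path I see, avoiding any appeal to range or projection arguments that would require additional justification in the tensor setting.
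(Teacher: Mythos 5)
Your proposal is correct, and your converse direction coincides exactly with the paper's: post-multiply \eqref{eq35} by $(\mc{A}\n\mc{B})^*$ and \eqref{eq36} by $\mc{A}\n\mc{B}$, then invoke Theorem \ref{rev1}. Where you diverge is the forward direction. The paper never substitutes $\mc{B}^{\dg}\n\mc{A}^{\dg}$ for $(\mc{A}\n\mc{B})^{\dg}$; it works directly from the two identities \eqref{eq19} and \eqref{eq20} supplied by Theorem \ref{rev1}, obtaining \eqref{eq35} by post-multiplying \eqref{eq19} with $((\mc{A}\n\mc{B})^{\dg})^*$ (so that $(\mc{A}\n\mc{B})^*\n((\mc{A}\n\mc{B})^{\dg})^*$ collapses to $(\mc{A}\n\mc{B})^{\dg}\n\mc{A}\n\mc{B}$ and the left side simplifies via $\mc{A}\n\mc{B}\n(\mc{A}\n\mc{B})^{\dg}\n\mc{A}\n\mc{B}=\mc{A}\n\mc{B}$), and \eqref{eq36} by post-multiplying the conjugate-transposed form of \eqref{eq20} with $(\mc{A}\n\mc{B})^{\dg}$. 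That route is one multiplication per identity and needs no cancellation lemma. Your route instead substitutes the hypothesis into the right-hand sides and leans on the Hermiticity conditions of Theorem \ref{Th2}; for \eqref{eq36} this is immediate, but for \eqref{eq35} you must strip a trailing $\mc{B}^*$, which you do correctly via the Gram-type argument $\mc{W}\n\mc{B}\n\mc{B}^*=\mc{O}\Rightarrow(\mc{W}\n\mc{B})\n(\mc{W}\n\mc{B})^*=\mc{O}\Rightarrow\mc{W}\n\mc{B}=\mc{O}$ using Remark \ref{rlm1}. (A marginally shorter alternative at that point: post-multiply $\mc{W}\n\mc{B}\n\mc{B}^*=\mc{O}$ by $(\mc{B}^*)^{\dg}$ and use Lemma \ref{revA}(b).) Both proofs are valid; the paper's forward direction is the more economical, while yours makes explicit the link between this characterization and the Hermiticity characterization of Theorem \ref{Th2}.
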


\begin{proof}
By Theorem \ref{rev1}, equations \eqref{eq19} and \eqref{eq20} hold
true. Now, post-multiplying $((\mc{A}\n\mc{B})^{\dg})^*$ to equation
\eqref{eq19}, we have $\mc{A}^{\dg} \n \mc{A} \n \mc{B}\n
(\mc{A}\n\mc{B})^* \n ((\mc{A}\n\mc{B})^{\dg})^*=
\mc{B}\n(\mc{A}\n\mc{B})^*\n ((\mc{A}\n\mc{B})^{\dg})^*$ which
yields $\mc{A}^{\dg} \n  \mc{A}\n\mc{B} \n (\mc{A}\n\mc{B})^{\dg} \n
\mc{A}\n\mc{B}= \mc{B}\n (\mc{A}\n\mc{B})^{\dg}\n \mc{A}\n\mc{B}.$
Post-multiplying $(\mc{A}\n\mc{B})^{\dg}$ to $(\mc{B} \n
\mc{B}^{\dg})^* \n \mc{A}^*\n \mc{A}\n\mc{B}= \mc{A}^* \n
\mc{A}\n\mc{B}$,  which follows from \eqref{eq20}, we obtain
\begin{eqnarray*}
\mc{A}^*\n\mc{A}\n\mc{B}\n (\mc{A}\n\mc{B})^{\dg} & = & (\mc{B}^{\dg})^*\n(\mc{A}\n\mc{B})^*\n\mc{A}\n\mc{B}\n(\mc{A}\n\mc{B})^{\dg} \\
& = & (\mc{B}^{\dg})^*\n (\mc{A}\n\mc{B})^*\\
& = & (\mc{B}^{\dg})^*\n \mc{B}^*\n\mc{A}^*\\
& = & \mc{B}\n \mc{B}^{\dg}\n\mc{A}^*.
\end{eqnarray*}

Conversely, post-multiplying $(\mc{A}\n\mc{B})^*$ to equation
\eqref{eq35}, we have
\begin{eqnarray*}
\mc{A}^{\dg} \n \mc{A} \n \mc{B} \n \mc{B}^*\n\mc{A}^*& = &\mc{B}\n (\mc{A}\n\mc{B})^{\dg} \n  \mc{A} \n \mc{B}\n (\mc{A}\n\mc{B})^*\\
& = &\mc{B}\n (\mc{A}\n\mc{B})^*\\
& = &\mc{B}\n \mc{B}^*\n\mc{A}^*.
\end{eqnarray*}
Now, post-multiplying $\mc{A}\n\mc{B}$ to equation \eqref{eq36}, we
obtain
$$
\mc{B} \n  \mc{B}^{\dg} \n \mc{A}^*\n \mc{A}\n\mc{B} =
 \mc{A}^* \n \mc{A}\n\mc{B} \n  (\mc{A} \n \mc{B})^{\dg} \n \mc{A}\n\mc{B}=
 \mc{A}^* \n \mc{A}\n\mc{B}.$$
Hence,  $(\mc{A}\n\mc{B})^{\dg} =\mc{B}^{\dg} \n \mc{A}^{\dg}$ by
Theorem \ref{rev1}.
\end{proof}

The last result of this paper provides a sufficient condition for
the reverse order law.

\begin{theorem}
Let  $\mc{A}\in \mathbb{C}^{I_1\times\cdots\times I_N \times
J_1\times\cdots\times J_N }$ and $\mc{B}\in
\mathbb{C}^{J_1\times\cdots\times J_N \times I_1\times\cdots\times
I_N }$. If $(\mc{A} \n \mc{B})^{\dg} = \mc{B}^{\dg} \n \mc{A}^{\dg}
$, then  $\mc{A}^{\dg} \n \mc{A} $ and $\mc{B} \n \mc{B}^{\dg} $
commute.
\end{theorem}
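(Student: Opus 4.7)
My plan is to reduce the commutativity $\mc{A}^{\dg}\n\mc{A}\n\mc{B}\n\mc{B}^{\dg}=\mc{B}\n\mc{B}^{\dg}\n\mc{A}^{\dg}\n\mc{A}$ to a Hermiticity statement. Both $\mc{A}^{\dg}\n\mc{A}$ and $\mc{B}\n\mc{B}^{\dg}$ are Hermitian by conditions (4) and (3) of Definition \ref{defmpi}, so $(\mc{A}^{\dg}\n\mc{A}\n\mc{B}\n\mc{B}^{\dg})^{*}=(\mc{B}\n\mc{B}^{\dg})^{*}\n(\mc{A}^{\dg}\n\mc{A})^{*}=\mc{B}\n\mc{B}^{\dg}\n\mc{A}^{\dg}\n\mc{A}$, and the desired commutativity is equivalent to the self-adjointness of the product $\mc{A}^{\dg}\n\mc{A}\n\mc{B}\n\mc{B}^{\dg}$. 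I will aim at this Hermiticity reformulation.

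To this end I plan to invoke Theorem \ref{theorem4}, specifically equation \eqref{eq35}, rather than Theorem \ref{rev1} or Theorem \ref{Th2}, because \eqref{eq35} already puts $\mc{B}\n\mc{B}^{\dg}$ into direct contact with $\mc{A}^{\dg}\n\mc{A}\n\mc{B}$ once the reverse order law is substituted. Since $(\mc{A}\n\mc{B})^{\dg}=\mc{B}^{\dg}\n\mc{A}^{\dg}$ by hypothesis, Theorem \ref{theorem4} applies and \eqref{eq35} together with the reverse order law gives
$$\mc{A}^{\dg}\n\mc{A}\n\mc{B}=\mc{B}\n(\mc{A}\n\mc{B})^{\dg}\n\mc{A}\n\mc{B}=\mc{B}\n\mc{B}^{\dg}\n\mc{A}^{\dg}\n\mc{A}\n\mc{B}.$$
Post-multiplying by $\mc{B}^{\dg}$ then yields
$$\mc{A}^{\dg}\n\mc{A}\n\mc{B}\n\mc{B}^{\dg}=\mc{B}\n\mc{B}^{\dg}\n\mc{A}^{\dg}\n\mc{A}\n\mc{B}\n\mc{B}^{\dg}.$$
The right-hand side has the form $\mc{P}\n\mc{Q}\n\mc{P}$ with $\mc{P}=\mc{B}\n\mc{B}^{\dg}$ and $\mc{Q}=\mc{A}^{\dg}\n\mc{A}$ both Hermitian, hence is manifestly Hermitian. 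Consequently $\mc{A}^{\dg}\n\mc{A}\n\mc{B}\n\mc{B}^{\dg}$ is Hermitian, and the reduction recorded in the first paragraph then delivers the required commutativity.

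The main obstacle is really just the choice of which characterisation of the reverse order law to start from. Working instead through Theorem \ref{Th2} would only give commutativity of $\mc{A}^{\dg}\n\mc{A}$ with $\mc{B}\n\mc{B}^{*}$ and of $\mc{A}^{*}\n\mc{A}$ with $\mc{B}\n\mc{B}^{\dg}$, and transferring either of those to commutativity of $\mc{A}^{\dg}\n\mc{A}$ with $\mc{B}\n\mc{B}^{\dg}$ appears to need a spectral or simultaneous-diagonalisation step that is not readily available in this tensor setting. Equation \eqref{eq35} circumvents the difficulty completely, so the substantive computation reduces to a single post-multiplication followed by a conjugation.
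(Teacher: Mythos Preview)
Your proof is correct and follows essentially the same route as the paper: both start from equation \eqref{eq35} of Theorem \ref{theorem4}, substitute $(\mc{A}\n\mc{B})^{\dg}=\mc{B}^{\dg}\n\mc{A}^{\dg}$, post-multiply by $\mc{B}^{\dg}$, and then observe that the resulting right-hand side $\mc{B}\n\mc{B}^{\dg}\n\mc{A}^{\dg}\n\mc{A}\n\mc{B}\n\mc{B}^{\dg}$ is Hermitian, which forces the commutativity. The only cosmetic difference is that the paper phrases the last step as ``take the conjugate transpose of both sides'' while you frame it as a Hermiticity reduction via the $\mc{P}\n\mc{Q}\n\mc{P}$ structure.
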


\begin{proof}
By Theorem \ref{theorem4},  we have $\mc{A}^{\dg} \n \mc{A} \n
\mc{B} = \mc{B}\n\mc{B}^{\dg} \n\mc{A}^{\dg} \n \mc{A} \n \mc{B}.$
Post-multiplying $\mc{B}^{\dg}$ and taking  conjugate transpose in
both sides,  we get
$$(\mc{A}^{\dg} \n  \mc{A} \n \mc{B}\n \mc{B}^{\dg})^*= (\mc{B}\n\mc{B}^{\dg} \n\mc{A}^{\dg} \n \mc{A} \n \mc{B} \n \mc{B}^{\dg}),$$ i.e.,
 $\mc{B}\n \mc{B}^{\dg} \n \mc{A}^{\dg} \n \mc{A} = \mc{B}\n\mc{B}^{\dg} \n\mc{A}^{\dg} \n \mc{A} \n \mc{B} \n \mc{B}^{\dg}$.
So $\mc{A}^{\dg} \n  \mc{A} $ and $\mc{B} \n  \mc{B}^{\dg} $
commute.

\end{proof}

Converse of the above theorem is  not true, and is shown by the
following example.

\begin{example}
Consider tensors $\mc{A} = (a_{ij})_{1 \leq i,j \leq 2}  \in
\mathbb{R}^{ 2\times 2}$  and  $\mc{B} = (b_{ijkl})_{1 \leq i,j,k,l
\leq 2}  \in \mathbb{R}^{2\times 2\times 2\times 2}$ such that
    \begin{equation*}
 a_{ij} =
    \begin{pmatrix}
    1 & 1 \\
    1 & 0\\
    \end{pmatrix},
    \end{equation*}
    and
\begin{eqnarray*}
b_{ij11} =
    \begin{pmatrix}
    0 & 0\\
    0 & 1\\
    \end{pmatrix},~
b_{ij21} =
    \begin{pmatrix}
    1 & -1\\
    0 & 0\\
    \end{pmatrix},~
b_{ij12} =
    \begin{pmatrix}
    0 & 1\\
    0 & 0\\
    \end{pmatrix} ~~and~~
b_{ij22} =
    \begin{pmatrix}
    1 & 0\\
    -1 & 0\\
    \end{pmatrix}.
\end{eqnarray*}
Then  $\mc{A}^\dg= (x_{ij})_{1 \leq i,j\leq 2}  \in
\mathbb{R}^{2\times 2}$, and  $\mc{B}^\dg = (y_{ijkl})_{1 \leq
i,j,k,l \leq 2}  \in \mathbb{R}^{2\times 2\times 2\times 2}$, where
\begin{eqnarray*}
x_{ij} =
    \begin{pmatrix}
    0 & 1\\
    1 & -1\\
    \end{pmatrix},
\end{eqnarray*}

and

\begin{eqnarray*}
y_{ij11} =
    \begin{pmatrix}
    0 & 1\\
    1 & 0\\
    \end{pmatrix},~
y_{ij21} =
    \begin{pmatrix}
    0 & 1\\
    1 & -1 \\
    \end{pmatrix},~
y_{ij12} =
    \begin{pmatrix}
    0 & 1\\
    0 & 0\\
    \end{pmatrix} ~~and~~
    y_{ij22} =
    \begin{pmatrix}
    1 & 0\\
    0 & 0\\
    \end{pmatrix}.
\end{eqnarray*}
We thus have $$\mc{A}^\dg \n \mc{A} \n\mc{B}\n\mc{B}^\dg
=\mc{B}\n\mc{B}^\dg \n \mc{A}^\dg \n \mc{A}$$ as  $\mc{A}^\dg
\n\mc{A} = \mc{I}$. But $(\mc{A}\n\mc{B})^\dg =
\begin{pmatrix}
    0 & 0\\
    1 & 0\\
\end{pmatrix}$
and $\mc{B}^\dg \n \mc{A}^\dg =
\begin{pmatrix}
    -1 & 2\\
    1 & -1\\
\end{pmatrix}$.
Hence $$ (\mc{A}\n\mc{B})^\dg \neq \mc{B}^\dg \n \mc{A}^\dg.$$
\end{example}

\noindent {\small {\bf Acknowledgments.}\\
 The last author acknowledges the support provided by Science
  and Engineering Research Board, Department of Science and Technology,
  New Delhi, India, under the grant number YSS / 2015 / 000303. }

\bibliographystyle{amsplain}

\end{document}